%% file: EHMMIIIarXivv1.tex
\documentclass[12pt, oneside]{amsart}
\usepackage[normalem]{ulem}
\usepackage[utf8]{inputenc}
\usepackage{amsmath, amssymb, amsthm, enumerate,url,bibentry, color,xcolor, fullpage,placeins,hyperref}

\usepackage[all,cmtip]{xy}
\usepackage{tikz,epsfig}
\usepackage{verbatim}
\usepackage{geometry}
\usepackage{colonequals}
\usepackage{tabularray}
\usepackage{cleveref}
\usepackage[singlelinecheck=false]{caption}
\usepackage{tikz-cd}

\definecolor{purple}{rgb}{0.59, 0.44, 0.84}

\definecolor{orange}{rgb}{1, 0.6, 0.4}

\newtheorem{theorem}{Theorem}[section]
\newtheorem{lemma}[theorem]{Lemma}

\newtheorem{proposition}[theorem]{Proposition}

\newcounter{example}
\newenvironment{example}[1][]{\refstepcounter{example}\par\medskip
   \noindent \textbf{Example~\theexample. #1} \rmfamily}{\medskip} 
\newcounter{remark}
\newenvironment{remark}[1][]{\refstepcounter{remark}\par\medskip
   \noindent \textbf{Remark~\theremark. #1} \rmfamily}{\medskip}

\numberwithin{equation}{section}
\numberwithin{example}{section}
\numberwithin{remark}{section}

\def \l {\lambda}

\def\BG{\mathtt G}
\def\C{\mathbb C}
\def\G{\mathbf G}

\def\wp{\mathfrak{p}}

\def\Gfam{\mathfrak{G}}

\def\Tr{{\rm Tr}}
\def\Gal{{\rm Gal}}
\def\Frob{{\rm Fr}}
\def\F{{\mathbb F}}
\def\Z{{\mathbb Z}}

\def\G{\Gamma}

\def\SL{\mathrm{SL}}

\def\M#1#2#3#4{\begin{pmatrix}#1&#2\\#3&#4\end{pmatrix}}

\def\R{\mathbb{R}}
\def\Q{\mathbb{Q}}

\def\H{\mathbb H}
\def\M#1#2#3#4{\begin{pmatrix}#1&#2\\#3&#4\end{pmatrix}}

\def \ol{\overline}

\newcommand{\fq}{\mathbb{F}_q}

\newcommand{\fqhat}
{\widehat{\mathbb{F}_{q}^{\times}}}

\newcommand*\HYPERskip{&}
\catcode`,\active
\newcommand*\pFq{
\begingroup
\catcode`\,\active
\def ,{\HYPERskip}
\doHyper
}
\catcode`\,12
\def\doHyper#1#2#3#4#5{
\, _{#1}F_{#2}\left[\begin{matrix}#3 \smallskip \\  #4\end{matrix} \; ; \; #5\right]
\endgroup
}

\def \pfq{\pFq}

\catcode`,\active

\catcode`\,12

\crefformat{section}{\S#2#1#3}
\crefformat{subsection}{\S#2#1#3}

\catcode`,\active

\catcode`\,12

\catcode`,\active

\catcode`\,12

\def\SL{\mathrm{SL}}

\def\M#1#2#3#4{\begin{pmatrix}#1&#2\\#3&#4\end{pmatrix}}

\def \HD{\mathit{HD}}

\def\BH{\mathbf H}

\def\G{\Gamma}
\begin{document}
\title{The Explicit Hypergeometric Modularity Method III}

\address{Department of Mathematics, West Virginia University Institute of Technology, Beckley, WV 25801, USA}

\email{\href{mailto:brian.grove@mail.wvu.edu}{brian.grove@mail.wvu.edu}}

\address{Department of Mathematics, Louisiana State University, Baton Rouge, LA 70803, USA}

\email{\href{mailto:llong@lsu.edu}{llong@lsu.edu}}

\address{Department of Mathematics, Cornell University, Ithaca, NY 14853, USA}

\email{\href{mailto:esmerosen92@gmail.com}{esmerosen92@gmail.com}}

\address{Department of Mathematics, Louisiana State University, Baton Rouge, LA 70803, USA}

\email{\href{mailto:ftu@lsu.edu}{ftu@lsu.edu}}

\keywords{Hypergeometric functions, Modular forms, Galois representations, Character sums, $L$-values}

\subjclass[2020]{33C20, 11F03, 11F66, 11F80, 11T24}

\begin{abstract}
 We refine the Explicit Hypergeometric Modularity Method (EHMM) and
develop a variant that applies to a broader class of hypergeometric
data. As an application, we establish the modularity of hypergeometric
Galois representations arising from length-$4$ data that are not
necessarily defined over $\Q$. We also use this method to give explicit
constructions of nine modular forms associated with hypergeometric
rigid Calabi--Yau threefolds conjectured to be modular by
Rodriguez-Villegas. The modularity of these threefolds was first proved
by Long--Tu--Yui--Zudilin in \cite{LTYZ} using a different approach
based on the Faltings--Serre method. Moreover, the explicit nature of the method makes it well suited to
the computation of special  $L$-values of the associated modular forms.
\end{abstract}
\author{Brian Grove, Ling Long, Esme Rosen, and Fang-Ting Tu}
\maketitle

\setcounter{tocdepth}{1}
\tableofcontents

\section{Introduction}

In \cite{HMM1}, the Explicit Hypergeometric Modularity Method (EHMM) was introduced to study the modularity of certain hypergeometric Galois representations. The method combines commutative formal group laws (CFGLs) with Ramanujan’s theory of elliptic functions to alternative bases (REAB), which provides an explicit bridge between certain hypergeometric functions and modular forms.

The Langlands program predicts the modularity of motivic $G_\Q:=\Gal(\ol \Q/\Q)$-representations. In \cite{LLL26}, potential automorphy has been established for irreducible hypergeometric Galois representations of $G_\Q$ with distinct Hodge–Tate weights, as well as in several low-dimensional hypergeometric cases. However, determining the exact automorphic correspondence remains challenging. Traditional approaches, such as the Faltings–Serre method, often rely on searches in existing databases, while methods based on the Eichler–Selberg trace formula, as in \cite{Ahlgren-Ono-CalabiYau, lennon1, McCarthy-Papanikolas}, tend to be 
case-dependent. By design, the EHMM provides a general algorithm that applies uniformly to a broad range of hypergeometric data and computes the corresponding Hecke eigenforms explicitly.

 Let  $\HD$ or simply $\varkappa$  be an arithmetic hypergeometric datum consisting of
\[
\left\{
\alpha=\{a_1,\ldots,a_n\},
\quad
\beta=\{b_1:=1,\ldots,b_n\}
\right\}
\]
i.e. a pair of rational multisets with equal length. Let $M$ be the least common denominator of all $a_i$ and $b_j$. For two  hypergeometric data $\varkappa_1=\{\alpha_1,\beta_1\}$ and $\varkappa_2=\{\alpha_2,\beta_2\}$, we define 
$$
\varkappa_1\star\varkappa_2=\{\alpha_1\cup \alpha_2,\beta_1\cup \beta_2\}. 
$$ 

The main objective of the EHMM is to explicitly construct a Hecke eigenform or automorphic form $f_{\HD}^{\sharp}$ associated with the representation $\eta_{\HD, \ell,1}$ of the Galois group $\operatorname{Gal}\bigl(\overline{\mathbb{Q}}/\mathbb{Q}(\zeta_M)\bigr)$, arising from $\HD$ at the parameter $t=1$, whose Frobenius traces are expressed in terms of the finite field hypergeometric functions $\BH(\alpha,\beta;1;-)$. Here we follow the notation of \cite{LLL26}. The general philosophy of the EHMM is to decompose the datum as
\[
\HD=\HD^{a}\star HD^\flat,
\]
for data $\HD^{a}$ and $\HD^\flat$, to achieve the following goals. 
The datum $\HD^{\flat}$ typically carries a REAB background and determines an underlying modular form $f$, possibly requiring further refinement, whereas the augmenting datum $\HD^{a}$ is used to adjust either the weight of $f$ or its vanishing behavior at the cusps. The desired outcome is a holomorphic cusp form $f_{\HD}$. Since the publication of the first paper in the series, \cite{HMM1}, the EHMM has been applied in several subsequent works, including \cite{HMM2,Barman_MaityEHMM,grove2025hypergeometricmodularityconjecturesdawsey,rosen, rosenK1}. We have also implemented an EHMM calculator, available at \cite{EHMMcalc}. 

In \cite{HMM1,Barman_MaityEHMM,  grove2025hypergeometricmodularityconjecturesdawsey,rosen,  rosenK1}, one takes the length of $HD$ less than $5$ and considers the Euler integral representative  of  $F(HD,1)=\pFq{n}{n-1}{a_{1}&a_{2}&\ldots&a_{n}}{&b_{2}& \ldots &b_{n}}1$. That is, 
$ \HD^\flat=\{\{a_1,\cdots, a_{n-1}\},\{1,b_2,\cdots, b_{n-1}\}\}$ and    $\HD^{a}=\bigl\{\{a_n\},\{b_n\}\bigr\}$. 
Once the modular form $f_{HD}$ has been constructed, one checks whether 
the hypergeometric datum satisfies the regularity condition, such as that of 
\cite{HMM1,rosen}. When this condition holds, the EHMM in \cite{HMM1} describes 
$\eta_{HD,\ell,1}$ through a system of congruences. Truncated hypergeometric series play a central role in this argument. 
They connect
$\BH(\alpha,\beta;1;\mathbb{F}_p)$
with the $p$-th Fourier coefficient $a_p(f_{\HD})$
of $f_{\HD}$. Sufficiently strong congruences, combined with Weil bounds, 
allow these congruences to be upgraded to equalities.

The principal goal of this paper is to demonstrate a variant of the EHMM based on the residue theorem,
\begin{equation}\label{eq:star-at1}
    F(HD^a\star HD^\flat,1) = \frac 1{2\pi i} \oint_{|t|=1} F(HD^a,1/t)F(HD^\flat,t)\frac{dt}{t}.
\end{equation}
\begin{quote}
   \emph{ 
   By this approach, we develop  a variant of EHMM which is applicable to $\HD^a$ of length larger than 1.}
\end{quote}   Another feature is
the extension of the EHMM to primes that are not necessarily congruent
to $1$ modulo $M$, which was required in \cite{HMM1}.

As an illustration, for integers $d,e>1$, we consider the 
hypergeometric data
\begin{equation}\label{eq:keydata}
    \varkappa(d,e):=\left\{\alpha=\left\{\tfrac1d,\tfrac{d-1}d,\tfrac1e,\tfrac{e-1}e\right\},\beta=\{1,1,1,1\}\right\}
\end{equation}
and use the decomposition  $\varkappa(d,e):=HD^a\star HD^\flat$ where 
\begin{equation}\label{eq:partition}
   HD^\flat=\left\{\left\{\tfrac12,\tfrac1d,\tfrac{d-1}d\right\},\{1,1,1\}\right\}\quad \text{and} \quad HD^a=\left\{\left\{\tfrac1e,\tfrac{e-1}e\right\},\left\{1,\tfrac12\right\}\right\}.
\end{equation} 

\begin{theorem}\label{thm:H-version}
Let $d\in\{2,3,4\},$
and define $M_2=8,\, M_3=12,\, M_4=24,$
and $C_1(2)=-64,\, C_1(3)=108,\, C_1(4)=256.$
For each positive divisor $e$ of $M_d$,   there exists a vector space $\mathfrak G_{d,e}$ of weight-$4$
cusp forms constructed using the EHMM, of dimension equal to
Euler's totient of $e$, such that for every normalized Hecke eigenform $g_{d,e}\in\mathfrak G_{d,e}$ and every prime
$
p\equiv 1\pmod{\text{lcm}(d,e)}
$
 coprime to $a_p(g_{d,e})$, one has
\[
\BH\left(
\varkappa(d,e);1;\wp
\right)
=
\varepsilon_{d,e}(p)  a_p(g_{d,e})
+
(-1)^{\frac{p-1}{d}+\frac{p-1}{e}}p,
\]
where  $a_n(g)$  denotes the $n$th Fourier coefficient of $g$,  and $\varepsilon_{d,e}(p)\in\{\pm1\}$ is  characterized by the congruence
\[
\varepsilon_{d,e}(p)\equiv\left(-\frac{C_1(d)}{4}\right)^{\frac{1-p}{e}}
\pmod p.
\]
\end{theorem}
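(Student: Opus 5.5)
The plan is to run the residue-theorem variant \eqref{eq:star-at1} of the EHMM for the decomposition \eqref{eq:partition}, and to match it on the finite-field side by the analogous convolution of $\BH$-functions over $\wp$.

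\textbf{Step 1: the modular side from $\HD^\flat$.} For $d\in\{2,3,4\}$ the datum $\HD^\flat$ is one of Ramanujan's theories of signature $d$ (Clausen). Concretely, $F(\HD^\flat,t)$ with $t=C_1(d)/ \text{(Hauptmodul)}$ equals a weight-$2$ Eisenstein-type form $\theta_d(\tau)$ on the genus-zero group $\Gamma_0(N_d)^+$, with $N_2=4$ (or $2$), $N_3=3$, $N_4=2$. The constant $C_1(d)$ is the normalization of that Hauptmodul $t(\tau)$. On the circle $|t|=1$ I expand $F(\HD^a,1/t)=\pFq21{\frac1e,\frac{e-1}e}{\frac12}{1/t}$. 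Using the classical quadratic/Chebyshev evaluation $\pFq21{\frac1e,-\frac1e+1}{\frac12}{\sin^2 x}=\cos((1-\frac2e)x)/\cos x$ type formulas, this becomes an algebraic function of an $e$-th root of (a function of) $t$. Pulling back along $t(\tau)$ turns the integrand of \eqref{eq:star-at1} into $\theta_d(\tau)\,h_e(\tau)\,dt/t$. Here $h_e$ is a modular function for a finite-index subgroup obtained by adjoining $e$-th roots of $t$ or of $1-t/C_1(d)$, i.e. a cyclic degree-$e$ cover. Since $dt/t$ contributes another weight-2 form, the product is a weight-$4$ form. I then decompose it under the $\Z/e$ deck action into $\chi$-isotypic pieces. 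The pieces with $\chi$ primitive of order $e$ span $\mathfrak G_{d,e}$, which has dimension $\varphi(e)$. Cuspidality comes from the vanishing at cusps forced by the $\frac12$ in $\beta$ of $\HD^a$, which plays the role of the augmenting datum. The level divides a power of $M_d$.

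\textbf{Step 2: the finite-field side.} I use the multiplicativity of Greene/Jacobi sums to write
\[
\BH(\varkappa(d,e);1;\wp)=\sum_{t}\BH(\HD^a;1/t;\wp)\,\BH(\HD^\flat;t;\wp)
\]
up to an explicit normalization. Then I substitute the finite-field version of the Chebyshev formula for $\BH(\HD^a)$, which is a sum over characters $\omega^{k(p-1)/e}$ of $\psi$-values. This gives
\[
\BH(\varkappa(d,e))=\sum_{\chi^e=1}c_\chi\sum_t \chi(\cdots)\,\BH(\HD^\flat;t),
\]
the twisted point counts of the cyclic covers of the modular curve from Step 1. The trivial and quadratic $\chi$ produce the term $(-1)^{\frac{p-1}d+\frac{p-1}e}p$, via known evaluations of $\BH(\HD^\flat)$ and Gauss sums. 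The primitive $\chi$ give the Hecke traces of $\mathfrak G_{d,e}$ via Eichler–Shimura/Scholl for the corresponding elliptic-modular threefold.

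\textbf{Step 3: pinning down the sign.} To identify the contribution with $\varepsilon_{d,e}(p)a_p(g_{d,e})$, I compare mod $p$ using truncated series. The truncation $F(\varkappa(d,e),1)_{p-1}$ is congruent mod $p$ to $\BH$ by the Dwork/Greene reduction. Via the residue computation, the truncation also equals the unit-root coefficient of $\theta_d h_e\,dt/t$, which is congruent to $a_p(g)$ times the character value $(-C_1(d)/4)^{(1-p)/e}$. The factor $(-C_1(d)/4)^{(1-p)/e}$ arises from normalizing $1/t$ against the Hauptmodul at the $e$-th root, and it is $\equiv\pm1$. Because $p\nmid a_p(g)$ and $a_p$ is the unit root, this congruence determines $\varepsilon$. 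Weil bounds, $|\BH-(\pm p)|\le 2p^{3/2}$ against the possible lifts, then upgrade the congruence to an equality. To get enough precision I may need a mod $p^2$ supercongruence, which I would prove via the $\varphi(e)$-dimensional Galois decomposition rather than pure congruences.

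The main obstacle is Step 3, especially checking regularity at these primes and controlling the sign and character twist precisely. Proving that the $\chi$-isotypic weight-4 forms are Hecke eigen-spaces whose traces match the primitive part of the character sum exactly, rather than up to twist, is the delicate point. If it fails, I would fall back on comparing explicit Fourier expansions with computed $\BH$ values at small primes, combined with a Sturm bound.
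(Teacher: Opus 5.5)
Your Step 1 is essentially the paper's construction via Zagier's trick. One detail: the cyclic cover actually needed is the $e$-th root of $z_d/(1-z_d)$ on $\Gamma_1(6-d)$, where $t_d=4z_d(1-z_d)$. The real problem is that the parts that would prove the identity are missing. The decisive congruence $F(\varkappa(d,e),1)_{p-1}\equiv a_p(g_{d,e})(-C_1(d)/4)^{(1-p)/e}\pmod p$ is asserted ``via the residue computation''. But \eqref{eq:star-at1} is an identity over $\C$ and yields no congruence. What you need is its $p$-adic counterpart, which the paper supplies as follows:
\begin{enumerate}
\item Write $\BG_d(j/e)\frac{dq}{q}=C_1(d)^{-j/e}t^{j/e}F(\varkappa_{cont}(j/e),t)F(\varkappa_3(d),t)\frac{dt}{t}$ with $t=t_d$, and expand $g_{d,e}\frac{dq}{q}=\sum c_nu^n\frac{du}{u}$ in $u=(t/C_1(d))^{1/e}$.
\item Use the Stienstra--Beukers formal group theorem (Theorem~\ref{sb}) to transport the Hecke relation $a_{jp}\equiv a_pa_j$ to $c_{jp}\equiv\beta_{d,e,j}^{1-p}a_p\sigma(c_j)\pmod{pR}$.
\item Compute $c_{jp}$ directly. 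By Lemma~\ref{lem:HG-product} it is, up to $\beta_{d,e,j}C_1(d)^{-j/e}$, the product $A_j(m)B_j(m)$ at $m=[-j/e]_0$. The terminating ${}_5F_4$ value $B_j(m)$ reduces mod $p$ to the truncated ${}_4F_3$ of $j\cdot\varkappa(d,e)$, and $A_j(m)\equiv(-1/4)^m$ by a $p$-adic Gamma computation.
\end{enumerate}
This is where $-C_1(d)/4$ genuinely comes from: the power of $C_1(d)$ from normalizing $u$, and $(-1/4)^m$ from $A_j$. Without such a chain, your Step 3 has no content.

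Even granting that congruence, it holds only modulo $p$. Both $\BH(\varkappa(d,e);1;\wp)-(\pm p)$ and $a_p(g_{d,e})$ can be as large as $2p^{3/2}$, so upgrading to an equality via Weil bounds needs a congruence modulo $p^2$. The paper obtains this from the supercongruence \cite[Theorem 2.3]{HMM1}, which applies because all $b_i=1$: $\BH(c\cdot\varkappa(d,e);1;\F_p)-(-1)^{\frac{p-1}d+\frac{p-1}e}p\equiv F(c\cdot\varkappa(d,e),1)_{p-1}\equiv\mu_{c\cdot\varkappa(d,e),p,1}\pmod{p^2}$. This is also where the $\pm p$ term and its sign come from (the $\Gamma_p$ reflection formula), not from ``trivial and quadratic $\chi$''. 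It is combined with identifying Dwork's unit root with $\varepsilon_{d,e}(p)$ times the unit root of $T^2-a_pT+\xi_{d,e}(p)p^3$, which is $\equiv a_p\pmod{p^3}$. Saying you would get $p^2$ precision ``via the $\varphi(e)$-dimensional Galois decomposition'' is not an argument.

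You also do not address integrality. For $e\in\{8,12,24\}$ the datum is not defined over $\Q$, and $\BH$ a priori lies only in $\Q(\zeta_e+\zeta_e^{-1})$. A congruence at one embedding is then insufficient. The paper proves the congruence for every conjugate datum $c\cdot\varkappa(d,e)$ (part (2) of Proposition~\ref{prop:key-modp}) and averages over conjugates to kill the irrational parts.

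Your Step 2 would be a genuinely different route. If the finite-field convolution and an Eichler--Shimura/Scholl identification of the $\chi$-twisted parabolic cohomology of $\mathrm{Sym}^2$ with $\Gfam_{d,e}$ were proved exactly, a mod-$p$ sign check would suffice. But none of the ingredients is proved: the normalization and boundary terms of the convolution, a finite-field Chebyshev formula for $\BH(\HD^a)$, and the identification including the twist. You concede the last of these yourself. Note also that for fixed $\wp$ only $\iota_\wp(\pm1/e)$ enters. The output is therefore $\varepsilon_{d,e}(p)a_p$ of a single eigenform, not a Hecke trace over the space; for $p\equiv1\pmod e$ all eigenforms in $\Gfam_{d,e}$ share $a_p$ anyway.

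Finally, the Sturm-bound fallback is unavailable. The sequence $p\mapsto\BH(\varkappa(d,e);1;\wp)$ is not known a priori to be the coefficient sequence of a modular form, so a finite check would require a Faltings--Serre argument as in \cite{LTYZ}.
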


\smallskip

Throughout this paper, for $c\in\mathbb Q^\times$, we use $\chi_c$ to denote the  Dirichlet
character corresponding to the character of $G_\Q$
whose kernel fixes $\mathbb Q(\sqrt c)$.

As an application, after extending the method to include additional primes,
we explicitly obtain nine target modular forms associated with
hypergeometric rigid Calabi--Yau threefolds
\cite{gugiatti2024hypergeometriclocalsystemsmathbbq,LTYZ,RV-conj}.

\begin{theorem}\label{thm: Galois version}
Let $d=2$, $3$, $4$, and $M_d=8$, $12$, $24$, respectively. Assume that
$e\in\{2,3,4,6\}$ and that $e\mid M_d$. Let
$\eta_{\varkappa(d,e),\ell,1}^{\mathrm{BCM}}$ denote the
semisimplification of the hypergeometric representation of
$G_{\Q}$ associated with
$\varkappa(d,e)$ as defined in \cite{LLL26}. 

Let $g_{d,e}$
be the weight-$4$ Hecke eigenform listed in
Tables~\ref{G2table}, \ref{G3table}, and \ref{G4table}.
When $d=2$, let $f_{2,e}^\sharp:=g_{2,e}$;  and when $d=3$ or $4$,
$f_{d,e}^{\sharp}:=\chi_{-d}\otimes g_{d,e}$. 
Then
\[
\eta_{\varkappa(d,e),\ell,1}^{\mathrm{BCM}}
\cong
\rho_{f_{d,e}^{\sharp}}
\oplus
\left(
\varsigma(d)\varsigma(e)
\otimes\epsilon_{\ell}
\right),
\]
where $\rho_{f_{d,e}^{\sharp}}$ denotes the representation of $G_{\Q}$
attached to $f_{d,e}^{\sharp}$, and $\epsilon_{\ell}$ denotes the
$\ell$-adic cyclotomic character. For $e=2,3,4,6$, $\varsigma(e)$ is
$\chi_{-1},\chi_{-3},\chi_{-2},$ and $\chi_{-1}$, respectively. 
\end{theorem}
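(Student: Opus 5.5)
The plan is to derive the isomorphism from trace comparisons. We first identify Frobenius traces at a density-one set of primes, then pass to semisimplifications via Chebotarev and Brauer--Nesbitt.

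\textbf{Step 1: the split primes.} By \cite{LLL26}, for every prime $p\nmid M_d\ell$ the trace of $\mathrm{Frob}_p$ on $\eta^{\mathrm{BCM}}_{\varkappa(d,e),\ell,1}$ is given by the finite field hypergeometric value $\BH(\varkappa(d,e);1;\mathbb F_p)$. The datum $\varkappa(d,e)$ is defined over $\Q$, since $\alpha$ is stable under $x\mapsto -x$, so this makes sense for all such $p$, not only $p\equiv1 \pmod M$. For $p\equiv 1\pmod{\mathrm{lcm}(d,e)}$ we then invoke Theorem~\ref{thm:H-version}, which gives
\[
\BH(\varkappa(d,e);1;\mathbb F_p)=\varepsilon_{d,e}(p)a_p(g_{d,e})+(-1)^{\frac{p-1}d+\frac{p-1}e}p .
\]
Here $\mathfrak G_{d,e}$ has dimension $\varphi(e)=1$ or $2$. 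For $e\in\{2,3,4,6\}$ we pick the tabulated eigenform.

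Two identifications are needed at these primes:
\begin{itemize}
\item $(-1)^{(p-1)/e}=\varsigma(e)(p)$ for $p\equiv1\pmod e$. For example, when $e=4$ we have $(-1)^{(p-1)/4}=\chi_2(p)$, which equals $\chi_{-2}(p)$ because $\chi_{-1}(p)=1$. The other cases are similar.
\item $\varepsilon_{d,e}(p)\,a_p(g_{d,e})=a_p(f^\sharp_{d,e})$. This amounts to showing that $\varepsilon_{d,e}(p)$, defined by $(-C_1(d)/4)^{(1-p)/e}$ mod $p$, equals $\chi_{-d}(p)$ for $d=3,4$ and is trivial for $d=2$ on these primes. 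This is a power-residue computation: $-C_1(d)/4$ equals $16$, $-27$, $-64$ for $d=2,3,4$ respectively. For $d=3$, $(-27)^{(p-1)/e}\equiv (-3)^{3(p-1)/e}$, a quadratic residue symbol when $e$ is even. For the cases where the exponent is odd, we adjust using the sign term.
\end{itemize}
This step is bookkeeping with quadratic reciprocity, done case by case.

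\textbf{Step 2: extending to all good primes.} This is the main obstacle, because Theorem~\ref{thm:H-version} only covers the primes split in $\Q(\zeta_{\mathrm{lcm}(d,e)})$. The first attempt is the following. Both sides are $4$-dimensional semisimple representations of $G_\Q$, and their restrictions to $G_{\Q(\zeta_N)}$, with $N=\mathrm{lcm}(d,e)$, agree by Chebotarev. So $\eta\cong(\rho_{f^\sharp}\oplus\psi\epsilon_\ell)\otimes\chi$ piecewise, that is, the two sides agree up to twisting constituents by characters of $\Gal(\Q(\zeta_N)/\Q)$.

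To pin down the twist, we use that $\rho_{f^\sharp}$ is irreducible and not of CM type, so its restriction stays irreducible. Then $\eta$ must contain $\rho_{f^\sharp}\otimes\chi$ for a unique character $\chi$, together with a $1$-dimensional piece $\psi\epsilon_\ell$ with $\psi$ of conductor dividing $N$. To determine $\chi$ and $\psi$, we compute $\BH(\varkappa(d,e);1;\mathbb F_p)$ for a few small primes in each nontrivial residue class mod $N$ and compare with $a_p(f^\sharp)$ and $\varsigma(d)\varsigma(e)(p)p$. The trace formula for $\BH$ at nonsplit primes, via Gauss sums over $\mathbb F_{p^k}$ or the Beukers--Cohen--Mellit definition for data over $\Q$, gives these values. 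Finitely many checks suffice to fix each character, since only finitely many candidates exist.

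\textbf{Step 3: finish with Brauer--Nesbitt.} Alternatively, the paper's extended EHMM at non-split primes yields truncated-series congruences for all $p$ directly. These can then be upgraded to equalities using the Weil bound $|a_p|\le 2p^{3/2}$ for $p$ large, with small $p$ checked numerically. Either route gives equality of traces on all Frobenius elements outside a finite set. Brauer--Nesbitt then gives the stated isomorphism of semisimplifications.
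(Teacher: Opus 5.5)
There is a genuine gap at the two CM cases $(d,e)=(3,4)$ and $(4,3)$. These are the same datum, and their forms have CM by $\Q(\sqrt{-3})$. The paper treats them separately, and both of your first two steps fail there.

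\textbf{Step 1 fails for $(3,4)$.} At split primes $\chi_{-3}(p)=\chi_{-4}(p)=1$, so what you actually need is $\varepsilon_{d,e}(p)=1$. That holds in every case except $(3,4)$. There $\varepsilon_{3,4}(p)\equiv(-27)^{(1-p)/4}$ is a quartic residue symbol, not a Dirichlet character; your claim that $(-3)^{3(p-1)/e}$ is a quadratic symbol for even $e$ fails for $e=4$.
\begin{itemize}
\item At $p=13$ we have $-27\equiv-1\pmod{13}$, so $\varepsilon_{3,4}(13)=-1$, while $a_{13}(g_{3,4})=70\neq0$.
\item Hence the two-dimensional trace at $13$ is $-70$. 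This matches $a_{13}(\eta(3\tau)^8)=-70$ from the $(4,3)$ side.
\item By contrast, every twist of $g_{3,4}$ by a character of $\Gal(\Q(\zeta_{12})/\Q)$ has $a_{13}=70$.
\end{itemize}
The ``sign term'' cannot absorb this, because it belongs to the one-dimensional constituent. So $(3,4)$ cannot be reached through $g_{3,4}$ at all. The paper instead uses $\varkappa(3,4)=\varkappa(4,3)$ and $g_{4,3}=\eta(3\tau)^8$, for which $(-64)^{(p-1)/3}\equiv1$ at split primes.

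\textbf{Step 2 fails for $(4,3)$.} You use that $\rho_{f^\sharp}$ is non-CM, so its restriction stays irreducible. For $(4,3)$ this is false: since $G_{\Q(\zeta_{12})}\subset G_{\Q(\sqrt{-3})}$, the restriction splits into two distinct characters. You need a separate argument. For example, the stabilizer of a character line is $G_{\Q(\sqrt{-3})}$. So the $G_\Q$-constituent is induced from $\psi\mu$, with $\mu$ a character of $\Gal(\Q(\zeta_{12})/\Q(\sqrt{-3}))$. That makes it $\rho_{\eta(3\tau)^8}$ twisted by $1$ or $\chi_{-4}$, and you must still decide which.

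\textbf{Non-CM cases.} Here your route differs from the paper's only in how the twist is fixed.
\begin{itemize}
\item The paper takes the one-dimensional piece $\varsigma(d)\varsigma(e)\epsilon_\ell$ from \cite{LTYZ}. It reads off the quadratic twist from Proposition~\ref{prop:key-modp}(1), a mod-$p$ congruence at non-split primes $p\equiv j\pmod e$.
\item You instead compute BCM traces at a few primes in each residue class. That is legitimate once the finite candidate list is justified.
\end{itemize}
That justification needs the remark that $\varphi\epsilon_\ell$ is the only weight-$2$ constituent over $G_{\Q(\zeta_N)}$. It is therefore $G_\Q$-stable, and $\eta^{\mathrm{BCM}}$ splits as $2+1$. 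Note also that these representations are $3$-dimensional, not $4$-dimensional.

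\textbf{Step 3 does not work.} Proposition~\ref{prop:key-modp}(1) gives congruences only modulo $p$. These cannot be upgraded to equalities against $|a_p|\le 2p^{3/2}$. Mod-$p$ information can only detect a sign when $a_p\not\equiv0\pmod p$, which is exactly how the paper uses it.
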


\begin{remark}\label{rem:1}
Note that, for $e=2,3,4,6$, the sign
$(-1)^{\frac{p-1}{e}}$ in Theorem \ref{thm:H-version} coincides, for primes $p\equiv 1\pmod e$, with
the quadratic Dirichlet characters $\varsigma(e)$. 
\end{remark} 

For example, when $(d,e)=(4,2)$, the EHMM yields
\begin{equation}\label{eq:explicitRV}
f_{4,2}^{\sharp}
=
\frac{\bigl(\eta(2\tau)\eta(4\tau)\bigr)^{12}}
{\eta(2\tau)^{24}+64\eta(4\tau)^{24}}
\left(E_2(2\tau)-2E_2(4\tau)\right)^2.
\end{equation}
This is the form $f_{16.4.a.a}$ in the notation of the $L
$-functions and
Modular Forms Database (LMFDB), which we adopt for all Hecke eigenforms
in this paper. Here, $\eta(\tau)=q^{1/24}\prod_{n=1}^{\infty}(1-q^n)$
is the Dedekind eta function, and
$E_2(\tau)=1-24\sum_{n,d\geq 1}nq^{dn}$ is the normalized Eisenstein
series of weight $2$. Thus, \eqref{eq:explicitRV} gives an explicit
construction of the Hecke eigenform appearing in \cite{LTYZ}. This
eigenform was originally identified  in \cite{Fuselier-McCarthy} and later studied in \cite{LTYZ} using the Faltings--Serre method together with the modularity of certain rigid Calabi--Yau threefolds.

Further details on
the EHMM and its variant are presented in
\S\ref{sec:EHMM-variation}, together with an outline of the proofs. The
method is then used to construct the families $\mathfrak G_{d,e}$ from REAB in \S\ref{constructingMF}
appearing in both main theorems. \S\ref{ss:CFGL} recalls the
CFGL framework which is used to establish congruences
between hypergeometric coefficients and Fourier coefficients, and thus the modularity of $\eta_{\varkappa(d,e),\ell,1}^{\mathrm{BCM}}$ . The
proofs of the main theorems are given in \S\ref{ss:modularity}.
Further questions, including applications to the computation of special
$L$-values of the corresponding modular forms, are discussed in
\S\ref{sec:future} and \S\ref{lvalueappendix}. The paper concludes with an appendix,
\S\ref{EHMMappendix}, listing the Hecke eigenforms $g_{d,e}$. A list of notation appears in \S\ref{sec:notation}.

\section*{Acknowledgments}
The second author was supported by the Simons Foundation grant \#MP-TSM-00002492 and the LSU Michael F. and Roberta Nesbit McDonald Professorship. The fourth author was supported by  NSF Grant DMS 2302531.

\section{EHMM and one of its variations}\label{sec:EHMM-variation}

The EHMM combines techniques from REAB, $p$-adic analysis, and the symmetries of hypergeometric series to construct explicit families of modular forms from input on the hypergeometric side and to establish the modularity of the corresponding hypergeometric Galois representations. 
 
Consider $$\varkappa = \{ \{a_{1}, a_{2}, \ldots, a_{n}\}, \{b_{1}=1, b_{2} \ldots, b_{n}\}\},$$ a hypergeometric datum of rational numbers so that $0<a_i,b_i\leq 1$.  Define the \textit{conjugate data}
\begin{equation}\label{eq:conjdata}
c \cdot \varkappa:= \{\{c \cdot a_{1},c \cdot a_{2}, \ldots, c \cdot a_{n}\}, \{1,c \cdot b_{2}, \ldots, c \cdot b_{n}\}\}
\end{equation}
for $c \in (\Z/M\Z)^{\times}$ with all entries interpreted modulo $\Z$.

 The hypergeometric character sums used in this paper are the traces of Frobenius for the Galois representation $\eta_{\varkappa,\ell,t}$, a twist of the initial version of hypergeometric Galois representations first defined by Katz \cite{Katz-Dwork,KatzESDE}. Refer to \cite{Win3X} for a survey of the various twists of hypergeometric Galois representations. We use the hypergeometric character sums in \eqref{eq:Hpdef}, originally defined by McCarthy \cite{McCarthy}.
 The setup is as follows. 
 
 Let $M$ be the least common denominator of the entries in the datum $\varkappa$. Then for each prime ideal $\mathfrak{p}$ of $\Z[\zeta_M]$ coprime to $M$ we can associate to the residue field $\kappa_{\mathfrak{p}}:= \Z[\zeta_{M}]/\mathfrak{p}$ of size $q$ a character via the $M$-th residue symbol, \begin{equation}\label{eq:iotadef}
\iota_{\mathfrak{p}}\left(\frac{i}{M}\right)(x):= \left(\frac{x}{\mathfrak{p}}\right)_{M}^{i} \equiv x^{(q-1)\frac{i}{M}} \pmod{\mathfrak{p}}
\end{equation} for all $x \in \Z[\zeta_{M}]$. We also use the Gauss sum, for  $\chi \in \fqhat$,  $$g(\chi) = \sum_{x \in \fq^{\times}} \chi(x) \zeta_{p}^{\textrm{Tr}(x)}.$$ 
Now define the hypergeometric character sums
\begin{equation}\label{eq:Hpdef}
\BH(\varkappa;t;{ \wp}) := \frac{1}{1-q} \sum_{\chi \in \widehat{\kappa_{\mathfrak{p}}^{\times}}} \chi((-1)^{n}t) \prod_{i=1}^{n} \frac{g(\iota_{\mathfrak{p}}(a_{j})\chi)g(\iota_{\mathfrak{p}}(-b_{j})\overline{\chi})}{g(\iota_{\mathfrak{p}}(a_{j}))g(\iota_{\mathfrak{p}}(-b_{j}))},
\end{equation}
where $t \in \kappa_{\mathfrak{p}}^{\times}$, and $g(\cdot)$ is the Gauss sum on $ \widehat{\kappa_{\mathfrak{p}}^{\times}}$. 
When $\BH(\varkappa;t; \wp)$ is independent of $\wp$ and its  conjugates, we write it as $\BH(\varkappa;t;\F_q)$ where $q=|\Z[\zeta_M]/\wp|$. 

An important perspective underlying our proofs is to regard, for a given $t\in\Q$, $\BH(\varkappa;t;{ \wp})$ as the Frobenius traces of the hypergeometric Galois representation $\eta_{\varkappa,\ell,t}$ of the absolute Galois group of $\Q(\zeta_{M})$, as alluded to in the introduction. Let
\begin{equation}\label{eq:Sta-HD}
   St_{\varkappa}=\{c\in (\Z/M\Z)^\times \mid c\cdot \varkappa=\varkappa\}.
\end{equation}
Let $K_\varkappa$ denote the fixed field of $\Q(\zeta_M)$ under $St_{\varkappa}$, where, as usual, we identify $\Gal(\Q(\zeta_M)/\Q)$ with $(\Z/M\Z)^\times$. The datum is then said to be \textit{defined over $K_\varkappa$}. This field plays an important role in hypergeometric arithmetic. In terms of $\BH(\varkappa;t;{ \wp})$, this means that their values lie in $K_\varkappa$. Equivalently, if $\varkappa$ is defined over $K_\varkappa$, then $\eta_{\varkappa,\ell,t}$ can be extended to $G_{K_\varkappa}$.

In the special case where $K_\varkappa=\Q$, i.e. the datum is defined over $\Q$, the work of Beukers–Cohen–Mellit \cite{BCM} provides a fixed choice of an extension of $\eta_{\varkappa,\ell,t}$ to $G_\Q$, which we denote by $\eta_{\varkappa,\ell,t}^{BCM}$, as mentioned in Theorem \ref{thm: Galois version}. The corresponding Frobenius traces are given by an extension of \eqref{eq:Hpdef} through an equivalent form that is applicable to almost all primes of $\Z$. One can check that the condition $e\in \{2,3,4,6\}$ is equivalent to requiring that $\varkappa(d,e)$ be defined over $\Q$.

Hypergeometric Galois representations are expected to to be the \'etale realization of a \textit{hypergeometric motive}, whose formal definition has not yet been written in the literature, to our knowledge. However, the expected realizations of these motives are better understood and are discussed in 
\cite{Fedorov18,fu-li-wan-GKZ-padic,KatzRigid,HypMot}. The heart of the EHMM seems to be the comparison between the $\ell$-adic and $p$-adic aspects of hypergeometric motives. 

First, define the classical hypergeometric series
 \begin{equation*}
 F(\varkappa,t) := \sum_{k \geq 0} A_{\varkappa}(k)t^{k}, \quad \textrm{where} \quad A_{\varkappa}(k) = \prod_{i=1}^{n} \frac{(a_{i})_{k}}{(b_{i})_{k}}, 
 \end{equation*}
 and $(a)_{k} = a(a+1) \cdots(a+k-1)$ is a Pochhammer symbol with $b_1$ typically assumed to be 1. Then define the truncated hypergeometric series as \begin{equation*}
 F(\varkappa,t)_{m} := \sum_{k=0}^{m} A_{\varkappa}(k)t^{k}.
 \end{equation*} 
 
 By the work of Dwork \cite{Dwork}, if the $b_i$ are all equal to 1, there is a well-defined $p$-adic unit root $\mu_{\varkappa,p,t}$, whose first $p$-adic digit can be computed using $F(\varkappa,t)_{p-1}$. In modern language, this unit root can be interpreted as a root of the characteristic polynomial of Frobenius acting on crystalline cohomology. Refer to \cite{kedlaya} for a modern exposition. Due to comparison isomorphisms between \'etale and crystalline cohomology, one anticipates that the characteristic polynomials of Frobenius for \'etale and crystalline cohomology should coincide at primes $p$ not dividing $M$ or the numerator/denominator of $t$. As there is a unique unit root by Dwork's theory, we anticipate that $\mu_{\varkappa,p,t}$ and the trace of Frobenius, which is the sum of the roots, should coincide modulo $p$. Such congruences are referred to as Gross--Koblitz-type congruences in \cite{HMM1}. 

Work in \cite{HMM1,LLL26} generalizes Dwork's theory to cases where the $\beta_i$ are not all equal to 1, assuming a sufficient regularity condition which ensures there is exactly one unit root. They also compute the value of $\BH(\varkappa,t,\wp)$ modulo $p^2$, establishing supercongruences in many cases. In \cite{HMM1,rosen}, the regularity is given via conditions on the parameter set. The regularity is reformulated as a geometric condition involving Hodge numbers in the forthcoming paper, \cite{rosenmixedI}. Assuming regularity, such results can be proved using the Gross-Koblitz formula \cite[Theorem 2.5]{HMM1}, which relates the Gauss sums in the $\BH$ function with the coefficients of the truncated hypergeometric series.  In any case, the conclusion is that the $\BH(\varkappa;1;\wp)$ and $F(\varkappa,1)_{p-1}$ values are congruent modulo $p$, up to a fixed embedding of $\zeta_M$ in the $p$-adic ring and an explicit normalizing factor, and this enables the EHMM to connect $p$-adic analysis to Galois representations.

 The relation with modular forms is produced via the theory of CFGL,  
 relating the truncated series $F(\varkappa,1)_{p-1}$ and certain $a_{p}(f_{\varkappa})$ coefficients via REAB. The idea is as follows. Begin with the Euler integral formula \cite[Equation (3.2)]{Win3X} for the hypergeometric series $F(\varkappa,1)$:

 \begin{equation}\label{eq:eulerint}
 F(\varkappa,1) = \frac{\G(b_{n})}{\G(a_{n})\G(b_{n}-a_{n})} \cdot \int_{0}^{1} t^{a_{n}}(1-t)^{b_{n}-a_{n}-1} F(\varkappa^{\flat},t) \frac{dt}{t},
 \end{equation}
 where $\Gamma(\cdot)$ is the gamma function,
 and for any datum $\varkappa = \{ \{a_{1}, a_{2}, \ldots, a_{n}\}, \{b_{1}=1, b_{2} \ldots, b_{n}\}\}$  and default choice of $\varkappa^{\flat}$ in \cite{HMM1} is  $$\varkappa^{\flat} = \{\{a_{1}, \ldots, a_{n-1}\},\{b_{1} = 1, \ldots, b_{n-1}\}\}.$$ 
Then consider the differential
 \begin{equation}\label{eq:integrand}
 f_{\varkappa}(t) := t^{a_{n}}(1-t)^{b_{n}-a_{n}-1}F(\varkappa^{\flat},t) \frac{dt}{t}.
 \end{equation}
Now the important work of Ramanujan \cite{Ramanujan-pi} and later developments, such as \cite{BBG-Ramanujan,BB}, show that choosing $t = t(\tau)$ to be an appropriate modular function makes $f_{\varkappa}(t)$ a modular form, in certain cases.  The family of examples considered in \cite{HMM1} involves the data
 $$\varkappa(d;r,s):= \{\{1/d,(d-1)/d,r\},\{1,1,s\}\}$$
 for $d \in \{2,3,4\}$ and $(r,s)$ in appropriate subsets of $\Q^{2}$.

 The CFGL, see for example \cite{Stienstra-Beukers}, then provides  
 explicit congruences 
 between the coefficients of the $t$-expansion  
 and the $q$-expansion (obtained by substituting a suitable modular function for $t$) for $f_{\varkappa}(t)$.

 We now discuss the CFGL via the $t$ and $q$ expansions of $F(\varkappa(d;r,s)^{\flat},t)$. The $t$-expansion is obtained by writing the differential of \eqref{eq:eulerint} as a formal power series in $t$. This expansion has terminating $F$ functions as coefficients in the indexing variable $k$; see \Cref{lem:HG-product} for a general result and \eqref{eg:1} for the case $\varkappa(2;1/2,1/2)$. Then appropriate choices of $k$ that are $p$-adically close to rational numbers and a careful analysis of hypergeometric coefficients modulo $p$ give congruences between the terminating and truncated series produced from the $t$-expansion. See the discussions in \Cref{ss:CFGLhere} for more details.
 
 In the $q$-expansion, choosing $t = t(\tau)$ makes $F(\varkappa(d;r,s)^{\flat},t(\tau))$ 
 into a weight one modular form for $\Gamma_{1}(6-d)$, see \cite{StillerHF, Yang04}. In particular, $f_{\varkappa(d;r,s)}$ is a weight three modular form on an appropriate finite-index subgroup of $\Gamma_{1}(6-d)$. 
 For example,  consider the $d=2$ and $d=3$ cases. The $d=2$ case relies on the modular lambda function
$$\lambda(\tau) = 16 \frac{\eta(\tau/2)^{8}\eta(2 \tau)^{16}}{\eta(\tau)^{24}},$$
a Hauptmodul for $\Gamma(2)$,  conjugate to $\G_1(4)$, the associated weight $1/2$ Jacobi theta functions 

 $$\theta_{2}(\tau) = \sum_{n \in \Z} q^{(2n+1)^{2}/8}, \quad \theta_{3}(\tau) = \sum_{n \in \Z} q^{n^{2}/2}, \quad \textrm{and} \quad \theta_{4}(\tau) = \sum_{n \in \Z} (-1)^{n}q^{n^{2}/2},$$
 and
\begin{equation}\label{eq:d=2eval}
 F(\varkappa(2;r,s)^{\flat},\lambda(\tau)) = \theta_{3}^{2}(\tau).
 \end{equation}
 This setup leads to 
 the weight three cusp forms \cite{HMM1,HMM2,rosen}
 $$\mathbb{K}_{2}(r,s)(\tau):= \eta(\tau/2)^{16s-8r-12}\eta(\tau)^{30-24s}\eta(2 \tau)^{8s+8r-12}$$
 and the weight two $\mathbb{K}_{1}$ functions \cite{rosenK1}. In a parallel manner, for $d = 3$, the Hauptmodul 
 $$t_{3}(\tau) = 27 \frac{\eta(3 \tau)^{9}}{\left(3\eta(3 \tau)^{3}+\eta(\tau/3)^{3}\right)^{3}}$$
 for $\Gamma_{1}(3)$, the associated weight one cubic theta functions \cite{Borweincubic,BBG} and REAB lead to the weight three cusp forms \cite{grove2025hypergeometricmodularityconjecturesdawsey}
 $$\mathbb{K}_{3}(r,1)(\tau):= \eta(\tau)^{9-12r}\eta(3 \tau)^{12r-3}$$ 
 and the associated weight two $\mathbb{K}_{5}$ functions \cite{Barman_MaityEHMM}.

Further, in a small number of cases, Clausen formula for hypergeometric series combine with identities such as \eqref{eq:d=2eval}, to produce weight four modular forms. For example, in \cite{Barman_MaityEHMM} the authors use \eqref{eq:d=2eval} to construct the $\mathbb{K}_{4}$ family of weight four eta-quotients when $d=2$ for the data 
$$\{\{1/2,1/d,1-1/d,r\},\{1,1,1,r+1/2\}\},$$
with $24r \in \Z^{+}$. See the appendix of \cite{HMM1} for more details. When $d=3,4$, the modular forms  consist of a modular function
multiplied by the square of a weight-two  
Eisenstein series 
$E_{2,N}(\tau):= \frac1{N-1}\left(NE_2(N\tau)-E_2(\tau)\right).$  Precisely, using the modular functions in \Cref{tab:bigger triangle}, they are 
\begin{equation}\label{eqn: Euler}
t_{d,+}^r ( E_{2,6-d})^2, \quad d =3, 4, 
\end{equation}
where $r=i/4$ for $d=4$, and $r=i/6$ for $d=3$. 

     An important aspect of the EHMM is that the cusp forms constructed from REAB and hypergeometric identities can be completed to Hecke eigenforms, in many cases \cite{HMM1,HMM2, Barman_MaityEHMM, grove2025hypergeometricmodularityconjecturesdawsey, rosen}. These eigenforms are built from the conjugates
of the hypergeometric data.

In this article, we use the partition \eqref{eq:partition} to construct the weight four cusp forms $\BG_{d}(i/e)(e \tau)$ in \eqref{eq:g(d,e,i)} for $d \in \{2,3,4\}$. For example, consider $ \varkappa(4,2)$ from \eqref{eq:keydata} and \eqref{eq:explicitRV}. Then the conjugates are $\varkappa(4,2)$ and $3 \cdot \varkappa(4,2) = \varkappa(4,2)$. At the level of $\BG_{2}$ functions, this conjugate data corresponds to the choices $1/4$ and $3/4$.  There are eigenforms which are linear combinations of the $\BG_{2}(1/4)(4 \tau)$ and $\BG_{2}(3/4)(4 \tau)$:
$$f_{16.4.a.a} = \BG_{2}(1/4)(4 \tau) + 4 \BG_{2}(3/4)(4 \tau)$$
and 
$$f_{8.4.a.a} = \BG_{2}(1/4)(4 \tau) - 4 \BG_{2}(3/4)(4 \tau). $$
See \S \ref{constructingMF} for more details on the cusp forms $\BG_{d}(i/e)(e \tau)$ and the associated families $\Gfam_{d,e}$. 
We now provide more detail on the variation of the EHMM used throughout.

\subsection{Over \texorpdfstring{$\C$}{C} background of the variation}
We first discuss the hypergeometric background that corresponds to the partition  $\HD^a\star \HD^\flat$  at argument $t=1$ using \eqref{eq:star-at1}. 
This idea was used by Zagier in \cite{Zagier-top-diff} and we refer to it as  Zagier's trick.

We will focus on the cases with 
\begin{equation*}
 HD^a=\varkappa_{alg}(r):=\{\{r,1-r\},\{1,1/2\}\}, \quad \mbox{and} \quad HD^\flat=\varkappa_3(d), \quad \mbox{for } d=2, 3, 4, 
\end{equation*}
where $\varkappa_3(d)=\{\{1/d,1-1/d,1/2\},\{1,1,1\}\}$
and $r$ is certain rational number.

To proceed with our discussion making use of formal power series in \S \ref{ss:CFGL}, we will first express the series $F(\varkappa_{alg}(a),1/t)$ in $1/t$ to a power series in $t$ by considering its analytic continuation from $\infty$ to $0$. It can be done by specializing \cite[Theorem 2.3.2]{AAR}  to $b=1-a$ and $c=\frac12$. Then,
\begin{align*}
    F(\varkappa_{alg}(a), 1/t)&= (-1)^{-a}\frac{\G(\frac12)\G(1-2a)}{\G(a)\G(\frac12-a)}t^{a} F(\varkappa_{cont}(a), t)\\& \quad +(-1)^{a-1} \frac{\G(\frac12)\G(2a-1)}{\G(a)\G(a-\frac12)}t^{1-a} F(\varkappa_{cont}(1-a), t),    
 \end{align*} where $\varkappa_{cont}(a):=\{\{a, a+1/2\},\{1, 2a\}\}$. 
 Thus from
the combination of the  algebraic formula \cite[Equation 15.1.14]{handbook}  $$
 F(\varkappa_{cont}(a), t)= \frac1{\sqrt{1-t}}\left(\frac{1+\sqrt{1-t}}2\right)^{1-2a},
$$ and  $\G(z)\G(z+\tfrac12) = 2^{1-2z}\G(\tfrac12)\G(z)$,  \cite[Theorem 2.3.2]{AAR}  tells us
 \begin{align*}    
 F(\varkappa_{alg}(a), 1/t)   &=(-1)^{-a} (t/4)^{a} F(\varkappa_{cont}(a), t)+(-1)^{a-1} (t/4)^{1-a} F(\varkappa_{cont}(1-a), t)\\
   &=(-1)^{-a}  A_a(t)+(-1)^{a-1} \frac14\frac t{(1-t)}\frac1{A_a(t)}, 
\end{align*} 
where $A_a(t)= (t/4)^{a} \frac1{\sqrt{1-t}}\left(\frac{1+\sqrt{1-t}}2\right)^{1-2a}$. Note that we pick the principal branch for all square roots and other radicals.

By Clausen's formula \cite[Section 3.1]{AAR},  when $|4z(1-z)|\leq 1$,
\begin{align}\label{eq:Clausen}
   F(\varkappa_2(d),z)^2= F(\varkappa_3(d),4z(1-z)), \text{ where } \varkappa_2(d)=\{\{1/d,1-1/d\},\{1,1\}\}.
\end{align}
When fixing $z$  such that  
$t=4z(1-z)$. 
One has  
$$
F(\varkappa_{alg}(a), 1/4z(1-z))=\tfrac{1-z}{1-2z}\left(\left(\tfrac z{z-1}\right)^a+\left(\tfrac z{z-1}\right)^{1-a}\right);
$$
Zagier's trick  essentially becomes 
\begin{equation}\label{eq:Zagier}
    F\left(\varkappa_{alg}(a),\tfrac 1t\right)F(\varkappa_3(d),t)\tfrac{dt}{t}=\left(\left(\tfrac z{z-1}\right)^a+\left(\tfrac z{z-1}\right)^{1-a}\right)F(\varkappa_2(d),z)^2 \tfrac{dz}{z}. 
\end{equation} 
The identity leads to our constructions of hypergeometric-type modular forms in \S \ref{constructingMF}.

\subsection{Modular background}
The monodromy group of the differential equation satisfied by $F(\varkappa_{alg}(a), t)$ is a finite Dihedral group \cite{Win3X}.  
For each $d$, the monodromy group corresponds to  $F(\varkappa_3(d),t)$ is isomorphic to the triangle group $(2,\tfrac{2d}{d-2},\infty)$ when reading $1/\infty =0$. Such a group has an index-two triangle subgroup  $(\tfrac{d}{d-2},\infty, \infty)$, isomorphic to the monodromy group corresponding to the datum $\varkappa_2(d)$.  We will fix our choice of  
$(2,\tfrac{2d}{d-2},\infty)$ commensurable with $\SL_2(\Z)$  and accordingly $(\tfrac{d}{d-2},\infty, \infty)$ is isomorphic to $\G_1(6-d)$ when identifying $\pm 1$ as the trivial action on the upper half-plane $\H$. 
All the relevant information is collected in  \Cref{tab:bigger triangle} and \Cref{table: Index2}, see \cite[Appendix II]{HMM1} for example.

\renewcommand{\arraystretch}{1.3}
\begin{table}[ht]
\begin{center}
$$
\begin{array}{c|ccccccc}
 d  &\left(2,\tfrac{2d}{d-2},\infty\right)&t_{d}&C_1(d) \\ \hline
 2&\G_0(2)\simeq(2,\infty,\infty)&u(\tau)=-64\frac{\eta(\tau)^{24}}{\eta(2\tau)^{24}}&-64
 \\
 3    & \G_0(3)^+\simeq(2,6,\infty) &t_{6+}= \frac{108\eta(\tau)^{12}\eta(3\tau)^{12}}{\left(\eta(\tau)^{12}+27\eta(3\tau)^{12}\right)^{2}}&108 
 \\
 4&\G_0(2)^+\simeq(2,4,\infty) &t_{4+}=\frac{256\eta(\tau)^{24}\eta(2\tau)^{24}}{\left(\eta(\tau)^{24}+64\eta(2\tau)^{24}\right)^{2}} &256
\end{array}
$$ 
  \caption{The triangle groups $(2,\tfrac{2d}{d-2},\infty)$ with the chosen Hauptmoduln $t_d$ with leading $q$-coefficients $C_1(d)$}
    \label{tab:bigger triangle}
 \end{center}  
\end{table}

\begin{table}[ht]
 \begin{center}
$$
\begin{array}{c|ccccccc}
 d  &  \G_1(6-d)& z_d &\frac{z_d}{z_d-1}&F(\varkappa_2(d),z_d)^2& \Theta z_d\\ \hline
 2  & \G_1(4)&\frac{\l(2\tau)}{\l(2\tau)-1}&\l(2\tau)&\theta_4(2\tau)^4&\theta_3(2\tau)^4=E_{2,4}(\tau)\\
 3  &\G_1(3)&\frac{27\eta(3\tau)^{12}}{\eta(\tau)^{12}+27\eta(3\tau)^{12}}&  -27\left(\frac{\eta(3\tau)}{\eta(\tau)}\right)^{12}&E_{2,3}(\tau)&(1-z_3)E_{2,3}(\tau)\\
 4&\G_1(2)&\frac{64\eta(2\tau)^{24}}{\eta(\tau)^{24}+64\eta(2\tau)^{24}}& -64\left(\frac{\eta(2\tau)}{\eta(\tau)}\right)^{24}&E_{2,2}(\tau)&(1-z_4)E_{2,2}(\tau)
\end{array}
$$  
  \caption{ The triangle groups $(\tfrac{d}{d-2},\infty,\infty)$ and relevant information}
    \label{table: Index2}
 \end{center}    
\end{table}

\renewcommand{\arraystretch}{1}

\subsection{Outline of the proofs}

In \S \ref{constructingMF}, we construct the spaces $\Gfam_{d,e}$ of weight-4 Hecke eigenforms arising from the partition \eqref{eq:partition} and the relevant modular background. To relate these families to our hypergeometric Galois representations, we rely crucially on \cite[Theorem 2.3]{HMM1}, which establishes, when $p\equiv1 \pmod{M(d,e)}$, a supercongruence modulo $p^2$ between ${\mathbf H}(\varkappa(d,e);1;\F_p)$ and
\[
F(\varkappa(d,e),1)_{p-1}+(-1)^{\tfrac{p-1}{d}+\tfrac{p-1}{e}}p.
\]
Since ${\mathbf H}(\varkappa(d,e);1;\F_p)$ takes values in $\Z$ here and its absolute value is bounded by $2p^{3/2}+p$, this supercongruence reduces the determination of its precise value to that of the truncated hypergeometric series modulo $p^2$. In \S \ref{ss:CFGL}, using CFGL, we relate $F(\varkappa(d,e),1)_{p-1}$ to the $p$th coefficient of any normalized Hecke eigenform $g_{d,e}\in\Gfam_{d,e}$ and, based on Proposition \ref{prop:key-modp}, determine the finite-order twist appearing in Theorem \ref{thm: Galois version}. Finally, in \S \ref{ss:modularity}, we assemble these ingredients to prove both of our main theorems.

\section{Constructing the \texorpdfstring{$\mathfrak G_{d,e}$}{G d,e} families  of modular forms}\label{constructingMF}

Now we construct the spaces $\Gfam_{d,e}$ of weight-4 modular forms on congruence subgroups of $\G_1(d)$, for $d=2$, $3$, $4$. Our construction is motivated by  \Cref{eq:Zagier}.

Combining the notations in Table \ref{table: Index2}, we define 
\begin{multline}\label{eq:g(d,e,i)}
    {\BG}_d(i/e):=\left(\tfrac{C_1(d)}4\right)^{-i/e}\left(\tfrac{z_d}{1-z_d}\right)^{i/e}F(\varkappa_2(d),z_d)^2\cdot \Theta z_d \\ 
    = \left(\tfrac 4{C_1(d)}\cdot \tfrac{z_d}{1-z_d}\right)^{i/e}(1-z_d)E^2_{2,6-d}, 
\end{multline}
where $\Theta=q\frac{d}{dq}$ 
and we fix our choice of the modular function $\left(\tfrac4{C_1(d)}\frac{z_d}{1-z_d}\right)^{1/e}$ that has leading $q$-coefficient one.

 For example, substituting $d = 3$ in \eqref{eq:g(d,e,i)} leads to the cusp forms 
\begin{equation}
\BG_{3}(i/e)(e\tau) = E_{2,3}(e \tau)^{2}  \cdot \frac{\eta(e \tau)^{12(1-i/e)}\eta(3e\tau)^{12(i/e)}}{\eta(e\tau)^{12}+27\eta(3e\tau)^{12}}.
\end{equation}
Now, to ensure congruence cusp forms, we restrict to the rational numbers $i/e \in (0,1)$ such that $12i/e$ is an integer. These eleven rational numbers are grouped into five families using the values $e \in \{2,3,4,6,12\}$. For example, consider the $e=3$ case. 
The corresponding cusp forms are 
$$
\BG_{3}(1/3)(3 \tau) =\frac{\eta(3\tau)^8\eta(9\tau)^4}{\eta(3\tau)^{12}+27\eta(9\tau)^{12}}E_{2,3}(3\tau)^2$$
and
$$\BG_{3}(2/3)(3 \tau) =\frac{\eta(3\tau)^4\eta(9\tau)^8}{\eta(3\tau)^{12}+27\eta(9\tau)^{12}}E_{2,3}(3\tau)^2
$$
on $\G_0(27)$. The corresponding $q$-expansions are
$$ \BG_{3}(1/3)(3 \tau) = q+q^{4}-25q^{7}+45q^{10}+20q^{13}-135q^{16} + \cdots$$
and
$$\BG_{3}(2/3)(3 \tau) = q^{2} + 5q^{5} - 7q^{8} - 5q^{11}-25q^{14} + 130q^{17} + \cdots.$$

From the hypergeometric perspective as described in  \cite{HMM1,grove2025hypergeometricmodularityconjecturesdawsey,rosen, rosenK1}, the subspace $\Gfam_{3,3}$ generated by $\{\BG_{3}(1/3)(3 \tau), \BG_{3}(2/3)(3 \tau) \}$ of the weight 4 cusp forms on $\G_0(27)$ is invariant under all the Hecke operators. To identify Hecke eigenforms $\beta_{3,3,1} \BG_{3}(1/3)(3 \tau)+ \beta_{3,3,2}\BG_{3}(2/3)(3 \tau)$ of $\Gfam_{3,3}$,  
it  suffices to consider the action of $T_2$ on $\Gfam_{3,3}$.  
In particular, we have 
$$
  T_2(\BG_{3}(1/3)(3 \tau))=9\BG_{3}(2/3)(3 \tau),  \qquad T_2(\BG_{3}(2/3)(3 \tau))=\BG_{3}(1/3)(3 \tau),
$$ 
using Sturm bounds. The relation $T_{2}^{2}=9I$ on $\Gfam_{3,3}$ allows us to choose $-3$ or $3$ for $\beta_{3,3,2}$ when taking $\beta_{3,3,1} = 1$. 
These give us the two normalized Hecke eigenforms 

$$f_{27.4.a.a}(\tau) = \BG_{3}(1/3)(3 \tau) -3  \BG_{3}(2/3)(3 \tau), $$
and 
$$f_{27.4.a.b}(\tau)= \BG_{3}(1/3)(3 \tau) + 3 \BG_{3}(2/3)(3 \tau).$$

We now give a more general result that works for all the cases.  
\begin{proposition}\label{lemma: constructingforms}
Let $d=2$, $3$,  $4$, and $M_d=8$, $12$, $24$, respectively. For each positive integer $i$ less than $M_d$,    $ {\BG}_d(i/M_d)(M_d\tau)$ is a congruence weight 4 cusp form. Its level and (primitive) Dirichlet character $\xi_{d,e}$ are $32$ with $\left(\frac 2\cdot \right)^{i}$, $432$ with $\left(\frac 3\cdot \right)^{i}$, or $1152$ with $\left(\frac{2}\cdot \right)^{i}$,  for $d=2$, $3$, and $4$, respectively.  

In particular, for each positive divisor $e>1$ of $M_d$, there are explicit 
$\beta_{d,e,i}$ for each $i\in (\Z/e\Z)^\times$ so that  $\beta_{d,e,i}^2\in\Z$ and 
           $$
            g_{d,e} (\tau)= \sum_{i\in (\Z/e\Z)^\times}\beta_{d,e,i} \cdot {\BG}_d(i/e)(e\tau)
           $$ is a Hecke eigenform determined up to tensoring with any Dirichlet character of conductor modulo $e$.  
\end{proposition}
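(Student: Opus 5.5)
We prove the first assertion by writing each ${\BG}_d(i/M_d)(M_d\tau)$ as an explicit product of an eta quotient and a weight-2 Eisenstein series, and then using the standard criteria for eta quotients. From \eqref{eq:g(d,e,i)} and Table~\ref{table: Index2}, the factor $\frac{4}{C_1(d)}\frac{z_d}{1-z_d}$ equals, up to the normalizing constant, $\lambda(2\tau)/16$ times a sign for $d=2$, $(\eta(3\tau)/\eta(\tau))^{12}$ for $d=3$, and $(\eta(2\tau)/\eta(\tau))^{24}$ for $d=4$.

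\textbf{Explicit formulas.} The $M_d$-th root with leading coefficient one is therefore an explicit eta quotient:
\begin{itemize}
\item for $d=3$, $\bigl(\eta(3\tau)/\eta(\tau)\bigr)^{12 i/12}=\eta(3\tau)^{i}\eta(\tau)^{-i}$;
\item for $d=4$, $\eta(2\tau)^{i}\eta(\tau)^{-i}$;
\item for $d=2$, $\eta(\tau)^{-i}\eta(4\tau)^{2i}\eta(2\tau)^{-i}$, using $\lambda(2\tau)=16\eta(\tau)^8\eta(4\tau)^{16}/\eta(2\tau)^{24}$.
\end{itemize}
The remaining factor $(1-z_d)E_{2,6-d}^2$ has known form:
\begin{itemize}
\item for $d=2$, it is $\theta_4(2\tau)^4\theta_3(2\tau)^4$, an eta quotient of weight $4$ on $\G_0(4)$;
\item for $d=3,4$, it is $E_{2,6-d}^2$ divided by $\eta(\tau)^{12}+27\eta(3\tau)^{12}$ (resp.\ $\eta(\tau)^{24}+64\eta(2\tau)^{24}$), times $\eta(\tau)^{12}$ (resp.\ $\eta(\tau)^{24}$), as in the displayed $d=3$ example.
\end{itemize}
The denominators are holomorphic modular forms on $\G_0(6-d)$ that do not vanish on $\H$: each is, up to a factor of $\eta$, a weight-$6$ (resp.\ $12$) form with a nonzero constant term whose only zeros lie at the elliptic point of the Fricke-extended group, where $(1-z_d)$ vanishes. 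So we would instead use $(1-z_d)E^2_{2,6-d} = \Theta z_d\cdot E_{2,6-d}/z_d\cdot(\ldots)$, or directly verify holomorphy via the valence formula.

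\textbf{Level, character and cuspidality.} Replacing $\tau$ by $M_d\tau$ turns the fractional eta exponents into an honest eta quotient
\[
\prod_{\delta} \eta(\delta\tau)^{r_\delta}
\]
times a form on $\G_0(M_d(6-d))$. Ligozat's criteria ($\sum \delta r_\delta\equiv 0$ and $\sum (N/\delta) r_\delta\equiv 0 \pmod{24}$, character $\bigl(\frac{(-1)^k\prod\delta^{r_\delta}}{\cdot}\bigr)$) then give modularity on $\G_0(N)$ with a quadratic nebentypus. The character is $\bigl(\frac{\prod \delta^{r_\delta}}{\cdot}\bigr)$, which by the exponents above is $\bigl(\frac 2\cdot\bigr)^{i}$ or $\bigl(\frac 3\cdot\bigr)^{i}$. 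Computing orders at the cusps of $\G_0(N)$ with $N=32,432,1152$ shows positivity at every cusp, since the vanishing comes from the positive order $i/M_d$ at $\infty$ and from $1-z_d$ elsewhere. This is a finite check, which I would organize by listing cusp widths.

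\textbf{Hecke eigenforms.} For the second claim, the key input is that the space $\Gfam_{d,e}$ spanned by ${\BG}_d(i/e)(e\tau)$ for $i\in(\Z/e\Z)^\times$ is Hecke-stable. The $q$-expansion of ${\BG}_d(i/e)(e\tau)$ is supported on exponents $n\equiv i\pmod e$, because the remaining factor is a series in $q^e$. A Hecke operator $T_p$, for $p\nmid N$, maps the support class $i$ to the class $ip^{-1}$, so it permutes the classes. We would prove stability by verifying, for each $(d,e)$, that $T_p{\BG}_d(i/e)(e\tau)$ is a multiple of ${\BG}_d(ip^{-1}/e)(e\tau)$ up to the Sturm bound. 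A conceptual argument is the twisting $\sum_n \zeta_e^{jn}a_nq^n$, under which the classes are eigenspaces of the action of $(\Z/e\Z)$ by $\tau\mapsto\tau+1/e$. Alternatively, identify each class with the newform space cut out by the twists.

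Given stability with $T_p$ acting as a weighted permutation with integer-square weights, as in the $T_2^2=9I$ example, diagonalizing yields the $\beta_{d,e,i}$ with $\beta_{d,e,i}^2\in\Z$. The resulting eigenforms differ by $\beta_{d,e,i}\mapsto\chi(i)\beta_{d,e,i}$, which is exactly twisting by a character modulo $e$.

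\textbf{Main obstacle.} The main obstacle is Hecke stability. Diagonalization and the level computations are routine, but stability seems to require either a finite case-by-case Sturm bound check or an identification of $\Gfam_{d,e}$ as a full twist orbit of a CM-free newform. I would first attempt the case-by-case computer verification for the eleven values $i/e$.
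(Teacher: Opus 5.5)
Your plan follows the same route as the paper. Fractional powers of $\frac{4}{C_1(d)}\frac{z_d}{1-z_d}$ become eta quotients whose transformation law gives level and character; Ligozat's criteria make the paper's appeal to the $\eta$ transformation formula explicit. Cuspidality comes from the orders at the cusps of $\G_1(6-d)$, Hecke stability is verified case by case, and the eigenforms come from diagonalizing. Your exponents for $d=3,4$ check out: $\sum\delta r_\delta=24i$, $\sum(N/\delta)r_\delta=-24i$, and $\prod\delta^{r_\delta}=3^i$, resp.\ $2^i$.

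\textbf{Modularity part.} Two points need correcting.

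For $d=2$ your exponents are wrong. We have $\frac{4}{C_1(2)}\frac{z_2}{1-z_2}=\lambda(2\tau)/16=\eta(\tau)^{8}\eta(2\tau)^{-24}\eta(4\tau)^{16}$, so its $i/8$-th power is $\eta(\tau)^{i}\eta(2\tau)^{-3i}\eta(4\tau)^{2i}$. Your $\eta(\tau)^{-i}\eta(2\tau)^{-i}\eta(4\tau)^{2i}$ has order $5i/24$ at $\infty$ instead of $i/8$. After $\tau\mapsto 8\tau$ it does not even have integral $q$-exponents when $3\nmid i$. With the correct exponents and $\theta_4(2\tau)^4\theta_3(2\tau)^4=\eta(2\tau)^{16}\eta(4\tau)^{-8}$, you recover the appendix formula $\eta(8\tau)^{i}\eta(16\tau)^{16-3i}\eta(32\tau)^{2i-8}$. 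For it, $\sum\delta r_\delta=24i$, $\sum(32/\delta)r_\delta=24$ and $\prod\delta^{r_\delta}=2^{i+24}$, giving level $32$ and character $\left(\frac 2\cdot\right)^i$.

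Your paragraph on the denominators is false as written. We have $\eta(\tau)^{12}+27\eta(3\tau)^{12}=\eta(\tau)^{12}/(1-z_3)$ and $\eta(\tau)^{24}+64\eta(2\tau)^{24}=\eta(\tau)^{24}/(1-z_4)$. So both do vanish in $\H$, namely at the elliptic point of $\G_0(6-d)$ where the Hauptmodul $z_d$ has its pole. There $1-z_d$ has a pole of $\tau$-order $3$, resp.\ $2$, not a zero. Holomorphy of $(1-z_d)E_{2,6-d}^2$ holds because, by the valence formula, all zeros of $E_{2,3}$ (resp.\ $E_{2,2}$) sit at that point with $\tau$-order $2$ (resp.\ $1$). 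Thus $E_{2,6-d}^2$ absorbs the pole. Your valence-formula fallback is the right argument; the justification you wrote should be replaced by this count.

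\textbf{Hecke stability.} This is the substantive gap. The paper also defers stability to a case-by-case check. But checking that $T_p\BG_d(i/e)(e\tau)$ is a multiple of $\BG_d(ip^{-1}/e)(e\tau)$ up to the Sturm bound is one computation per prime, so it cannot cover all $p\nmid de$.

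What turns a finite computation into a proof is the twist argument you mention only in passing:
\begin{enumerate}
\item Verify, by a single Sturm-bound comparison, that one explicit $g=\sum_i\beta_i\BG_d(i/e)(e\tau)$ with all $\beta_i\neq0$ equals a known newform (e.g.\ the LMFDB forms in Tables~\ref{G2table}--\ref{G4table}).
\item Since every character $\chi$ modulo $e\mid 24$ is quadratic, each $g\otimes\chi=\sum_i\chi(i)\beta_i\BG_d(i/e)(e\tau)$ is again an eigenform of every $T_p$ with $p\nmid de$, with the same nebentypus.
\item These $\varphi(e)$ twists are linearly independent and span $\Gfam_{d,e}$.
\end{enumerate}
This gives stability, the eigenbasis, and the twist ambiguity at once.

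Your ``integer-square weights'' is also inaccurate. The weights of $T_p$ in the basis $\BG_d(i/e)(e\tau)$ are integers but not squares in general. Still, $\beta_i^2$ is the ratio of two of them (take $p\equiv i \bmod e$ with $a_p(g)\neq0$), and $\beta_i=a_i(g)$ is an algebraic integer, so $\beta_i^2\in\Z$.

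Your CM-free hypothesis is not needed, since $\Gfam_{3,4}$ is the twist orbit of the CM forms $g_\pm$ of 48.4.c.a. However, $(d,e)=(4,3)$ is not a twist orbit at all. There $\BG_4(1/3)(3\tau)=\eta(3\tau)^8$ is fixed by twisting with $\chi_{-3}$, and $\Gfam_{4,3}$ is spanned by $f_{9.4.a.a}(\tau)$ and $f_{9.4.a.a}(2\tau)$. Stability holds in this case because $T_p$ acts by the scalar $a_p(f_{9.4.a.a})$, and the normalized eigenform has $\beta_{4,3,2}=0$. So this case must be handled separately.
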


\begin{proof}[Proof]
For a fixed $d \in \{2,3,4\}$, the function  
$$
   F(\varkappa_2(d),z_d)^2\cdot \Theta{z_d}=(1-z_d)E^2_{2,6-d}
$$
is a weight $4$ modular form on $\G_1(6-d)\simeq (\frac{d}{d-2}, \infty, \infty)$ with simple zeros at the elliptic point of order $\frac{d}{d-2}$ and the cusp $0$. The modular function  $\frac{z_d}{z_d-1}$   has a  simple zero at cusp $i\infty$, a simple pole at $0$, and value $1$ at the elliptic point of order  $\frac{d}{d-2}$. Hence the form ${\BG}_d(i/M_d)(\tau)$ is a modular form with a suitable character introduced by $\left(\frac{z_d}{1-z_d}\right)^{i/e}$ via the transformation formula of $\eta$-function, which ensures that ${\BG}_d(i/M_d)(M_d\tau)$ is a modular form on $\G_1(4\cdot 8)$, $\G_1(3\cdot 12^2)$, and $\G_1(2\cdot 24^2)$, for $d=2$, $3$, and $4$, respectively.

As in \cite{HMM1,grove2025hypergeometricmodularityconjecturesdawsey,rosen, rosenK1}, the subspace spanned  by the cusp forms 
$$
 {\BG}_d(i/M_d)(M_d\tau), \quad i=1, \ldots, M_d-1,
$$ over $\overline {\Q}$ 
are invariant under all the Hecke operators coprime to $de$. 

Now we can construct Hecke eigenforms using suitable combinations of ${\BG}_d(i/e)(e\tau)$. 
\end{proof}

\begin{remark}
\begin{enumerate}
\item  One can check the proof on a case-by-case basis as in \cite{HMM1,grove2025hypergeometricmodularityconjecturesdawsey}, or use a geometric argument as in \cite{rosen,rosenK1}. However, we omit the details, and assure the reader the details have been checked on a case-by-case basis as well. 
\item  
In  \Cref{EHMMappendix}, we give a chosen Hecke eigenform for each of the $\Gfam_{d,e}$ families in   \Cref{G2table,G3table,G4table} with the primitive character listed in the last column.  
\item  For $d=6$, the function $z_6$ such that $4z_6(1-z_6)=1728/j$ is not a modular function on any subgroup of $\SL_2(\Z)$, so we omit the relevant discussion here.  
\end{enumerate}
 
\end{remark}

\section{Commutative Formal Group Laws}\label{ss:CFGL}
\subsection{Background}
In the previous section, we have prepared the families of modular forms, as building blocks, to be used for our main theorems. 
We will first explain the nature of $ \varepsilon_{d,e}$  for  \Cref{thm:H-version}.  
This can be done via congruences. In this section, we will employ standard $p$-adic analysis. We will use techniques converting Gamma quotients to $p$-adic Gamma quotients explained in \cite{LR} combined with commutative formal group laws, as \S 3.6 of \cite{HMM1}. 

The next lemma will be used to deal with the coefficients of $\BG_d(i/e)$  and hence $g_{d,e}$ in $t$. It is an extension of \cite[Lemma 3.5]{HMM1}.
\begin{lemma}\label{lem:HG-product}
Set $\varkappa^\flat=\{\{a_1,\dots a_n\},\{b_1=1,\dots b_n\}\}$, $\varkappa^a=\{\alpha,\beta\}$. Assume   $1\in \beta$. As power series of $t$, 
      $$ F(\varkappa^a,t)F(\varkappa^\flat,t)=\sum_{k\ge 0} A(k)\cdot B(k)\cdot t^k$$  where 
    \begin{equation}\label{eq:A(k)}
     A(k)=\prod_{a\in \alpha, b \in \beta}\frac{(a)_k}{(b)_k}
\end{equation}
    and
\begin{equation}\label{eq:B(k)}
    B(k)= F(\varkappa^\flat\star\{\{1-b-k\}_{b\in \beta},\{1-a-k\}_{a\in \alpha}\},1),
\end{equation} which is a terminating series due to the non-positive integer $-k$ among $\{1-b-k\}_{b\in \beta}$.
\end{lemma}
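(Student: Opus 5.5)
The plan is to compute the coefficient of $t^k$ in the Cauchy product directly, factor out the $\varkappa^a$-coefficient at index $k$, and recognize what remains as a terminating hypergeometric sum at argument $1$. Here I read \eqref{eq:A(k)} as $A(k)=\prod_{a\in\alpha}(a)_k\big/\prod_{b\in\beta}(b)_k$, the $k$-th coefficient of $F(\varkappa^a,t)$, and I write $A_{\varkappa^\flat}(j)=\prod_i (a_i)_j/(b_i)_j$ for the coefficients of $F(\varkappa^\flat,t)$. Then
$$F(\varkappa^a,t)F(\varkappa^\flat,t)=\sum_{k\ge0}\Big(\sum_{j=0}^{k}A(k-j)\,A_{\varkappa^\flat}(j)\Big)t^k ,$$
so it suffices to show $\sum_{j=0}^k A(k-j)A_{\varkappa^\flat}(j)=A(k)B(k)$.

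The key step is the reflection identity for Pochhammer symbols,
$$\frac{(x)_{k-j}}{(x)_k}=\frac{1}{(x+k-j)_j}=\frac{(-1)^j}{(1-x-k)_j},\qquad 0\le j\le k,$$
obtained by reversing the order of the $j$ factors $(x+k-j)\cdots(x+k-1)$. Apply it to every $a\in\alpha$ and every $b\in\beta$. Because $|\alpha|=|\beta|$, the signs $(-1)^j$ cancel, and we get
$$\frac{A(k-j)}{A(k)}=\frac{\prod_{b\in\beta}(1-b-k)_j}{\prod_{a\in\alpha}(1-a-k)_j}.$$
Multiplying by $A_{\varkappa^\flat}(j)$ gives exactly the $j$-th term of the hypergeometric series with upper parameters $\{a_i\}\cup\{1-b-k\}_{b\in\beta}$ and lower parameters $\{b_i\}\cup\{1-a-k\}_{a\in\alpha}$, evaluated at $1$. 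The factor $j!$ is supplied by $b_1=1$ in $\varkappa^\flat$, so the $F$-normalization $A_\varkappa(j)=\prod(a)_j/(b)_j$ matches. This is precisely the $j$-th term of $B(k)$ in \eqref{eq:B(k)}.

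It remains to see that the sum over $j\le k$ agrees with the full series defining $B(k)$. Since $1\in\beta$, the upper parameter $1-1-k=-k$ appears, so $(-k)_j=0$ for $j>k$; hence the series terminates exactly at $j=k$, and the sum above equals $B(k)$.

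The only genuine obstacle is well-definedness. The lower Pochhammer factors $(1-a-k)_j$, for $j\le k$, must not vanish. That is, $a+k-j\neq 0$ for $0<j\le k$, which holds automatically when the entries of $\alpha$ are positive (e.g.\ in $(0,1]$ as in our setting). I would state this mild hypothesis explicitly and otherwise treat the identity as one of formal power series in $t$. With that noted, the proof is a direct coefficient comparison, extending \cite[Lemma 3.5]{HMM1} from length-one $\varkappa^a$ to arbitrary length.
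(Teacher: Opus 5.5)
Your proposal is correct and follows the paper's proof, which is exactly this Cauchy-product coefficient comparison driven by the identity $(u)_{k-r}=(-1)^r\frac{(u)_k}{(1-u-k)_r}$. You fill in the details the paper leaves implicit: reading $A(k)$ as $\prod_{a\in\alpha}(a)_k/\prod_{b\in\beta}(b)_k$, the sign cancellation from $|\alpha|=|\beta|$, and the non-vanishing of the lower parameters.
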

\begin{proof}
   It follows from $(u)_{k-r}=(-1)^r\frac{(u)_k}{(1-u-k)_r},$ see \cite{Win3X}.
\end{proof}

We give two applications of this lemma. Recall that $\varkappa_2(d)=\{\{\tfrac{1}{d},\tfrac{d-1}{d}\},\{1,1\}\}$. 
\begin{example}\label{eg:1}  The product of the series $  (1-t)^{-1/2}F(\varkappa_2(2),t)$,  can be written as a power series in $t$ in two ways:
\begin{align*}
     (1-t)^{-1/2}&F(\varkappa_2(2),t)=\pFq10{\tfrac12}{}t F(\varkappa_2(2),t)\\
     =&\sum_{k\ge 0}\frac{(\tfrac12)_k}{k!}\cdot \pFq32{\tfrac12&-k& -k }{&\tfrac12-k&\tfrac12-k}1 t^k\\
     =&\sum_{k\ge 0}\frac{(\tfrac12)_k^2}{k!^2}\cdot \pFq32{\tfrac12&\tfrac12& -k }{&1&\tfrac12-k}1 t^k.
\end{align*}
\end{example}
\begin{example}\label{eg:2}
One presentation of the left-hand side of the Clausen formula \eqref{eq:Clausen} is   
 $$
   F(\varkappa_2(d),z)^2=\sum_{k\ge 0}\frac{(\tfrac1d)_k(\tfrac{d-1}d)_k}{k!^2}\cdot \pFq43{\tfrac1d&\tfrac{d-1}d&-k& -k }{&1&\tfrac1d-k&\tfrac{d-1}d-k}1 z^k. 
$$
\end{example}

We recall the necessary facts about commutative formal group laws (CFGL) below. This statement is a specialization of \cite[Theorem A.9]{Stienstra-Beukers}. 
For a fixed prime $p$, let $\Z_p$ be the ring of $p$-adic integers. 
\begin{theorem}[See \cite{Stienstra-Beukers}]\label{sb}
  Assume $R$ is a $\Z_p$-algebra and $\sigma$ an automorphism of $R$ so that $\sigma(x)\equiv x^p\pmod {pR}.$ Let $$\omega(u)=\sum_{n=1}^\infty c_nu^n\frac{du}u \quad \text{and} \quad u(q)=\sum_{n=1}^\infty b_n q^n, $$ 
  where $b_n,c_n\in R$ so that $b_1$ is invertible in $R$.  Further, write $$\omega(u(q))=\sum_{n=1}^\infty a_nq^n\frac{dq}q. $$ Then there exist $s_1,s_2\in R$ so that for all $m$ and $v$ positive integers, 
  $$
    a_{mp^v}-s_1\sigma(a_{mp^{v-1}})+s_2p\sigma^2(a_{mp^{v-2}})\equiv 0\pmod {p^v}
  $$ 
  if and only if $$c_{mp^v}-s_1\sigma(c_{mp^{v-1}})+s_2p\sigma^2(c_{mp^{v-2}})\equiv 0\pmod {p^v}.$$ 
\end{theorem}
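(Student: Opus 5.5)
This is the Stienstra–Beukers criterion: a congruence recursion holds for the coefficients of a differential $\omega(u)$ if and only if it holds for the coefficients of its pullback $\omega(u(q))$ under a change of variable $u(q)$ with invertible linear term. The plan is the standard formal-group argument. Write $\ell(u)=\sum_{n\ge1}\frac{c_n}{n}u^n$ for the formal logarithm attached to $\omega$, so that $d\ell=\omega$. Substituting $u=u(q)$ gives $\ell(u(q))=\sum_{n\ge1}\frac{a_n}{n}q^n$, since $q\,\frac{d}{dq}$ of the composite reproduces the coefficients of $\omega(u(q))$. The statement is then symmetric in the roles of $u$ and $q$. Because $b_1\in R^\times$, the series $u(q)$ has a compositional inverse $q(u)\in R[[u]]$, and $\omega(u)=\tilde\omega(q(u))$ where $\tilde\omega=\sum a_nq^n\,dq/q$. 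It therefore suffices to prove one direction, namely that the congruences for the $c$'s imply those for the $a$'s.

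Set $\mathcal L(u)=\sum_n \frac{x_n}{n}u^n$ and consider the operator
\[
\Lambda:=1-s_1 p^{-1}\,\sigma_*\circ[p]^* + s_2\,p^{-1}\,\sigma^2_*\circ[p^2]^* .
\]
Here $[p]^*$ is $u\mapsto u^p$, and $\sigma_*$ applies $\sigma$ to the coefficients. The coefficient of $u^{mp^v}$ in $\Lambda\mathcal L$ equals
\[
\frac{1}{mp^v}\bigl(x_{mp^v}-s_1\sigma(x_{mp^{v-1}})+s_2p\,\sigma^2(x_{mp^{v-2}})\bigr),
\]
with the usual convention that terms with negative exponent vanish. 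So the recursion on $x=(c_n)$ modulo $p^v$, for all $m,v$, is equivalent to the integrality statement $\Lambda\ell(u)\in R[[u]]$, after arguing separately the harmless case $p\nmid m$, $v=0$.

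Next I will apply Dwork's functional-equation lemma, in the form valid for a $\Z_p$-algebra $R$ equipped with a Frobenius lift $\sigma$. Its key input is the following fact: if $g(u)\in R[[u]]$ with $g(0)=0$, then
\[
\ell(g(u))-\ell(g^{\sigma}(u^p))\cdot(\ldots)
\]
behaves well. More precisely, the condition $\ell^{\sigma}(u^p)$-twisted integrality is preserved under substitution, because
\[
g^{\sigma}(u^p)\equiv g(u)^p \pmod{pR[[u]]},
\]
and hence
\[
\frac{1}{p^{k}}\bigl(h(g^{\sigma}(u^p))-h(g(u)^p)\bigr)
\]
stays integral for $h$ with coefficients of the shape $y_n/n$ with $p$-adic control. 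Applying this with $g=u(q)$ shows that $\Lambda\ell\in R[[u]]$ implies $\Lambda(\ell\circ u)\in R[[q]]$. That implication is exactly the recursion on the $a_n$.

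The main obstacle is this middle step: verifying that integrality of $\Lambda\ell$ is stable under composition with an arbitrary power series. The $s_2p\sigma^2$ term is the delicate part, since one must check that the error $\ell^{\sigma^2}(g^{\sigma^2}(u^{p^2}))-\ell^{\sigma^2}(g(u)^{p^2})$ is divisible enough to absorb the factor $p^{-1}$. I would handle it by the Hazewinkel-type estimate: if $x\equiv y \pmod{p^{j}}$, then $x^{p^i}\equiv y^{p^i}\pmod{p^{i+j}}$, so that $\frac{1}{p^i}\bigl((g^{\sigma})^{p^{i}}(u^p)-g(u)^{p^{i+1}}\bigr)$ is integral. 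Applying this term by term, with the standard bookkeeping, is the step I would write out carefully. The remaining steps are formal.
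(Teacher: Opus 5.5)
Your argument is correct. It is the standard formal-logarithm (functional-equation) proof of the Stienstra--Beukers criterion, which the paper does not reprove but cites and later invokes as the ``CFGL isomorphism''. The one hypothesis you should state explicitly is that $R$ has no $p$-torsion: you work in $R\otimes\Q$ and identify $\frac{1}{mp^v}y\in R$ with $y\in p^vR$. This holds for the paper's $R=\Z_p[C_1(d)^{1/e},\beta_{d,e,j}]$.

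Your ``main obstacle'' is not one. The difference $\Lambda(\ell\circ g)-(\Lambda\ell)\circ g$ only involves terms $\ell^{\sigma^i}(\alpha)-\ell^{\sigma^i}(\beta)$ with $\alpha\equiv\beta\pmod{pR[[q]]}$; for $i=2$ this holds because $g^{\sigma^2}(q^{p^2})\equiv g(q)^{p^2}\pmod p$. Since $\alpha^n\equiv\beta^n\pmod{p^{1+v_p(n)}}$, each such difference lies in $pR[[q]]$, and this absorbs the factor $p^{-1}$ in the $s_2$ term exactly as in the $s_1$ term.
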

Note that this statement is a slight correction of a typo in Proposition 3.4 of \cite{HMM2}, where $\sigma$ is applied to $\alpha_p$ and $\beta_p$ instead of the elements of the power series itself.  

\subsection{CFGL in the current setting}\label{ss:CFGLhere}
For the remainder of this section, we assume $d,e$ as \Cref{thm:H-version}  are fixed, $p\nmid de$ 
is a fixed prime, and 
\begin{equation}\label{eq:R}
  R=\Z_p[C_1(d)^{1/e}, \beta_{d,e,j}: j\in  (\Z/e\Z)^\times],  
\end{equation}where $\beta_{d,e,j}$ are the numbers as in Proposition  \ref{lemma: constructingforms}.  Let  $\sigma$ be an automorphism of $R$ such that for each $a\in R$, $\sigma(a)\equiv a^p\pmod{pR}.$    
Let $R^\times$ be the invertible elements in $R$. 
In our setting, the power series $\omega(t_d(q))$ is the Fourier series for the modular form $g_{d,e}$  in $R[[q]]$, while $\omega(u^e)$ is the $u$-expansion  
of the same series in $R[[u]]$. The Fourier coefficients of 
any Hecke eigenform satisfy the Hecke recursions for all primes $p\nmid de$  so that the above theorem is applicable. 

Recall that for $j\in(\Z/M(d,e)\Z)^\times$, \begin{equation}\label{eq:jHD}
    j\cdot \varkappa(d,e)=\{\{\tfrac{j}{d},1-\tfrac{j}{d},\tfrac{j}{e},1-\tfrac{j}{e}\},\{1,1,1,1\}\}=\{\{\tfrac{1}{d},\tfrac{d-1}{d},\tfrac{j}{e},\tfrac{e-j}{e}\},\{1,1,1,1\}\},
 \end{equation} is a conjugate datum of $\varkappa(d,e)$.

Below we apply the CFGL 
to relate $q$ and the $t_d$ coefficients of $g_{d,e}$.
\begin{proposition}\label{prop:key-modp} Assume $d\in\{2,3,4\}$ and $e\mid M_d$. Let $t_d$ be as in Table \ref{tab:bigger triangle}, $C_1(d)$ be its leading coefficient, $g_{d,e}=\sum_{i\in(\Z/e\Z)^\times}\beta_{d,e,i}\BG_d(i/e)$ be any normalized Hecke eigenform in $\Gfam_{d,e}$ as in Proposition \ref{lemma: constructingforms}.   Let $p\nmid de$   be a prime and $R$ as in \eqref{eq:R}.  
Then we have the following conclusions regarding the truncated series:
\begin{enumerate}
    \item for any prime $p\equiv j\pmod e$ where $j\in(\Z/e\Z)^\times$ such that $\beta_{d,e,j}\in R^\times$, 
\begin{equation}\label{eq:4F3-gde}
 \pFq43{\frac1e&\frac{e-1}e&\frac1d&\frac{d-1}d}{&1&1&1}{1}_{p-1}\equiv a_p({g_{d,e}})\cdot  \left({-C_1(d)/4}\right)^{(1-jp)/e} \cdot \beta_{d,e,j} \pmod {pR};
\end{equation}

\item  if $p\equiv 1\pmod {e}$, then for each $j\in(\Z/e\Z)^\times$,
\begin{equation}\label{eq:4F3-gde-2}
 \pFq43{\frac je&\frac{e-j}e&\frac1d&\frac{d-1}d}{&1&1&1}{1}_{p-1}\equiv 
 a_p({g_{d,e}})\cdot \left({-C_1(d)/4}\right)^{j(1-p)/e} \pmod {pR}.
\end{equation} 
\end{enumerate}

\end{proposition}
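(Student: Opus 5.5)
Our plan is to apply Theorem \ref{sb} to the differential $\omega=g_{d,e}\,\frac{dq}{q}$, written in two ways. Write $\BG_d(i/e)(\tau)$ as a series in a uniformizer $u$ with $u^e=\tfrac{4}{C_1(d)}\tfrac{z_d}{1-z_d}\cdot(\text{unit})$; equivalently, use the parameter $t=4z_d(1-z_d)$, essentially $t_d$ up to normalization. Via \eqref{eq:Zagier} and the factorization $\Theta z_d\,\frac{dq}{q}=dz_d$, each $\BG_d(i/e)\frac{dq}{q}$ becomes $\bigl(\tfrac{4}{C_1(d)}\bigr)^{i/e}\bigl(\tfrac{z}{1-z}\bigr)^{i/e}F(\varkappa_2(d),z)^2\,\tfrac{dz}{z}$, with $z=z_d$. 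Substitute $x^e=-\tfrac{4}{C_1(d)}\cdot\tfrac{z}{1-z}$, so that $\tfrac{z}{z-1}$ is a constant multiple of $x^e$. This gives a series in $x$ with coefficients in $R$, and the coefficient of $x^{n}$ is nonzero only when $n\equiv i\pmod e$. Since $x$ has leading $q$-coefficient a unit, the hypotheses of Theorem \ref{sb} hold. Applying the theorem with $s_1=a_p(g_{d,e})$, $s_2=\xi(p)p^2$, $m$ suitable and $v=1$ yields
\[
c_{mp}\equiv a_p\,\sigma(c_m)\pmod{pR},
\]
where $c_n$ is the $x$-expansion coefficient of $\omega$.

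The key step is to identify $c_n$ hypergeometrically. Expand $\bigl(\tfrac{z}{z-1}\bigr)^{i/e}F(\varkappa_2(d),z)^2\tfrac{dz}{z}$ in powers of $w=\tfrac{z}{z-1}$. Pfaff's transformation expresses $F(\varkappa_2(d),z)=(1-z)^{-1/d}F(\{\tfrac1d,\tfrac1d\},\{1\};w)$, and $dz/z=\frac{dw}{w(1-w)}$. The resulting coefficient of $w^{k+i/e}$ is a terminating sum, which we compute with Lemma \ref{lem:HG-product}; in the spirit of \eqref{eq:star-at1} it is a Beta-type integral. Choose $k$ with $k+i/e\equiv$ the $p$-adic expansion of a rational, namely $k=(p j'-i)/e$ type indices. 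Then the terminating sum reduces modulo $p$ to the truncated $_4F_3$ at $1$ with top entries $\tfrac{i}{e},\tfrac{e-i}{e},\tfrac1d,\tfrac{d-1}d$. The mechanism is the $p$-adic Gamma comparison of \cite{LR}, as in \cite[\S3.6]{HMM1}: $(a)_k/k!$ with $k=\langle -a\rangle_p$-type index converts $\frac{(-k)_r}{(1-a-k)_r}$ into $\frac{(a')_r}{(1)_r}$ modulo $p$.

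For (1), take $n=p$ on the $x$ side. Since $p\equiv j\pmod e$, only $\BG_d(j/e)$ contributes to $c_p$, giving $\beta_{d,e,j}\,(-C_1(d)/4)^{(jp-1)/e}\cdot(\text{coefficient})$; meanwhile $c_1=1$ by normalization. The coefficient has $e\cdot k+j=p$ and reduces to the truncated $_4F_3$ with parameters $\{\tfrac je\cdot p\}$, which is the datum $j\cdot$ conjugated back, i.e.\ $\tfrac1e,\tfrac{e-1}{e}$, because $jp\equiv1\pmod e$ up to the symmetry $a\leftrightarrow1-a$. Inverting the constant powers, which is legitimate since $\beta_{d,e,j}\in R^\times$ and $C_1(d)$ is a $p$-unit, gives \eqref{eq:4F3-gde}.

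For (2), with $p\equiv1\pmod e$, take $m=j$: we get $c_{jp}\equiv a_p\sigma(c_j)$. Both $c_{jp}$ and $c_j$ come from the same $\BG_d(j/e)$ piece, so $\beta_{d,e,j}$ cancels, once we note $\sigma(\beta_{d,e,j})\equiv\beta_{d,e,j}$ appropriately. Alternatively we apply the argument to $\BG_d(j/e)$ projected, since $c_j=\beta_{d,e,j}$ times a unit power. The truncated series arising at index $(jp-j)/e$ is the $_4F_3$ with $\tfrac je,\tfrac{e-j}{e}$, and the power $(-C_1/4)^{j(1-p)/e}$ comes out as the ratio of normalizations.

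We expect the main obstacle to be the precise modulo-$p$ reduction of the terminating sum from Lemma \ref{lem:HG-product} to the truncated $_4F_3$, including the sign and Pochhammer bookkeeping. We will handle it by writing $k=(p-1)\tfrac{j}{e}$-type indices and using $(1-a-k)_r\equiv(1-a+\tfrac{(1-p)j}{e})_r\equiv(a^{*})_r\pmod p$, as in Example \ref{eg:2}.
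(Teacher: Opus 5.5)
Your overall strategy (Theorem \ref{sb} with $v=1$) matches the paper's. The gap is in the step that is supposed to produce the truncated ${}_4F_3$: it does not work in the variable you compute in.

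You treat a uniformizer with $u^e\propto\frac{z}{1-z}$ and the parameter $t$ as interchangeable, and then expand in $w=\frac{z}{z-1}$. By Pfaff and then Euler, your expression becomes, up to branch,
\[
\BG_d(i/e)\frac{dq}{q}=\Bigl(-\frac{4}{C_1(d)}\Bigr)^{i/e}w^{i/e}\,{}_2F_1\bigl(\tfrac1d,\tfrac1d;1;w\bigr)\,{}_2F_1\bigl(1-\tfrac1d,1-\tfrac1d;1;w\bigr)\frac{dw}{w}.
\]
So $i/e$ enters only through a pure monomial, and the coefficient of $w^{k+i/e}$ does not depend on $i/e$ at all.

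Lemma \ref{lem:HG-product} gives this coefficient as $\frac{(1/d)_k^2}{k!^2}\,{}_4F_3(1-\tfrac1d,1-\tfrac1d,-k,-k;1,1-\tfrac1d-k,1-\tfrac1d-k;1)$. Reducing termwise at $k\equiv-\tfrac ie\pmod p$ gives a ${}_4F_3$ with upper entries $1-\tfrac1d,1-\tfrac1d,\tfrac ie,\tfrac ie$ and lower entries $1,1-\tfrac1d+\tfrac ie,1-\tfrac1d+\tfrac ie$. That is not ${}_4F_3(\tfrac ie,1-\tfrac ie,\tfrac1d,1-\tfrac1d;1,1,1;1)_{p-1}$. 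In some cases the termwise reduction is not even defined: for $d=3$, $e=2$, $p\equiv1\pmod 6$, $k=\frac{p-1}{2}$, the prefactor $(\tfrac13)_k^2/k!^2$ vanishes mod $p$ while the denominators $(\tfrac23-k)_r$ pick up factors of $p$.

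The congruence you want does in fact hold. But it needs an ingredient you do not supply, either a terminating Whipple-type transformation or coordinate-independence of the Cartier operator to pass back to $t$, and the latter just reduces to the paper's computation.

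The paper expands in $t=t_d$ via \eqref{gdei}, so that $a=j/e$ sits inside the algebraic factor $t^{a}F(\varkappa_{cont}(a),t)$. Lemma \ref{lem:HG-product} then gives the coefficient as $A_a(k)B_a(k)$:
\begin{itemize}
\item In the terminating ${}_5F_4$ $B_a(k)$ at $k=[-a]_0$, the entries $-k,1-2a-k$ over $1-a-k,\tfrac12-a-k$ reduce to $a,1-a$ over $1,\tfrac12$, and the $\tfrac12$ cancels the one in $\varkappa_3(d)$.
\item $A_a(k)=\frac{(2a)_{2k}}{4^k(2a)_k\,k!}\equiv(-\tfrac14)^k$ by the $p$-adic Gamma computation of Lemma \ref{lem:A(k)}. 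This is where the $-\tfrac14$ in $-C_1(d)/4$ comes from.
\end{itemize}

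Part (1) has a second problem. With $m=1$, the coefficient $c_p$ comes from $\BG_d(j/e)$ at index $k=(p-j)/e=[-j/e]_0$. Even once identified correctly, it gives a congruence for the conjugate series ${}_4F_3(\tfrac je,1-\tfrac je,\tfrac1d,1-\tfrac1d;1,1,1;1)_{p-1}$, with exponent $(p-j)/e$ rather than $(jp-1)/e$. For $e\in\{8,12,24\}$ and $j\neq\pm1$ this is a different statement. The symmetry $a\leftrightarrow1-a$ does not turn $\tfrac je$ into $\tfrac1e$, and $\{\tfrac je p\}$ is not a parameter of the series that appears.

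The paper takes $m=j$ instead. Since $jp\equiv j^2\equiv1\pmod e$, $c_{jp}$ comes from $\BG_d(1/e)$ at index $[-1/e]_0=(jp-1)/e$, while $c_j$ is the leading coefficient of $\beta_{d,e,j}\BG_d(j/e)$. This produces the $\tfrac1e$-series together with the factors $\beta_{d,e,j}$ and $(-C_1(d)/4)^{(1-jp)/e}$ in \eqref{eq:4F3-gde}.

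Finally, $\sigma(\beta_{d,e,j})\equiv\beta_{d,e,j}$ is false in general. Since $\beta_{d,e,j}^2\in\Z$, one has $\sigma(\beta)\equiv\beta^p\equiv\pm\beta$, which is why Lemma \ref{lem:cfgl} carries the factor $\beta_{d,e,j}^{1-p}$. Your alternative of working with $\BG_d(j/e)(e\tau)$ alone does avoid this in part (2): for $p\equiv1\pmod e$ it is itself a $T_p$-eigenvector with integer coefficients.
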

\begin{example} For $d=2$, $3$, or $4$,   and each prime $p>3$
$$
\pFq43{\frac12&\frac12&\frac1d&\frac{d-1}d}{&1&1&1}{1}_{p-1}\equiv  \left(\frac{-C_1(d)}p\right)a_p({g_{d,2}}) 
\pmod p. 
$$
\end{example}

\begin{remark}
\begin{enumerate}
\item Though \eqref{eq:4F3-gde} and \eqref{eq:4F3-gde-2} look similar,  they contain different information. While \eqref{eq:4F3-gde} applies to almost primes,  to be used in the proof of \Cref{thm: Galois  version}, \eqref{eq:4F3-gde-2} is about the compatibility among truncated series for conjugate data $j\cdot \varkappa(d,e)$, which will be needed for the proof of \Cref{thm:H-version}. 
It is coherent with the space $\Gfam_{d,e}$ being stable under each Hecke operator $T_p$ when $p\nmid de$.
    \item Let $\varepsilon_{d,e}$ be as in Theorem \ref{thm:H-version}. When $p\equiv 1\pmod {M(d,e)}$, 
    \begin{equation}\label{eq:chi(d,e)-congruence}
      \varepsilon_{d,e}(p)=\pm 1\equiv (-C_1(d)/4)^{(1-p)/e} \pmod{p}. 
   \end{equation} This can be checked case-wise. For example when $d=2$, $C_1(2)=-64$, $M_2=8$, so $(-C_1(2)/4)^{(1-p)/8}=16^{(1-p)/8}=2^{(1-p)/2}\equiv \left(\tfrac{2}{p}\right)\pmod{p}$. Similarly when $d=4$, $M_4=24$ and $C_1(4)=256$, so $(-C_1(4)/4)^{(1-p)/24}=(-2^6)^{(1-p)/24}=(-1)^{(1-p)/24}\cdot 2^{(1-p)/4}=\pm1 $ for each prime $p\equiv 1\pmod{24}$. The $d=3$ case can be checked similarly.   

\end{enumerate}

\end{remark}

The remainder of this section is devoted to the proof of Proposition \ref{prop:key-modp} and we continue to assume $p$ and $R$ are fixed as before.  First recall that the form $g_{d,e}$ is defined as a linear combination of $\BG_d$ functions as in Proposition \ref{lemma: constructingforms}.  
After choosing the principal branch of root, we fix as an analytic function that
\begin{equation}\label{gdei}\BG_d(j/e)\frac{dq}{q}=C_1(d)^{-j/e}t_d^{j/e}F(\varkappa_{cont}(j/e),t_d)F(\varkappa_3(d),t_d)\cdot \frac 1{t_d}\frac{qdt_d}{dq}.
\end{equation}

 We take $t=t_d$ throughout this section. From the series expansions for hypergeometric series and the modular forms $\BG_d(j/e)$ in \Cref{constructingMF}, we have that 
$C_1(d)^{j/e}\BG_d(j/e)\frac{dq}{q}  \in \Q[[t^{1/e}]] $ in terms of $t^{1/e}$-series 
and    in terms of  $q$-expansion, it has coefficients in $\Z$. 
As a series in $u=(t/ C_1(d))^{1/e}$, we will write 
$$ \BG_d(j/e)\frac{dq}{q}= e \sum_{n=0}^\infty \tilde c_{j,n}u^{ne+j} \frac{du}{u},\quad \tilde c_{j,n}\in\Z,$$
and
\begin{equation}
    g_{d,e}\frac{dq}q= e\sum_{j\in (\Z/e\Z)^\times}\sum_{n=0}^\infty \beta_{d,e,j}u^{ne+j} {\tilde c_{j,n}} \frac{du}{u}= \sum_{n=0}^\infty c_nu^n \frac{du}u.
\end{equation}The last expression means we call the re-ordered coefficients of $u^n$ to be $c_n$.
These two formal power series obtained from expressing $g_{d,e}$ in $q$ and $u$ respectively now satisfy the hypothesis of Theorem \ref{sb}, and so we have the following Lemma.

\begin{lemma}\label{lem:cfgl}
  Assume $j\in (\Z/e\Z)^\times$ and $\beta_{d,e,j}\in R^\times$, then

$$ c_{jp}\equiv \beta_{d,e,j}^{1-p}a_p(g_{d,e})\sigma(c_j) \pmod {pR}. $$
\end{lemma}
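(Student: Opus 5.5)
We apply Theorem \ref{sb} to the differential $\omega(u)=g_{d,e}\frac{dq}{q}=\sum_{n\ge1}c_nu^n\frac{du}{u}$, where $u=u(q)=(t_d/C_1(d))^{1/e}$, with the chosen branch, has leading $q$-coefficient $1$. Since $t_d=C_1(d)q+\cdots$ has integral coefficients, $u(q)\in R[[q]]$ and $b_1=1$ is invertible. We also need the coefficients $c_n$ to lie in $R$. They are $e\beta_{d,e,j}\tilde c_{j,n}$ with $\tilde c_{j,n}\in\Z$ (as stated before the lemma), and $e$ is a $p$-adic unit since $p\nmid de$.

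On the $q$-side, $a_n=a_n(g_{d,e})$. Because $g_{d,e}$ is a normalized Hecke eigenform with $p$ coprime to the level away from $de$, we have $a_{mp}-a_p a_m+\xi(p)p^3a_{m/p}=0$. We take $s_1=a_p(g_{d,e})$ and $s_2=\xi(p)p^2$. To match the form in Theorem \ref{sb}, note that $\sigma$ fixes the rational integers $a_m$, so $\sigma(a_m)=a_m$, and the congruence modulo $p^v$ holds for all $m,v$. Theorem \ref{sb} then gives the same recursion for the $c_n$. Taking $v=1$ and $m=j$, the term $c_{j/p}$ is absent, so
\[
c_{jp}\equiv a_p(g_{d,e})\,\sigma(c_j)\pmod{pR}.
\]

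It remains to replace $\sigma(c_j)$ by the factor appearing in the lemma. Since $1\le j<e$, the coefficient of $u^j$ comes only from the $n=0$ term of the $\BG_d(j/e)$ piece, so $c_j=e\beta_{d,e,j}\tilde c_{j,0}$. The leading coefficient of $\BG_d(j/e)$ in $u$ is $1$, because both $\BG_d(j/e)$ and $u$ are normalized to have leading $q$-coefficient one, and hence $e\tilde c_{j,0}=1$ after the $du/u$ normalization. Thus $c_j=\beta_{d,e,j}$.

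For $\sigma(\beta_{d,e,j})$, we use $\sigma(x)\equiv x^p \pmod{pR}$, which gives $\sigma(c_j)\equiv\beta_{d,e,j}^p\pmod{pR}$. This yields
\[
c_{jp}\equiv a_p\,\beta_{d,e,j}^{p}\pmod{pR}.
\]
The statement instead has $\beta_{d,e,j}^{1-p}a_p\sigma(c_j)\equiv a_p\beta_{d,e,j}$, so we must reconcile these two forms. I would redo this normalization carefully, determining whether the paper's $c_j$ absorbs a factor, or whether the intended relation uses $\sigma(c_j/\beta_{d,e,j})$ with $\beta_{d,e,j}^2\in\Z$ handled through $\beta^{p-1}\equiv\pm1$. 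The hypothesis $\beta_{d,e,j}\in R^\times$ is exactly what makes $\beta_{d,e,j}^{1-p}$ meaningful. This bookkeeping of units and branch choices, rather than the formal group input, is the main obstacle.
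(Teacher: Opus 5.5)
Your argument breaks at the claim that $\sigma$ fixes the rational integers $a_m$. The coefficients of $g_{d,e}$ are not rational in general. For $n\equiv i\pmod e$ one has $a_n(g_{d,e})\in\beta_{d,e,i}\Z$, and many of the $\beta_{d,e,i}$ in Tables~\ref{G2table}--\ref{G4table} are irrational ($\sqrt{-7}$, $\sqrt{-3}$, $\sqrt{13}$, \dots). Since $\beta_{d,e,i}^2\in\Z$ and $\sigma(x)\equiv x^p\pmod{pR}$, we get $\sigma(\beta_{d,e,i})=\pm\beta_{d,e,i}$, with sign the Legendre symbol $\left(\frac{\beta_{d,e,i}^2}{p}\right)$, and this sign is frequently $-1$. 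So the Hecke recursion is not of the twisted shape $a_{mp}-s_1\sigma(a_m)+\cdots$ with $s_1=a_p$. Your congruence $c_{jp}\equiv a_p\sigma(c_j)\equiv a_p\beta_{d,e,j}^{p}$ is therefore off by exactly the sign $\beta_{d,e,j}^{p-1}$, and it is in fact false.

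Here is a counterexample. Take $\Gfam_{2,8}$, $j=3$, $\beta_{2,8,3}=-2\sqrt{-7}$, and a prime $p\equiv 1\pmod 8$ with $\left(\frac{-7}{p}\right)=-1$ (e.g.\ $p=17$) and $p\nmid a_p$. Then $T_p$ preserves exponents modulo $8$, so each $\BG_2(i/8)(8\tau)$ is itself a $T_p$-eigenform with eigenvalue $a_p\in\Z$. Theorem~\ref{sb} over $\Z_p$ with $\sigma=\mathrm{id}$ gives $c_{3p}\equiv a_pc_3$. Your congruence instead gives $c_{3p}\equiv a_p\sigma(\beta_{2,8,3})=-a_pc_3$. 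The mismatch you flagged at the end is thus not a normalization or branch issue; it is produced by this step, and the proposal stops short of the statement.

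The missing idea, which is what the paper does, is to account for the Frobenius twist on the $q$-side before invoking the formal group law. Start from the $v=1$ Hecke congruence $a_{jp}\equiv a_pa_j\pmod{pR}$ with $a_j=\beta_{d,e,j}\in R^\times$. Using $\sigma(a_j)\equiv a_j^p$, write $a_j\equiv\beta_{d,e,j}^{1-p}\sigma(a_j)$. Then $a_{jp}\equiv a_p\beta_{d,e,j}^{1-p}\sigma(a_j)\pmod{pR}$ has the form required by Theorem~\ref{sb}, and you transfer it to the $c_n$. This is where the unit hypothesis is genuinely used. It also shows that, with $c_j=\beta_{d,e,j}$, the lemma is just $c_{jp}\equiv a_pc_j$, the $u$-analogue of $a_{jp}\equiv a_pa_j$. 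The factor $\beta_{d,e,j}^{1-p}\sigma(\cdot)$ only cancels the twist built into Theorem~\ref{sb}.

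To make the transfer airtight, note that Theorem~\ref{sb} wants a single $s_1$ valid for all $m,v$, whereas $\beta_{d,e,j}^{1-p}$ depends on $j$. The fix uses the fact that $u(q)$ has coefficients in $\Z_p$: it involves no $\beta_{d,e,i}$ and no $C_1(d)^{1/e}$, and $p\nmid e$. The functional-equation proof of Theorem~\ref{sb} uses $\sigma(x)\equiv x^p$ only to compare $u(q)^p$ with $u^{\sigma}(q^p)$, and likewise for the inverse series. So the argument runs with $\sigma=\mathrm{id}$. It then transfers the untwisted recursion $a_{mp}=a_pa_m-\xi_{d,e}(p)p^3a_{m/p}$ for all $m$ at once, giving $c_{jp}\equiv a_pc_j$ directly.
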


\begin{proof}  To apply Theorem \ref{sb}, 
take $\chi$ to be the Dirichlet character $\xi_{d,e}$ of $g_{d,e}$ as an analytic function as listed in Tables \ref{G2table}, \ref{G3table}, and \ref{G4table}. As $g_{d,e}$ is a weight-4 Hecke eigenform for all primes $p\nmid de$, the Hecke recursions imply that for all $m,v\ge 1$ \begin{equation}\label{recurs}
    a_{mp^v}-a_pa_{mp^{v-1}}+\chi(p)p^{3}a_{mp^{v-2}}\equiv 0\pmod {p^v R}.
\end{equation}
When $v=1$, it is reduced to 
\[ a_{mp}-a_pa_{m} \equiv 0 \pmod{pR}.\]
By Proposition \ref{lemma: constructingforms}, for $m=j$, $a_j=\beta_{d,e,j}\in R^\times$.   So 
$$ \sigma(a_j)=\sigma(\beta_{d,e,j})=\sigma(\beta_{d,e,j})a_j/\beta_{d,e,j}. $$ 
Thus the above congruence can be written as 
$$a_{jp}-a_p\beta_{d,e,j}/\sigma(\beta_{d,e,j})\cdot \sigma(a_j)\equiv a_{jp}-a_p(g_{d,e})\beta_{d,e,j}^{1-p}\cdot \sigma(a_j)\equiv 0\pmod{pR}.$$ By applying the CFGL 
isomorphism, it remains true if $a_n$ is replaced with $c_n$.
\end{proof}

Next we draw some conclusions relating to the terminating series of Lemma \ref{lem:HG-product}.   For a fixed prime $p$, let   $\G_p$ be the Morita $p$-adic Gamma function.  For each $a\in\Z_p$, we use $[a]_0$ to denote the first $p$-adic digit of $a$ and   $$a':=(a+[-a]_0)/p$$  the Dwork dash operation on $a$.
  \begin{lemma}\label{lem:mj} Let $e>1$ be a divisor of 24 and $p$ be a prime coprime to $e$. 
      \begin{enumerate}
          \item If $p\equiv j\pmod e$, then $
          [-1/e]_0=(pj-1)/e$.
          \item If $p\equiv 1\pmod e$, then for each $j\in(\Z/e\Z)^\times$, $
          [-j/e]_0=j(p-1)/e$.
      \end{enumerate} 
  \end{lemma}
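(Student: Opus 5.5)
The plan is to verify both parts directly from the definition of the first $p$-adic digit. For $a\in\Z_p$, $[a]_0$ is the unique integer $m\in\{0,1,\dots,p-1\}$ with $m\equiv a\pmod{p\Z_p}$. Since $p\nmid e$, the number $-j/e$ lies in $\Z_p$. So for a candidate integer $m$ it suffices to check three things: $m$ is an integer, $0\le m\le p-1$, and $em\equiv -j\pmod p$. Throughout, $j$ is represented by an integer with $1\le j\le e-1$.

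For part (1), take $m=(pj-1)/e$.

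\emph{Integrality.} We have $pj\equiv j^2\pmod e$. The key arithmetic input is that every unit modulo a divisor $e$ of $24$ satisfies $j^2\equiv 1\pmod e$, because $(\Z/e\Z)^\times$ has exponent dividing $2$ for $e\mid 24$. One sees this from $(\Z/8\Z)^\times\cong(\Z/2)^2$ and $(\Z/3\Z)^\times\cong\Z/2$ together with the Chinese remainder theorem. Hence $e\mid pj-1$.

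\emph{Range.} Since $j\ge1$ and $p\ge2$, we get $m\ge (p-1)/e>0$. Since $j\le e-1<e$, we get $pj-1<pe$, so $m<p$, and as $m$ is an integer, $m\le p-1$.

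\emph{Congruence.} We have $em=pj-1\equiv -1\pmod p$, so $m\equiv -1/e$ in $\Z_p$.

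For part (2), take $m=j(p-1)/e$.

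\emph{Integrality.} This is immediate from $e\mid p-1$.

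\emph{Range.} Since $1\le j\le e-1$, we get $0\le m\le (e-1)(p-1)/e<p-1$.

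\emph{Congruence.} We have $em=j(p-1)\equiv -j\pmod p$, so $m\equiv -j/e$ in $\Z_p$.

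The only step that is not purely mechanical is the integrality in part (1). There the hypothesis $e\mid 24$ is essential, since it guarantees $j^2\equiv1\pmod e$. Everything else follows from uniqueness of the digit in $\{0,\dots,p-1\}$.
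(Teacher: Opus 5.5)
Your proof is correct and follows the paper's argument essentially verbatim. In both parts you exhibit the candidate integer, check that it lies in $\{0,\dots,p-1\}$, and check the congruence modulo $p$. You also make explicit the step the paper compresses into ``as $e\mid 24$'': since $(\Z/e\Z)^\times$ has exponent dividing $2$, one gets $pj\equiv j^2\equiv 1\pmod e$.
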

 \begin{proof}
  (1) As $e\mid 24$, if $p\equiv j\mod e$, then $pj\equiv 1\pmod e$. As $1\le j<e$, $(pj-1)/e\in\Z$ is a positive integer less than $p$, and is congruent to $-1/e$ modulo $p$. Thus $[-1/e]_0=(pj-1)/e$.

  (2) When $p\equiv 1\pmod e$, $j(p-1)/e$ is also a positive integer less than $p$, which is congruent to $-j/e$. Thus $[-j/e]_0=j(p-1)/e$.
 \end{proof} 

To use Lemma \ref{lem:HG-product}, $\varkappa^\flat=\{\{\tfrac12,\tfrac{1}{d},\tfrac{d-1}{d}\},\{1,1,1\}\}$ and $\varkappa^a=\varkappa_{alg}(\tfrac le)=\{\{\tfrac{l}e,\tfrac{l}e+\tfrac{1}{2}\}\},\{\tfrac{2l}e,1\}\}$ so
\begin{equation}\label{eq:Aj}
A_l(k)=\frac{\left(\frac{l}{e}\right)_k\left(\frac{l}{e}+\frac12\right)_k}
{\left(\frac{2l}{e}\right)_k(1)_k}
=\frac{\left(\frac{2l}{e}\right)_{2k}2^{-2k}}
{\left(\frac{2l}{e}\right)_k(1)_k},\end{equation} where the last claim is due to the duplication formula $(a)_k(a+\tfrac{1}{2})_k=(2a)_{2k}2^{-2k}$; and 
\begin{equation}\label{eq:Bj}
B_l(k)=\pFq54{\tfrac12&\tfrac{1}{d}&\tfrac{d-1}{d}&-k&1-\tfrac{2l}e-k}{&1&1&1-\tfrac{l}e-k&1-\tfrac{l}e-\tfrac{1}{2}-k}1.
\end{equation} 
 
\begin{lemma}\label{lem:bip}
Let $p\nmid de$ be a prime and $A_{j}(k)$ as in \eqref{eq:Aj}.   
    \begin{enumerate} 
   \item For $p\equiv j\pmod { e}$, we have  $$c_{jp}\equiv { C_1(d)^{-1/e}}A_1\left(\frac{jp-1}{e}\right)\pfq{4}{3}{\tfrac1e&\tfrac{e-1}e&\tfrac1d&\tfrac{d-1}d}{&1&1&1}{1}_{p-1}\pmod p.$$ 

   \item For $p\equiv 1 \pmod e$ and $j\in(\Z/e\Z)^\times$ such that $\beta_{d,e,j}\in R^\times$, then $$c_{jp}\equiv { \beta_{d,e,j}}{ C_1(d)^{-j/e}}A_j\left(\frac{pj-j}{e}\right)\pfq{4}{3}{\tfrac{j}e&\tfrac{e-j}e&\tfrac1d&\tfrac{d-1}d}{&1&1&1}{1}_{p-1}\pmod p.$$
     \end{enumerate}
\end{lemma}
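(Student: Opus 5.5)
The plan is to read off the $u$-expansion coefficients of $g_{d,e}$ directly from \eqref{gdei} via Lemma~\ref{lem:HG-product}, and then reduce the resulting terminating ${}_5F_4$ modulo $p$ to the truncated ${}_4F_3$. First, write $F(\varkappa_{cont}(j/e),t)F(\varkappa_3(d),t)=\sum_{k\ge0}A_j(k)B_j(k)t^k$ using Lemma~\ref{lem:HG-product}, with $\varkappa^a=\{\{\tfrac je,\tfrac je+\tfrac12\},\{1,\tfrac{2j}e\}\}$ and $\varkappa^\flat=\varkappa_3(d)$, so that $A_j,B_j$ are as in \eqref{eq:Aj} and \eqref{eq:Bj}. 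Next, substitute $t=C_1(d)u^e$, so that $dt/t=e\,du/u$ and $t^{j/e}=C_1(d)^{j/e}u^j$. This gives
\[
\BG_d(j/e)\tfrac{dq}{q}=e\,C_1(d)^{-j/e}\sum_{n\ge0}(\text{explicit power of }C_1(d))\,A_j(n)B_j(n)\,u^{ne+j}\tfrac{du}{u},
\]
and I would track the power of $C_1(d)$ carefully to match the normalisation $C_1(d)^{-j/e}$ in the statement; $e$ is a $p$-adic unit. Since the exponents $ne+j$ for different $j\in(\Z/e\Z)^\times$ lie in distinct classes mod $e$, each coefficient $c_N$ of $g_{d,e}$ comes from exactly one summand $\beta_{d,e,j}\BG_d(j/e)$.

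In case (1), $p\equiv j\pmod e$ forces $jp\equiv1\pmod e$, so $c_{jp}$ comes from the summand with index $1$ and $n=(jp-1)/e$. We have $\beta_{d,e,1}=1$ because $g_{d,e}$ is normalized, and this explains the factor $A_1\bigl(\tfrac{jp-1}{e}\bigr)$. In case (2), $c_{jp}$ comes from the index-$j$ summand with $n=(jp-j)/e$, which gives the factor $\beta_{d,e,j}A_j\bigl(\tfrac{pj-j}{e}\bigr)$. In both cases, Lemma~\ref{lem:mj} identifies $n=[-l/e]_0$ with $l=1$, respectively $l=j$. So it remains to prove
\[
B_l(n)\equiv \pFq43{\tfrac le&1-\tfrac le&\tfrac1d&\tfrac{d-1}d}{&1&1&1}{1}_{p-1}\pmod p,\qquad n=[-l/e]_0 .
\]

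For this reduction, write $-n=l/e+p\,(l/e)'$ with $(l/e)'\in\Z_p$. Modulo $p$, the upper parameters $-n$ and $1-\tfrac{2l}e-n$ of $B_l(n)$ become $\tfrac le$ and $1-\tfrac le$, and the lower parameters $1-\tfrac le-n$ and $\tfrac12-\tfrac le-n$ become $1$ and $\tfrac12$. The last of these should cancel the upper $\tfrac12$, leaving the ${}_4F_3$. On the range $0\le k\le n$ I would compare the two sums termwise, using $(x+pm)_k\equiv(x)_k\pmod p$ whenever no factor of $(x)_k$ is divisible by $p$. For $n<k\le p-1$, the truncated ${}_4F_3$ terms contain $(l/e)_k$, which has the factor $l/e+n\equiv0\pmod p$ in its numerator while $k!^3$ is a unit, so those terms vanish mod $p$. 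Meanwhile $B_l(n)$ terminates at $k=n$.

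The main obstacle is the pair $(\tfrac12)_k/(\tfrac12-\tfrac le-n)_k$. For $k>(p-1)/2$, both Pochhammer symbols acquire a factor divisible by $p$, namely at index $i=(p-1)/2$. The two factors are $p/2$ and $p/2+p(l/e)'$, so their ratio is $1/(1+2(l/e)')$ rather than $1$. The same issue can arise for the pair $(1-\tfrac{2l}e-n)_k/(1-\tfrac le-n)_k$. My first attempt would be to pair these $p$-divisible factors explicitly and check whether the resulting unit correction is offset: either the affected terms are already $\equiv0$ because $k>n$ and $(l/e)_k$ kills them, or $n<(p-1)/2$ in the relevant residue classes. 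If neither holds, I would rewrite the Pochhammer ratios as $\G_p$-quotients following \cite{LR} and show that the correction factors multiply to $1$ mod $p$. Failing that, I would show the correction is uniform in $k$ and can be absorbed into the factor $A_l(n)$.
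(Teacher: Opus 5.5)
\textbf{There is a genuine gap: the key termwise reduction is left unresolved.} Your route is the paper's. You read off $c_{jp}$ from \eqref{gdei} via Lemma~\ref{lem:HG-product}, reduce the terminating ${}_5F_4$ termwise modulo $p$, and extend the truncation from $n$ to $p-1$ using $(\tfrac le)_k\equiv 0$ for $k>n$. The paper simply asserts the termwise congruence. You correctly see that it needs an argument for $\tfrac{p-1}2<k\le n$, but you do not give one, and the remedies you list cannot work.

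In both cases $\tfrac le+n=\tfrac{jp}e$, i.e.\ $-n=\tfrac le-p\cdot\tfrac je$ (your sign is off). So the $p$-divisible factors of $(\tfrac12)_k$ and $(\tfrac12-\tfrac le-n)_k$ are $\tfrac p2$ and $p\,\tfrac{e-2j}{2e}$, with ratio $\tfrac{e}{e-2j}$. This is in general not $\equiv 1\pmod p$ and only affects $k>\tfrac{p-1}2$. Consequently:
\begin{itemize}
\item no $\Gamma_p$ bookkeeping makes the corrections multiply to $1$;
\item the correction cannot be absorbed into the fixed factor $A_l(n)$, since it is not uniform in $k$;
\item $n<\tfrac{p-1}2$ fails whenever $j>e/2$ (e.g.\ $d=e=3$, $p=5$, $j=2$ gives $n=3$);
\item the option $k>n$ is vacuous, because $B_l(n)$ terminates at $k=n$.
\end{itemize}
So unless these terms vanish, the termwise comparison fails.

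\textbf{The missing idea: the $d$-parameters kill these terms on both sides.} Since $p\nmid d$, $[-\tfrac1d]_0+[-\tfrac{d-1}d]_0=p-1$. Hence at least one of $(\tfrac1d)_k,(\tfrac{d-1}d)_k$ contains a multiple of $p$ once $k>\tfrac{p-1}2$.
\begin{itemize}
\item Numerator: for $\tfrac{p-1}2<k\le n$, the numerator of the $k$-th term of $B_l(n)$ has $p$-adic valuation at least $2$, one from $(\tfrac12)_k$ and one from the $d$-pair.
\item Denominator: $k!^3(1-\tfrac le-n)_k(\tfrac12-\tfrac le-n)_k$ has valuation exactly $1$. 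The factors $1+i-\tfrac{jp}e$ with $i\le p-2$ are units. The only $p$-divisible factor is $p\,\tfrac{e-2j}{2e}$, and $p\nmid e-2j$. When $e=2$, $n=\tfrac{p-1}2$ and the range is empty.
\end{itemize}
So that term is $\equiv 0$, and so is the matching truncated ${}_4F_3$ term, which contains $(\tfrac1d)_k(\tfrac{d-1}d)_k/k!^4$.

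Your second pair should be $(1-\tfrac{2l}e-n)_k$ against the target's $(1-\tfrac le)_k$; the lower $(1-\tfrac le-n)_k$ never picks up a factor $p$ for $k\le p-1$. Both acquire a multiple of $p$ at the same index $[-(1-\tfrac le)]_0$, and beyond it both terms vanish by the same count. With these observations the termwise reduction, and hence the lemma, goes through.

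\textbf{A separate caution on normalization.} If you literally expand in $u=(t/C_1(d))^{1/e}$, the prefactor of $c_{ne+l}$ is $e\,\beta_{d,e,l}C_1(d)^{n}$, not $\beta_{d,e,l}C_1(d)^{-l/e}$. The normalization in the statement is that of the coefficient of $t^{n+l/e}\tfrac{dt}{t}$ read off from \eqref{gdei}, which is how the paper obtains its factor $C_1(d)^{-1/e}$. So your plan to track the powers of $C_1(d)$ until they match will not close in the $u$-variable.
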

\begin{proof}
When $p\equiv j \pmod e$, $jp \equiv 1 \pmod e$ as $e\mid 24$, hence  $c_{jp}$ comes from the $ u^{1+jp-1}= t^{\tfrac 1e+\tfrac{jp-1}{e}}$  coefficient of $g_{d,e,1}$. Thus the appearance of the $C_1(d)^{-1/e}$ is obvious in light of \eqref{gdei}.  Applying Lemma \ref{lem:HG-product} to the product of hypergeometric series with chosen $\varkappa^\flat$ and $\varkappa^a$, we obtain the $\tfrac{jp-1}{e}$th coefficient of $t$. 

For the terminating series $B_1(\tfrac{jp-1}{e})$, we have 
\begin{multline*} 
    \pFq54{\frac12&\frac1d&\frac{d-1}d&-\tfrac{jp-1}{e}&1-\frac 2e-\tfrac{jp-1}{e}}{&1&1&1-\tfrac{jp-1}{e}-\frac1e&\frac12-\tfrac{jp-1}{e}-\frac1e}{1}\\
    =\pFq54{\frac12&\frac1d&\frac{d-1}d&\frac{1-jp}e&\frac {e-1}e+\frac{jp}e}{&1&1&1-\frac{jp}e&\frac12-\frac{jp}e}{1}_{\tfrac{jp-1}{e}} \equiv 
    \pFq54{\frac1d&\frac{d-1}d&\frac{1}{e}&\frac {e-1}e&\frac12}{&1&\frac12&1&1}{1}_{\tfrac{jp-1}{e}}
   \\= \pFq43{\frac1d&\frac{d-1}d&\frac{1}{e}&\frac {e-1}e}{&1&1&1}{1}_{\tfrac{jp-1}{e}} \equiv 
    \pFq43{\frac1d&\frac{d-1}d&\frac{1}{e}&\frac {e-1}e}{&1&1&1}{1}_{p-1} \pmod p.
\end{multline*}
The last congruence is followed by the fact that 
$$
\frac{(j/M)_k(1-j/M)_k}{k!^2}\equiv 0 \pmod p, \quad \mbox{when }   [-j/M]_0<k\le p-1,
$$
for any integer $M$ coprime to $p$ and $0<j/M<1$.

Similarly, we can derive the second conclusion by looking at $c_{jp}$ comes from $\BG_{d,e}(\tfrac{j}e)$  times the corresponding scalar multiple $\beta_{d,e,j}$.
\end{proof}

We now deal with the congruence property of the $A_j$ term.
\begin{lemma}\label{lem:A(k)} 
    For  each $j\in (\Z/ e\Z)^\times$,  
let $m_j=[-j/e]_0$. 
Then we have:
\begin{enumerate}    
\item   $$A_1\!\left(m_1\right)\equiv (-1/4)^{m_1}  \pmod p.$$
\item If $p\equiv 1\pmod e$, then for each   $j\in (\Z/ e\Z)^\times$,
$$A_j\!\left(m_j\right)\equiv (-1/4)^{m_j}  \pmod p.$$
\end{enumerate}
\end{lemma}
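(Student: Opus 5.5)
Both parts follow from one computation: for any $a\in\Z_p$, writing $m=[-a]_0$, we will show $A(m)\equiv(-1/4)^m \pmod p$. We then specialize to $a=1/e$ for (1) and $a=j/e$ for (2). Since $p\nmid e$, each $j/e$ lies in $\Z_p$ and $m_j=[-j/e]_0$ is well defined with $0\le m_j\le p-1$. Lemma \ref{lem:mj} identifies these digits explicitly, but we only need $m_j\equiv -j/e \pmod p$ and $m_j<p$. In particular, part (2) needs no extra input beyond part (1) applied with $1/e$ replaced by $j/e$.

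First, simplify $A_l(k)$ using the second expression in \eqref{eq:Aj}. The identity $(x)_{2k}=(x)_k\,(x+k)_k$ gives
\[
A_l(k)=4^{-k}\,\frac{(2a+k)_k}{k!},\qquad a=\tfrac{l}{e}.
\]
So everything reduces to evaluating the Pochhammer quotient $(2a+m)_m/m!$ modulo $p$ at $k=m=[-a]_0$.

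Next, use $a\equiv -m\pmod p$ to get, for $0\le i\le m-1$,
\[
2a+m+i\equiv i-m \pmod p .
\]
The residues $i-m$ run over $-m,\dots,-1$, and none is divisible by $p$ because $1\le m-i\le m<p$. Hence
\[
(2a+m)_m\equiv\prod_{i=0}^{m-1}(i-m)=(-1)^m m!\pmod p .
\]
Since $m<p$, the factor $m!$ is a $p$-adic unit, and we may divide by it to obtain $(2a+m)_m/m!\equiv(-1)^m$. Combined with the factor $4^{-k}$, this gives $A(m)\equiv(-1/4)^m\pmod p$. The case $m=0$ is trivial, as both sides equal $1$.

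There is no serious obstacle. The only point needing care is that every factor is $p$-integral and that no factor of $(2a+m)_m$ or of $m!$ is divisible by $p$, so that the congruence can be divided. This holds because $p$ is odd and coprime to $e$, and because $m\le p-1$.
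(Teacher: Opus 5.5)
Your proof is correct, and it is more elementary than the paper's. The paper leaves $A_1(m_1)=(2/e)_{2m_1}4^{-m_1}/\bigl((2/e)_{m_1}\,m_1!\bigr)$ uncancelled and inserts $m_1=(jp-1)/e$ (for $p\equiv j\pmod e$) from Lemma~\ref{lem:mj}. It then converts each Gamma quotient into a $p$-adic Gamma quotient via Lemma~17 of Long--Ramakrishna. Compensating powers of $(2/e)'p$ account for the multiple of $p$ that lies in both $(2/e)_{2m_1}$ and $(2/e)_{m_1}$ when $2m_1>p$. The paper finishes with $\Gamma_p(0)=1$ and the reflection formula $\Gamma_p(x)\Gamma_p(1-x)=\pm1$. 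Part (2) is redone separately: the paper does reach your expression $(2a+m)_m/m!$ there, but again evaluates it through $\Gamma_p$.

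You instead cancel $(2a)_k$ at the outset via $(x)_{2k}=(x)_k(x+k)_k$, which is legitimate since $2a\in(0,2)$. This removes the $p$-divisible factor before it ever appears, and your residue count $2a+m+i\equiv i-m$ replaces the reflection formula. What this buys:
\begin{itemize}
\item One uniform argument with no case distinction. The paper indexes its case by $\delta_{j>1}$, whereas the extra factor occurs exactly when $2j>e$. This is harmless there because the two compensating factors cancel, and it is moot in your approach.
\item No need for Lemma~\ref{lem:mj} or for $e\mid 24$.
\item A stronger statement: (2) holds for every odd prime $p\nmid e$, not only $p\equiv 1\pmod e$, and (1) is just the case $j=1$.
\end{itemize}

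What the paper's route buys in return is an exact identity writing $A(m)$ as a quotient of $p$-adic Gamma values. That is the form one needs to refine such congruences modulo higher powers of $p$, and it fits the $\Gamma_p$ machinery used throughout the EHMM papers.
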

\begin{proof}

We first assume that $p\equiv j\pmod e$  so $m_1=(pj-1)/e$ as in  Lemma \ref{lem:mj}. Thus $\tfrac{2}e+2m_1=\tfrac{2pj}e$.  Using $(a)_k=\frac{\G(a+k)}{\G(a)}$,  \eqref{eq:Aj} can be written as
\begin{equation}\label{eq:Aj(mj)}
A_1\!\left(m_1\right)
=
\frac{
\Gamma\!\left(\frac{2pj}{e}\right)\Gamma\!\left(\frac{2}{e}\right)\Gamma(1)
}{
\Gamma\!\left(\frac{2}{e}\right)
\Gamma\!\left(\frac{2}{e}+m_1\right)
\Gamma\!\left(1+m_1\right)
}
\,2^{-2m_1}.
\end{equation}

To continue, we use the technique in \cite{LR} by Long--Ramakrishna to convert the  Gamma quotients to $p$-adic Gamma quotients from which modulo conclusions can be drawn more systematically. Note that some terms below may need an additional $p$-power  due to $\frac 2e+[ -\frac 2e]_0=p(\frac 2e)'\in p\Z_p$, where $'$ denotes Dwork dash operation which is defined by $a'=(a+[-a]_0)/p$ for each $a \in \Z_p$. This will be only needed when $[ -\frac2e]_0<\frac{jp-1}e$, which corresponds to $\delta_{j>1}$, which is 1 when $j>1$, and else it is 0. If $j=1$, then $[ -\frac2e]_0=\frac{2p-2}e>\frac{p-1}e.$ When $j\ge 2$, $[-\frac2e]_0\le\frac{2p(e-1)-2-ep}e<\frac{p(e-1)-1}e$.

Using Lemma 17 of  \cite{LR}, 
\[
\frac{\Gamma\!\left(\frac{2jp}{e}\right)}{\Gamma\!\left(\frac{2}{e}\right)}
=
(-1)^{2m_1}  \left[\left(\frac 2e\right)'p \right]^{\delta_{j>1}}
\frac{\Gamma_p\!\left(\frac{2jp}{e}\right)}{\Gamma_p\!\left(\frac{2}{e}\right)},
\]

\[
\frac{\Gamma\!\left(\frac{2}{e}\right)}
{\Gamma\!\left(\frac{2}{e}+\frac{jp-1}{e}\right)}
=
(-1)^{m_1}  
\left[\left(\frac 2e\right)'p \right]^{-\delta_{j>1}}
\frac{\Gamma_p\!\left(\frac{2}{e}\right)}
{\Gamma_p\!\left(\frac{2}{e}+\frac{jp-1}{e}\right)},
\]
and
\[
\frac{\Gamma(1)}{\Gamma\!\left(1+\frac{jp-1}{e}\right)}
=
(-1)^{m_1}
\frac{\Gamma_p(1)}{\Gamma_p\!\left(1+\frac{jp-1}{e}\right)}.
\] 
 
Hence
\begin{align*}
A\!\left(m_1\right)
&=
\frac{
\Gamma_p\!\left(\frac{2jp}{e}\right)\Gamma_p\!\left(\frac{2}{e}\right)\Gamma_p(1)
}{
\Gamma_p\!\left(\frac{2}{e}\right)
\Gamma_p\!\left(\frac{2}{e}+\frac{jp-1}{e}\right)
\Gamma_p\!\left(1+\frac{jp-1}{e}\right)
}
\,2^{-2m_1}\\&=-2^{-2m_1}\frac{\Gamma_p\!\left(\frac{2jp}{e}\right)}{\Gamma_p\!\left(\frac{2}{e}+\frac{jp-1}{e}\right)
\Gamma_p\!\left(1+\frac{jp-1}{e}\right) },
\end{align*} 
and by the reflection formula of  $p$-adic Gamma functions, 

\begin{equation*}
A\!\left(m_1\right)\equiv -2^{-2m_1}\frac{\Gamma_p\!\left(0\right)}{\Gamma_p\!\left(\frac{1}{e}\right)\Gamma_p\!\left(1-\frac{1}{e}\right) }\equiv -2^{-2m_1} (-1)^{[1/e]_0}
\equiv (-1/4)^{\frac{jp-1}{e}}  \pmod p.
\end{equation*}
We now handle the  case when $p\equiv 1\pmod e$ and $j$ is any element in $(\Z/e\Z)^\times$. By Lemma \ref{lem:mj}, $m_j=j(p-1)/e$ and $\tfrac{2jp}{e}-(\tfrac{jp}{e}+m_j)=m_j\in\Z$. We simplify \eqref{eq:Aj(mj)} as
\[
    A_j\!\left(m_j\right)
=\frac{
\Gamma\!\left(\frac{2pj}{e}\right)\Gamma\!\left(\frac{2j}{e}\right)\Gamma(1)
}{
\Gamma\!\left(\frac{2j}{e}\right)
\Gamma\!\left(\frac{2j}{e}+m_j\right)
\Gamma\!\left(1+m_j\right)
}
\,2^{-2m_j}=
\frac{
\Gamma\!\left(\frac{2jp}{e}\right)\Gamma(1)
}{
\Gamma\!\left(\frac{2j}{e}+m_j\right)
\Gamma\!\left(1+m_j\right)
}.\\
\,2^{-2m_j}. \]
As there is no $p$ multiple either within the set $\{\tfrac{2j}{e}+m_j, \tfrac{2j}{e}+m_j+1,\cdots,\tfrac{2jp}e-1\}$ nor $\{1, 2,\cdots,  m_j\}$ of the same cardinality,
\[
    A_j\!\left(m_j\right)
=\frac{
\Gamma_p\!\left(\frac{2jp}{e}\right)\Gamma_p(1)
}{
\Gamma_p\!\left(\frac{2j}{e}+m_j\right)
\Gamma_p\!\left(1+m_j\right)
}
\,2^{-2m_j}
\equiv (-1/4)^{m_j} \pmod p. 
\]
This completes the proof. 
\end{proof}

Finally, we put all of the pieces together.

\begin{proof}[Proof of Proposition \ref{prop:key-modp}] 
We first assume $p\equiv j\pmod e$. By definition, $c_j$ is the $j$th coefficient in the $t$-expansion of $g_{d,e}$, and thus the first coefficient of $\beta_{d,e,j}\cdot \BG_{d,e}(\tfrac{j}e)$.  which is $C_1(d)^{-j/e}$. 
Hence $\sigma(c_j)=\sigma(\beta_{d,e,j})C_1(d)^{-jp/e} \pmod{pR}$. 
By Lemma \ref{lem:cfgl}, $$c_{jp}\equiv a_p\beta_{d,e,j}^{1-p}{ C_1(d)^{-jp/e}}\pmod {pR}.$$ 
We now compute $c_{jp}$ in two different ways.
 \begin{align*}   c_{jp}&\overset{\text{Lemma \ref{lem:cfgl}}} \equiv  \beta_{d,e,j}^{1-p}a_p(g_{d,e}) C_1(d)^{-jp/e}\\& \overset{\text{Lemma \ref{lem:bip}}}\equiv C_1(d)^{-1/e}A_1(m_1) \pFq43{\tfrac1e&1-\tfrac1e&\tfrac1d&1-\tfrac1d}{&1&1&1}{1}_{p-1} \pmod {p}\\& \overset{\text{Lemma \ref{lem:A(k)}}}\equiv C_1(d)^{-1/e}(-1/4)^{(jp-1)/e} \pFq43{\tfrac1e&1-\tfrac1e&\tfrac1d&1-\tfrac1d}{&1&1&1}{1}_{p-1} \pmod {p}.
 \end{align*}    
 Putting together and dividing by $C_1(d)^{-1/e}(-1/4)^{(jp-1)/e}$ we reach the claim.

Similarly, when $p\equiv 1\pmod e$, 
both $c_{jp}$ and $c_j$ are from $\BG_{d,e}(\tfrac{j}e)$. Thus $c_j=C_1(d)^{j/e}\beta_{d,e,j}$ and $\sigma(c_j)\equiv C_1(d)^{jp/e}\beta_{d,e,j}^p\pmod {pR}$. By   Lemma \ref{lem:cfgl}, \[
     c_{jp}\equiv \beta_{d,e,j}^{1-p}a_p(g_{d,e})\sigma(c_j) \equiv \beta_{d,e,j}^{} a_p(g_{d,e})C_1(d)^{jp/e}\pmod{pR}.
\]
By  Lemmas \ref{lem:bip} and \ref{lem:A(k)}, 
\begin{eqnarray*}  
c_{jp} &\equiv& \beta_{d,e,j}C_1(d)^{- j/e}A_j(m_j)\pfq{4}{3}{\tfrac{j}e&\tfrac{e-j}e&\tfrac1d&\tfrac{d-1}d}{&1&1&1}{1}_{p-1}\\
&\equiv& \beta_{d,e,j} C_1(d)^{-j/e} (-1/4)^{m_j}\pfq{4}{3}{\tfrac{j}e&\tfrac{e-j}e&\tfrac1d&\tfrac{d-1}d}{&1&1&1}{1}_{p-1} \pmod{pR}.
\end{eqnarray*}
 Equating both yields the desired claim.
\end{proof}

\section{Modularity}\label{ss:modularity}
\subsection{Hypergeometric Galois representations}
In this section, we prove the second half of \Cref{thm:H-version} and \Cref{thm: Galois version}.  As an immediate corollary of \Cref{prop:key-modp}, we can certify the Hecke eigenform providing modularity of the Galois representation $\eta_{\varkappa(d,e),\ell,1}$ in  \Cref{thm: Galois version}. Note that we assume $\eta_{\varkappa(d,e),\ell,1}$ is semisimple. Let $\Frob_{\wp}$ denote the geometric Frobenius element.

\begin{proposition}[\cite{BCM,KatzESDE}]\label{prop:galois-reps}
    For each $d,e$ as Theorem \ref{thm:H-version}, there is a 3-dimensional Galois representation $\eta_{\varkappa(d,e),\ell,1}$ of $G_{\Q(\zeta_M)}$ such that for each nonzero prime ideal $\wp$ of $\Z[\zeta_M]$ coprime to $de$
    $$\Tr\, \eta_{\varkappa(d,e),\ell,1}(\Frob_\wp)={\mathbf H}(\varkappa(d,e);1;\wp)$$ so that, 
\begin{equation}\label{eq:eta-decomp}
\eta_{\varkappa(d,e),\ell,1}
\cong
\rho_{d,e,\ell}\oplus
\left.\varphi_{d,e}\epsilon_\ell\right|_{G_{\Q(\zeta_M)}},
\end{equation}
where $\rho_{d,e,\ell}$ is two-dimensional, $\epsilon_\ell$ is the cyclotomic character, and $\varphi_{d,e}$ is a finite-order character. Moreover, the two eigenvalues of $\rho_{d,e,\ell}(\Frob_\wp)$ have the same absolute value
$
\left|\Z[\zeta_M]/\wp\right|^{3/2}$
under every archimedean embedding.  

If $\varkappa(d,e)$ is defined over $\Q$, then $\eta_{\varkappa(d,e),\ell,1}$ has an extension, denoted by $\eta_{\varkappa(d,e),\ell,1}^{BCM}$, to $G_\Q$ such that for each $p\nmid de$
$$\Tr \, \eta_{\varkappa(d,e),\ell,1}^{BCM}(\Frob_{\wp})={\mathbf H}(\varkappa(d,e);1;\F_p),$$ { in which the formula given in \cite[Theorem 1.3]{BCM} is used for ${\mathbf H}(\varkappa(d,e);1;\F_p)$.}
\end{proposition}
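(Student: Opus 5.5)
Proposition \ref{prop:galois-reps} is attributed to \cite{BCM,KatzESDE}, so the plan is to assemble known results rather than build anything new. The work splits into existence with the trace formula, and then the splitting \eqref{eq:eta-decomp}.

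\textbf{Existence and traces.} Katz's theory of hypergeometric sheaves \cite{KatzESDE} gives, for a datum of length $4$ with $\alpha\cap\beta=\emptyset$ modulo $\Z$, a lisse $\ell$-adic sheaf on $\mathbb{G}_m\setminus\{1\}$ over $\Z[\zeta_M,1/M]$. Its Frobenius traces are Gauss-sum expressions equal, after the normalization of \eqref{eq:Hpdef}, to $\BH(\varkappa;t;\wp)$. At the singular fiber $t=1$, local monodromy is a pseudo-reflection, so the invariants of the stalk (the middle extension) have rank $3$. This defines $\eta_{\varkappa(d,e),\ell,1}$ as a $3$-dimensional representation of $G_{\Q(\zeta_M)}$. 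The trace identity at $t=1$ is the standard specialization of the Grothendieck--Lefschetz trace formula, taking the semisimplification as stated.

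\textbf{The splitting \eqref{eq:eta-decomp}.} Here $\varkappa(d,e)$ is self-dual: $\alpha=-\alpha$ and $\beta=\{1,1,1,1\}$. Hence the sheaf is orthogonal or symplectic, and the invariants at $t=1$ carry a nondegenerate pairing. The step I expect to need the most care is the weight analysis. The local exponent at $t=1$ is $\gamma=\sum b_j-\sum a_j=2$, an integer, so the monodromy there is a unipotent pseudo-reflection. By the weight--monodromy filtration, the rank-$3$ fiber then contains a weight-$2$ (Tate-type) piece of dimension one, and the rest is pure of weight $3$. Deligne's purity theorem gives the eigenvalue bound $|\Z[\zeta_M]/\wp|^{3/2}$ on the $2$-dimensional complement $\rho_{d,e,\ell}$.

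The $1$-dimensional piece must be a finite-order twist $\varphi_{d,e}\epsilon_\ell$. I would confirm this directly with the character sum. Isolating the $\chi$-term in \eqref{eq:Hpdef} that produces $\pm q$ via $g(\chi)g(\bar\chi)=\chi(-1)q$ gives a contribution $(-1)^{\frac{q-1}{d}+\frac{q-1}{e}}q$. This fits the term appearing in Theorem \ref{thm:H-version}, and semisimplicity then splits it off as a direct summand.

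\textbf{Descent to $G_\Q$.} When $\varkappa(d,e)$ is defined over $\Q$ (i.e.\ $e\in\{2,3,4,6\}$), Beukers--Cohen--Mellit \cite{BCM} rewrite $\BH$ with $\Q$-rational data using Gauss-sum products indexed by the cyclotomic $\gamma$-vector. That formula makes sense for all $p\nmid de$ and agrees with \eqref{eq:Hpdef} when $p\equiv1\pmod M$. Their geometric realization, via explicit varieties defined over $\Q$ (or Katz's sheaf descended using the $\Gal(\Q(\zeta_M)/\Q)$-invariance of the datum), produces the extension $\eta^{BCM}_{\varkappa(d,e),\ell,1}$ with the stated traces. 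Restricting it to $G_{\Q(\zeta_M)}$ recovers $\eta_{\varkappa(d,e),\ell,1}$ by Chebotarev and Brauer--Nesbitt.
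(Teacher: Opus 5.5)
Your plan agrees with the paper's, which proves this proposition purely by citation: Katz \cite{KatzESDE} for the $3$-dimensional representation of $G_{\Q(\zeta_M)}$ and its decomposition, and Beukers--Cohen--Mellit \cite{BCM} for the extension to $G_\Q$ (see also \cite[Theorem~4]{LTYZ}). Your sketches of existence, the rank count at $t=1$, the trace identity, the weight--monodromy filtration and the descent are fine. However, two of the details you add in support of \eqref{eq:eta-decomp} are wrong.

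First, the invariants at $t=1$ do not carry a nondegenerate pairing. Since $\gamma\in\Z$, the local monodromy $T$ at $t=1$ is a transvection. A transvection preserves no nondegenerate symmetric form, so the self-duality is symplectic. Let $V$ be the nearby fiber and $L=\mathrm{Im}(T-1)$. Then $V^{I_1}=L^{\perp}\supset L$, so the form on the $3$-dimensional space $V^{I_1}$ has radical exactly $L$. This is forced anyway, since an alternating form in odd dimension is degenerate.

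Second, and this is the non-formal part of the statement, nothing in your argument shows that the weight-$2$ line $L$ is $\varphi_{d,e}\epsilon_\ell$ with $\varphi_{d,e}$ of finite order. A priori, a weight-$2$ character of $G_{\Q(\zeta_M)}$ could be a Jacobi-sum-type Hecke character with non-parallel infinity type. The check you propose cannot supply this, because no single $\chi$-term of \eqref{eq:Hpdef} equals $\pm q$. For generic $\chi$ each term has absolute value exactly $q^2/(q-1)$. The terms with some $\iota_{\wp}(a_j)\chi$ trivial have absolute value at most $q^{3/2}/(q-1)$. More fundamentally, the trace of one direct summand cannot be read off term by term from the character sum.

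The self-duality does supply the missing input if you apply it to $V$ rather than to $V^{I_1}$. The monodromy operator $N$ identifies $V/V^{I_1}$ with $L\otimes\epsilon_\ell$. The pairing identifies $V/V^{I_1}=V/L^{\perp}$ with $L^{\vee}\otimes\epsilon_\ell^{3}\otimes\delta$, where $\delta$ is the similitude character. Hence $L^{\otimes 2}\cong\epsilon_\ell^{2}\otimes\delta$, and $L$ is a finite-order twist of $\epsilon_\ell$ as soon as $\delta$ has finite order. Otherwise you must import this fact from the literature, as the paper does.

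The value $\varphi_{d,e}(\Frob_\wp)=(-1)^{\frac{p-1}{d}+\frac{p-1}{e}}$ at split primes then comes, as in the paper right after the proposition, from \cite[Theorem~2.3]{HMM1} and the $p$-adic reflection formula \eqref{eq:p-adic-sign}. It does not come from isolating a term of the sum.
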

\begin{proof}
    The proposition follows from the work of Katz \cite{KatzESDE} for the first part and Beukers--Cohen--Mellit \cite{BCM} for the cases defined over $\Q$.  See also Theorem 4 of \cite{LTYZ}.
\end{proof}

To determine the finite-order character  $\varphi_{d,e}$, we apply Theorem~2.3 of \cite{HMM1}, which is applicable because the $\beta$ set of $\varkappa(d,e)$ consists entirely of $1$.  In this case, the sign of the extra term can be computed using the reflection formula for $\G_p(\cdot)$:
\begin{equation}\label{eq:p-adic-sign}
\frac{\Gamma_p({\beta})}{\Gamma_p({\alpha})}
=
\frac{1}{\Gamma_p(1/d)\Gamma_p(1-1/d)\Gamma_p(1/e)\Gamma_p(1-1/e)}
=
(-1)^{[-1/d]_0}(-1)^{[-1/e]_0},
\end{equation}
where, as before, $[\cdot]_0$ denotes the first $p$-adic digit.  Namely, if $\wp$ lies above a prime $p$ that splits completely in $\Q(\zeta_M)$, then
$$
\varphi_{d,e}(\Frob_\wp)
=
(-1)^{[-1/d]_0}(-1)^{[-1/e]_0}.
$$
For such a splitting prime $p$, the Gross--Koblitz formula 
gives an embedding
$$
{\mathbf H}(\varkappa(d,e);1;\wp)\hookrightarrow \Z_p,
$$ for which we adopt the choice used in \cite{HMM1}.
Consequently, given that ${\mathbf H}(\varkappa(d,e);1;\wp)\in\Z_p^\times$, and because of \eqref{eq:eta-decomp}, exactly one of the the three eigenvalues for $\eta_{\varkappa(d,e),\ell,1}(\Frob_\wp)$ embedded into $\Z_p$ is a $p$-adic unit. We denote this unit eigenvalue by $\mu_{\varkappa(d,e),p,1}$.

For $c\in\Z/(M(d,e)\Z)^\times$,  recall that $$c\cdot \varkappa(d,e):=\{\{\tfrac{c}{d},1-\tfrac{c}{d},\tfrac{c}{e},1-\tfrac{c}{e}\},\{1,1,1,1\}\}=\{\{\tfrac{1}{d},1-\tfrac{1}{d},\tfrac{c}{e},1-\tfrac{c}{e}\},\{1,1,1,1\}\}. $$

The conclusions established for $\eta_{\varkappa(d,e),\ell,1}$ remain valid for $\eta_{c\cdot \varkappa(d,e),\ell,1}$.

\begin{lemma}\label{lem:thm1-step1} 
    We assume notation and assumptions as in Theorem \ref{thm:H-version}. Further take $\mu_{\varkappa(d,e),p,1}$ to be the $p$-adic unit root associated to $\varkappa(d,e)$. Then for each $c\in (\Z/M(d,e)\Z)^\times$ and prime $p\equiv 1 \pmod{M(d,e)}$
    \begin{align}\label{eq:CY3-modp3}
    \begin{split}
    {\mathbf H}(c\cdot \varkappa(d,e),1,\F_p)-(-1)^{(p-1)/d+(p-1)/e}p &\equiv F(c\cdot \varkappa(d,e),1)_{p-1} \\&\equiv \mu_{c\cdot \varkappa(d,e),p,1}\pmod {p^2}.
    \end{split}
\end{align}
\end{lemma}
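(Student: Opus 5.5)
The plan is to obtain both congruences in \eqref{eq:CY3-modp3} from \cite[Theorem 2.3]{HMM1}, applied separately to each conjugate datum $c\cdot\varkappa(d,e)$. Since every entry of the $\beta$-set is $1$, the regularity hypotheses of that theorem hold for every conjugate. The first step is to record what it yields for a prime $p\equiv 1\pmod{M(d,e)}$, where the characters $\iota_\wp(a_i)$ are powers of the Teichm\"uller character. By the Gross--Koblitz formula, ${\mathbf H}(c\cdot\varkappa(d,e);1;\F_p)$ becomes a sum of quotients of $p$-adic Gamma values. Modulo $p^2$, the $\chi$ ranging over $\omega^{-k}$ with $0\le k\le p-1$ give two kinds of contributions. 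Summands with no carry give the coefficients $A_{c\cdot\varkappa(d,e)}(k)$, so they sum to $F(c\cdot\varkappa(d,e),1)_{p-1}$ modulo $p^2$. Summands with exactly one carry contribute a single term of size $p$, namely $p\,\Gamma_p(\beta)/\Gamma_p(\alpha)$. All remaining summands are $O(p^2)$.

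The second step computes that sign for the conjugate data. Since $c\cdot\varkappa(d,e)=\{\{\tfrac1d,1-\tfrac1d,\tfrac ce,1-\tfrac ce\},\{1,1,1,1\}\}$, the reflection formula as in \eqref{eq:p-adic-sign} gives $(-1)^{[-1/d]_0+[-c/e]_0}$. We have $[-1/d]_0=(p-1)/d$ and, by Lemma \ref{lem:mj}(2), $[-c/e]_0=c(p-1)/e$. Because $c$ is odd whenever $e$ is even, this sign equals $(-1)^{(p-1)/d+(p-1)/e}$; when $e=3$ the exponent $(p-1)/e$ is even and the sign is again unchanged. This proves the first congruence.

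The third step identifies the unit root. By Proposition \ref{prop:galois-reps}, applied to $c\cdot\varkappa(d,e)$, the trace of Frobenius at $\wp$ splits as $\mu+\mu'+\varphi(\Frob_\wp)p$. Here $\mu\mu'$ has absolute value $p^3$ and $\mu'$ is divisible by $p^2$, since the Hodge numbers for this weight-$3$ Calabi--Yau type datum are $(1,1,1,1)$ with the middle two in the Tate part. The finite-order term $\varphi_{c\cdot\varkappa(d,e)}(\Frob_\wp)$ was identified above with the same sign. Hence $\mu\equiv{\mathbf H}-(-1)^{(p-1)/d+(p-1)/e}p\pmod{p^2}$.

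The main obstacle is justifying $p^2\mid\mu'$, that is, that the non-unit root of $\rho$ has slope $3$ and not $1$ or $2$. The first thing I would try is to take this from Dwork's theory as extended in \cite{HMM1,LLL26}: the unit root $\mu_{c\cdot\varkappa(d,e),p,1}$ is congruent to $F(c\cdot\varkappa(d,e),1)_{p-1}/F(c\cdot\varkappa(d,e),1)_{(p-1)/p}$-type truncations, and with the Dwork-type congruence $F_{p^2-1}\equiv \mu F_{p-1}$ it already satisfies the second congruence modulo $p^2$. If that is unavailable, I would fall back on the Newton-above-Hodge inequality for the crystalline realization of $\rho$.
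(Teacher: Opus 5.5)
Your first congruence comes out exactly as in the paper: you quote \cite[Theorem 2.3]{HMM1} and evaluate the extra term with the reflection formula. You also actually check the $c$-independence of the sign (via $c$ odd for even $e$, and $(p-1)/3$ even for $e=3$), which the paper only asserts.

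For the second congruence your route is different. The paper takes $\mu\equiv F(c\cdot\varkappa(d,e),1)_{p-1}\pmod{p^2}$ (abbreviated $F_{p-1}$) directly from \cite[Theorem 2.3]{HMM1}. There $\mu$ is Dwork's unit root, which is the object later matched with the unit root of $g_{d,e}$ through the formal group. You work on the Galois side instead, writing $\mathbf H(c\cdot\varkappa(d,e);1;\mathbb F_p)-\varphi(\mathrm{Fr}_{\mathfrak p})p=\mu+\mu'$ with $\mu,\mu'$ the eigenvalues of $\rho$. This needs only the $\mathbf H$-versus-$F_{p-1}$ part of the cited theorem, and it gives the stronger $\mu\equiv \mathbf H-(-1)^{\frac{p-1}{d}+\frac{p-1}{e}}p\pmod{p^3}$. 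It has two costs:
\begin{itemize}
\item It proves the lemma for $\mu$ the unit Frobenius eigenvalue of $\eta$, which is the definition given just before the lemma. Identifying this with Dwork's root is a separate comparison, which the paper also takes for granted.
\item The input $\varphi(\mathrm{Fr}_{\mathfrak p})=(-1)^{[-1/d]_0+[-c/e]_0}$ is itself read off from \cite[Theorem 2.3]{HMM1} in the preceding discussion. So your argument is not independent of that theorem.
\end{itemize}

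The step you flag as the main obstacle is easy, but your first two justifications for it do not work.
\begin{itemize}
\item $|\mu\mu'|=p^3$ at archimedean places says nothing about the valuation at the chosen prime above $p$.
\item At $t=1$ the Tate part is one-dimensional, not ``the middle two'' classes. The Hodge numbers $(1,1,1,1)$ enter only through the Hodge--Tate weights $\{0,3\}$ of $\rho$.
\item The Dwork route fails as stated. Dwork's congruences give only $\mu\equiv F_{p-1}\pmod p$. The relation $F_{p^2-1}\equiv \mu F_{p-1}\pmod{p^2}$ yields the mod $p^2$ statement only if you also know $F_{p^2-1}\equiv F_{p-1}^2\pmod{p^2}$, which is exactly the supercongruence input the paper borrows from \cite{HMM1}.
\end{itemize}

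The correct argument is your fallback, made precise:
\begin{itemize}
\item $\rho$ is crystalline at $\mathfrak p$ with Hodge--Tate weights $\{0,3\}$ for every conjugate, since all $\beta$'s equal $1$. The Newton and Hodge polygons share endpoints, so $v_p(\mu\mu')=3$.
\item $\mu$ and $\mu'$ are algebraic integers.
\item $\mu$ is a unit. This is presupposed by the statement, and it holds because $\mathbf H\equiv F_{p-1}$ is a unit multiple of $a_p(g_{d,e})$ by Proposition \ref{prop:key-modp}, hence nonzero mod $p$ under the hypotheses of Theorem \ref{thm:H-version}.
\end{itemize}
Hence $v_p(\mu')=3$. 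Slopes $1$ or $2$ are excluded by the determinant alone.

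A minor point: your Step 1 account of the mechanism is off. For $d=e=2$ all four carry thresholds coincide, so there are no one-carry summands at all, and the $p$-term comes from the $1/(1-p)$ normalization and the first-order $\Gamma_p$-corrections. Only the net congruence from \cite[Theorem 2.3]{HMM1} is used, however, so this does not affect the argument.
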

\begin{proof}
The claim follows from Theorem 2.3 of \cite{HMM1}, the preceding discussion, and the vital role played by the Dwork unit root $ \mu_{c \cdot \varkappa(d,e),p,1}$ associated with the truncated series $F(c\cdot \varkappa(d,e),1)_{p^s-1}$. In particular, the cited Theorem  gives the following two supercongruence conclusions:
    \begin{equation}
        {\mathbf H}(c\cdot \varkappa(d,e),1,\F_p)-(-1)^{(p-1)/d+(p-1)/e}p\equiv \mu_{c \cdot \varkappa(d,e),p,1}\mod {p^2}
    \end{equation}
in which the sign, computed using the reflection formula, is independent of $c$, and
    \begin{equation}
       \mu_{c \cdot\varkappa(d,e),p,1}\equiv F(c\cdot \varkappa(d,e),1)_{p-1}\mod{p^2}.
    \end{equation}

These are equivalent to the claims of the Lemma.  
\end{proof} 
\subsection{Proof of Theorem \ref{thm:H-version}}

\begin{proof}[Proof of Theorem \ref{thm:H-version}]
First, we prove that the claim holds modulo $p^2$ for all conjugates, that is for all $p\equiv 1 \pmod{ M(d,e)}$ \begin{equation}\label{eq:modp^2}
\BH\left(
c\cdot \varkappa(d,e);1;\wp
\right)
\equiv 
\varepsilon_{d,e}(p)  a_p(g_{d,e})
+
(-1)^{\frac{p-1}{d}+\frac{p-1}{e}}p\pmod{p^2},
\end{equation} for $c\in (\Z/M\Z)^\times$. The bridge between these two is the truncated hypergeometric series and Lemma \ref{lem:thm1-step1}. 
By the second claim of  of Proposition \ref{prop:key-modp}, 
\begin{equation}\label{eq:modp}F(c\cdot \varkappa(d,e),1)_{p-1}\equiv \varepsilon_{d,e}(p)\cdot a_p(g_{d,e})\pmod{pR},\end{equation} and since $\varepsilon_{d,e}(p)=\pm 1$, this congruence can be considered mod $p$. Let   $\mu_{d,e,p}$ be the root of $T^2-a_p(g_{d,e})T+\xi_{d,e}(p)p^3$ in $\Z_p^\times$ where $\xi_{d,e}$ is the character of the modular form described in  Lemma \ref{lemma: constructingforms}.    The isomorphism of formal group laws used to prove \eqref{eq:modp} implies that the unit roots $\mu_{c\cdot \varkappa(d,e),p,1}$ and $\varepsilon_{d,e}(p)\mu_{d,e,p}$ are equal for each $c\in (\Z/M(d,e)\Z)^\times$ and $p\equiv 1\mod M$ so that $a_p(g_{d,e})\not\equiv 0\mod p$. Meanwhile, by the theory of normalized Hecke eigenforms,  
{$$
\mu_{c\cdot \varkappa(d,e),p,1} \equiv \varepsilon_{d,e}(p)\mu_{d,e,p}+\varepsilon_{d,e}(p)\xi_{d,e,p}(p)p^3/\mu_{d,e,p}\equiv \varepsilon_{d,e}(p)a_p(g_{d,e})   \pmod{p^3} 
$$}
As result, Lemma \ref{lem:thm1-step1} combined with \eqref{eq:modp} implies \eqref{eq:modp^2}. 

Following the method of \cite{HMM1}, we prove that this congruence actually is an equality. Work of Katz \cite{KatzESDE} (see  \cite[Theorem 4.1]{HMM1} for this result specialized to our case) implies $$|\BH\left(
c\cdot \varkappa(d,e);1;\wp
\right)|\leq 3p^{3/2}.$$ Likewise, for modular forms, we have the Weil bound, which states $|\varepsilon_{d,e}(p)a_p(g_{d,e})|\leq 2p^{3/2}$. As a result, we must have $$|\BH\left(
c\cdot \varkappa(d,e);1;\wp
\right)|-|\varepsilon_{d,e}(p)a_p(g_{d,e})|\leq 5p^{3/2},$$ yet the left-hand side is also equivalent to zero mod $p^2$. For primes $p>23$, we have $p^2>5p^{3/2}$, and so this has to be an equality. The conclusion for small primes can be verified by direct computation. When $\BH\left(
c\cdot \varkappa(d,e);1;\wp
\right)$ is an integer, this yields the desired result. Hence, when $\varkappa(d,e)$ is defined over $\Q$, the proof is complete. Otherwise, we now must show \begin{equation}
    \BH\left(
c\cdot \varkappa(d,e);1;\wp
\right) \in \Z.
\end{equation} Note that by \eqref{eq:Sta-HD}, $St_{c\cdot \varkappa(d,e)}=\{i\mid i\in (\Z/e\Z)^\times, i\equiv \pm 1\pmod e\}$ as $d\in\{2,3,4\}$. Thus the datum $c\cdot \varkappa(d,e)$ is defined over the totally real field $\Q(\zeta_e+\zeta_e^{-1})$ which is a subfield of $\Q(\sqrt{2},\sqrt{3})$ as $e\mid 24$. 

This implies that $\BH\left(c\cdot \varkappa(d,e);1;\wp
\right)$ lies in the ring of integers
of $\Q(\sqrt{2},\sqrt{3})$. 
As a result, $$\nu_1:=\BH\left(
 \varkappa(d,e);1;\wp
\right)-
(-1)^{\frac{p-1}{d}+\frac{p-1}{e}}p=u+v\sqrt{3}+\frac{w\sqrt{2}+t\sqrt{6}}{2},$$ for $u,v,w,t\in \Z$, with $w\equiv t\mod 2$. Note $\BH\left(
 \varkappa(d,e);1;\wp
\right)$ is a Galois conjugate of $\BH\left(c\cdot
 \varkappa(d,e);1;\wp
\right)$ for $c\in (\Z/M\Z)^\times$, again by straightforward properties of Gauss sums, and so multiplying by $c$ corresponds to switching the signs of $v,w,t$. Let $\nu_c$ denote the conjugate of $\nu_1$ given by multiplication by $c$. Hence, $$u=\frac{1}{\varphi(M)}\sum_{c\in (\Z/M\Z)^\times}\nu _c\equiv \varepsilon_{d,e}(p)  a_p(g_{d,e})\pmod{p^2}.$$ However, we know that individually, each $\nu_c$ is congruent to $\varepsilon_{d,e}(p)  a_p(g_{d,e})$ from \eqref{eq:modp^2}, and so the only possibility is that $p^2$ divides each of $v,w,t$. Using the same bounds as above, this is only possible if $v,w,t=0$, and the claim follows.
\end{proof}

\subsection{Proof of Theorem \ref{thm: Galois version}}

\begin{proof}[Proof of Theorem \ref{thm: Galois version}]

If $e\in \{2,3,4,6\}$, then $\varkappa(d,e)$ is defined over $\Q$. In this case, the $G_\Q$ representation $\eta_{\varkappa(d,e),\ell,1}^{BCM}$ from Proposition \ref{prop:galois-reps}  also admits a decomposition of the form
\begin{equation}\label{eq:gal-decomp}
\eta_{\varkappa(d,e),\ell,1}^{BCM}\cong \rho_{f^\sharp_{d,e}}\oplus \varsigma(d)\otimes\varsigma(e)\otimes\epsilon_\ell    
\end{equation}
extending \eqref{eq:eta-decomp}, with the modular form up to a twist determined by Theorem \ref{thm:H-version}; see \cite{LTYZ}. Among them $\rho_{f^\sharp_{d,e}}$ is a 2-dimensional representation of $G_\Q$ with Frobenius traces in $\Z$. 
When $\rho_{f^\sharp_{d,e}}$ admits CM, namely, when $(d,e)=(3,4)$ or $(4,3)$, the two cases correspond to the same representation. It therefore suffices to determine $\rho_{f^\sharp_{4,3}}$. As $g_{4,3}=f_{9.4.a.a}=\eta(3\tau)^8$ which admits CM by $\Q(\sqrt{-3})$, and by the second claim of  Proposition \ref{prop:key-modp},  $$(-C_1(4)/4)^{(p-1)/3}=(-64)^{(p-1)/3}\equiv 1\pmod p \quad \text{for} \quad p\equiv 1\pmod 3.$$ Thus $\rho_{f^\sharp_{4,3}}$ is isomorphic to $\rho_{f_{9.4.a.a}}$. 

Next we deal with the non-CM cases.   Theorem \ref{thm:H-version}, together with the Chebotarev density theorem, implies that $\rho_{f^\sharp_{d,e}}|G_{\Q(\zeta_e)}$ and $\rho_{g_{d,e}}|G_{\Q(\zeta_e)}$ are isomorphic. The extensions of these representations to $G_\Q$, $\rho_{f^\sharp_{d,e}}$ and $\rho_{g_{d,e}}$ are isomorphic up to a finite order character, which must be order at most 2 since both $\rho_{g_{d,e}}$ and $\rho_{f^\sharp_{d,e}}$ have Frobenius traces in $\Z$. It therefore remains only to determine this finite-order character. By the first claim of Proposition \ref{prop:key-modp}, it is determined by $(-C_1(d)/4)^{(1-jp/e)}\beta_{d,e,j}\pmod p$.\footnote{It is unclear otherwise why  $(-C_1(d)/4)^{(1-jp/e)}\beta_{d,e,j} \equiv \pm 1 \pmod p$.} Explicit calculations show that it is the trivial character when $d=2$ and $\chi_{-d}$ when $d=3,4$, as stated in Theorem \ref{thm: Galois version}.
\end{proof}
\begin{remark}
    The isomorphism of $G_\Q$ representations can also be obtained directly using supercongruences in \cite{LLL26,LTYZ} combined with Proposition \ref{prop:key-modp} via the same argument as above without appealing to \eqref{eq:gal-decomp}. We omit the details.
\end{remark}

\section{Future problems}\label{sec:future}

As discussed in the literature by e.g. \cite{HMM2, LLT2, Zagier-top-diff,Zudilin-SIGMA}, we expect a link between the hypergeometric evaluations to the periods of modular forms.  For example, in \cite{Zagier-top-diff}, Zagier shows that 
$$
\frac{\pi^2}{16} \pFq43{\tfrac12&\tfrac1{2}&\tfrac12&\tfrac 12}{&1&1&1} 1= L(f_{8.4.a.a},1). 
$$
When $d=2$, $3$, $4$, via  the modular forms construction in \Cref{constructingMF}, we have 
\begin{align*}
F( \varkappa(d,2),1)=& \frac1{2\pi i} \oint_{|t_d|=1} F(\varkappa_{alg}(1/2),1/t_d)F(\varkappa_3(d),t_d)\frac{dt_d}{t_d}\\
=& \frac1{2\pi i} \oint_{|4z_d(1-z_d)|=1} \left( \frac{4 z_d}{1-z_d}\right)^{1/2}F(\varkappa_2(d),z_d))^2\frac{dz_d}{z_d}.
\end{align*}
Similarly, we have the following expression in terms of the Euler integral formula, 
$$
F( \varkappa(d,2),1)=  \frac 2{\pi}\int_0^{1/2}\left( \frac{ z_d}{1-z_d}\right)^{1/2}F(\varkappa_2(d),z_d))^2\frac{dz_d}{z_d}. 
$$
\begin{example} For $d=2$, we have
    $$
\frac{\pi}4 \pFq43{\tfrac12&\tfrac1{2}&\tfrac12&\tfrac 12}{&1&1&1} 1= L(f_{16.4.a.a},1)
$$
given in \cite{RWZ}.

Applying a similar technique in \cite{RWZ},  we have the identity 
$$
3\int_0^{1/2}\left( \frac z{1-z}\right)^{1/2}\pFq21{\tfrac1{2}&\tfrac{1}{2}}{&1}{z}^2\frac{dz}{z}=\int_{1/2}^1\left( \frac z{1-z}\right)^{1/2}\pFq21{\tfrac1{2}&\tfrac{1}{2}}{&1}{z}^2\frac{dz}{z},
$$
which implies that 
\begin{align*}
   \pi  F( \varkappa(d,2),1)
   &= \frac{1}2 \int_0^1\left( \frac {z_2}{1-z_2}\right)^{1/2}\pFq21{\tfrac1{2}&\tfrac{1}{2}}{&1}{z_2}^2\frac{dz_2}{z_2}\\
     &=  \frac{1}2  \int^{\frac12}_{\infty} \sqrt{-16}\cdot 2\pi i \cdot f_{8.4.a.a}(\tau/2)d\tau = 8\pi   \int_{\frac14}^{\infty} f_{8.4.a.a}(\tau)d\tau\\
      &= 8\pi   \int_{0}^{\infty} f_{8.4.a.a}(\tau+1/4)d\tau. 
\end{align*}
Note that $ f_{8.4.a.a}(\tau+1/4)=f_{16.4.a.a}(\tau)$ (this can be also deduced from the transformation formula of $\eta(\tau+1/2)$). Hence we have 
$$
 \pi  F( \varkappa(d,2),1)=4 \cdot L(f_{16.4.a.a},1). 
 $$

Similarly, from Zagier's trick, we have 
\begin{align*}
     F( \varkappa(d,2),1)&= \frac{1}{\pi i} \int_{\frac{1+i}2}^{\frac{-1+i}2} f_{8.4.a.a}(\tau/2)d\tau\\
     &= \frac{1}{\pi i} \int_{\frac{1+i}2}^{\frac 12+i\infty} f_{8.4.a.a}(\tau/2)d\tau-\frac{1}{\pi i}\int_{\frac{-1+i}2}^{-\frac 12+i\infty}f_{8.4.a.a}(\tau/2)d\tau. 
\end{align*}
Recall that $z_2(\tfrac{1+i}2)=z_2(\tfrac{-1+i}2)=1/2$ and $f_{8.4.a.a}(\tau\pm 1/4)=\pm i f_{16.4.a.a}(\tau)$. This identity is basically the same as the one obtained from the Euler integral. 
\end{example}

 In the preprint \cite{HTY}, the authors use the relation between the $L$-value and Eichler integral of the form $f_{12.4.a.a}$ to express $L(f_{12.4.a.a},2)$ in terms of hypergeometric evaluations with $(d,r)=(3,1/2)$ in \eqref{eqn: Euler}. 
 \begin{example}[\cite{HTY}]
 \begin{align*}
     \frac{24\sqrt3}\pi &L(f_{12.4.a.a},2)\\
     &=\frac{\Gamma(\frac 16)\Gamma(\frac 13)^2}{\Gamma(\frac 56)\Gamma(\frac 23)^3}
    \pFq43{\frac 13&\frac 13&\frac 13& \frac 13}{&\frac 23&\frac 56&\frac56}1
    -108\frac{\Gamma(\frac 56)\Gamma(\frac 23)^2}{\Gamma(\frac 16)\Gamma(\frac 13)^3}
       \pFq43{\frac 23&\frac 23&\frac 23& \frac 23}{&\frac 43&\frac 76&\frac76}1.
 \end{align*}
   The ${}_4F_3(1)$'s in the equality are arising from the solutions around the singularity $t=\infty$ of the differential equation satisfied by $F(\varkappa_3(3),t)$. 
To get the identity, the main idea is to consider the integral 
$$
  \int_0^\infty \left(\frac t{1-t}\right)^{1/2}F(\varkappa_3(3),t) \frac{dt} t =2\pi i \int_{\frac{3+\sqrt{-3}}6}^\infty g(\tau/2)d\tau,
$$
which in turn is a multiple of $L(f_{12.4.a.a},2)$. 
\end{example}

In a similar manner, the $L$-value of $f_{9.4.a.a}(\tau)=\eta(3\tau)^8$ can be obtained using Euler integral (\cite[Family (2)]{Barman_MaityEHMM}) and the  Barnes integral representation of $F(\varkappa_3(2),t)$: 
\begin{align*}
     L(\eta(3\tau)^8,1)
     &=\frac {\sqrt \pi}6\left(\frac{\Gamma(\frac 13)}{\Gamma(\frac 56)} \pFq43{\frac 12&\frac 12&\frac 12& \frac 13}{&1&1&\frac56}1
    -\frac{\Gamma(\frac 23)^3}{\pi\Gamma(\frac 76)^3}
       \pFq43{\frac 23&\frac 23&\frac 23& \frac 12}{&\frac 76&\frac 76&\frac76}1
    \right)\\
    &=\frac {\sqrt{\pi}}{12}\left(\frac{\Gamma(\frac 13)^3}{\pi\Gamma(\frac 56)^3}
       \pFq43{\frac 13&\frac 13&\frac 13& \frac 12}{&\frac 56&\frac 56&\frac56}1-\frac{\Gamma(\frac 23)}{\Gamma(\frac 76)} \pFq43{\frac 12&\frac 12&\frac 12& \frac 23}{&1&1&\frac76}1
    \right). 
\end{align*}

{ Shimura’s work \cite{Shimura-special-zeta} established, prior to Deligne’s formulation of his general conjecture \cite{deligne-l-values}, the expected algebraicity and period relations for critical $L$-values of modular forms. In this setting, the critical value $L(f,1)$ is expressed, up to an algebraic factor, in terms of one of two fixed periods $c^{\pm}$. See \cite{deligneraghuram} for a modern exposition of these ideas. Deligne’s conjecture applies to a far broader class of geometric motives than those arising from modular forms, however, and is expected to apply to the $L$-functions of hypergeometric motives as well. The periods of hypergeometric motives are known to involve classical hypergeometric functions. Accordingly, hypergeometric evaluations of $L$-functions provide a natural testing ground for Deligne’s conjecture. In examples such as the one above, when the modular form has complex multiplication, the transcendental part of the period is given by a fixed quotient of gamma values; see \cite{Gross}. For the example above, this relation is made explicit in Appendix \ref{lvalueappendix}. In \cite{rosenmixedI}, it is explained in detail how classical hypergeometric evaluation formulas are related to $L$-values of CM modular forms. In particular, in certain special cases, a single hypergeometric series is related to multiple modular forms with distinct CM discriminants. The examples above suggest that further relations between CM $L$-values and hypergeometric evaluations remain to be discovered.}

\medskip
Here are some additional future directions.
\begin{itemize}
    \item How to use the partition \eqref{eq:partition} to obtain relations between periods of modular forms and hypergeometric values as described above?
    \item Can we use the EHMM to prove other remaining conjectures in \cite{gugiatti2024hypergeometriclocalsystemsmathbbq,Long18}?

\end{itemize}

\bigskip

\def\Gfam{\mathfrak G}
\appendix
\renewcommand{\thesection}{A.\arabic{section}}
\section{Explicit Modular Forms}\label{EHMMappendix}

In this section, we provide more details on the explicit modular forms computed using the EHMM for \Cref{lemma: constructingforms} in the cases of $d=2,4$. The weight four cusp forms $\BG_{d}(i/e)(e \tau)$ constructed in \Cref{lemma: constructingforms} are a key ingredient in the construction of the $g_{d,e}$ eigenforms. We now extend the explicit discussion of the $\BG_{3}$ functions from \Cref{constructingMF} to the $d = 2,4$ cases.

In the $d = 2$ case, we compute the cusp forms
$$\BG_{2}(i/e)(e \tau) := \eta(e\tau)^{8i/e}\eta(2e\tau)^{8(2-3i/e)}\eta(4e\tau)^{8(2i/e-1)}$$

Now, to ensure congruence cusp forms, we restrict to the rational numbers $i/e \in (0,1)$ such that $8i/e$ is an integer. These seven rational numbers are grouped into three families. Similarly, in the $d=4$ case we find the cusp forms 
$$\BG_{4}(i/e)(e \tau) = E_{2,2}(e\tau)^{2} \cdot \frac{\eta(e\tau)^{24(1-i/e)}\eta(2e\tau)^{24i/e}}{\eta(e \tau)^{24} + 64 \eta(2 e \tau)^{24}}$$

Now, we restrict to the rational numbers $i/e \in (0,1)$ such that $24i/e$ is an integer. These twenty-three rational numbers are grouped into seven families. All $\Gfam_{d,e}$ families for $d=2,3,4$ are given in the tables below, with an explicit eigenform for each family. Note that there are $\varphi(e)$ normalized Hecke eigenforms, as described in \Cref{lemma: constructingforms}; we will list one of them.

\renewcommand{\arraystretch}{1.18}

\begin{table}[ht]
\caption{The $\Gfam_{2,e}$ families with a chosen element}\label{G2table}
\vspace{2mm}
\begin{center}
    \begin{tabular}{|c|c|c|c|c|}
    \hline
    \textrm{Family} & \textrm{$r$ values} & \textrm{Eigenform} & \textrm{Level} & \textrm{Character } $\xi_{d,e}$ \\ \hline
$\Gfam_{2,2}$ & $1/2$ &  $f_{8.4.a.a} = \BG_{2}(1/2)(2 \tau)$ & $8$ & $\left(\frac{1}{\cdot}\right)$ \\ \hline
$\Gfam_{2,4}$ &$1/4,3/4$ & $f_{16.4.a.a} = \BG_{2}(1/4)(4 \tau)$ & $16$ & $\left(\frac{1}{\cdot}\right)$\\
&&$+ 4 \BG_{2}(3/4)(4 \tau)$ && \\ \hline
$\Gfam_{2,8}$ & $1/8,3/8$ & $f_{32.4.b.a} = \BG_{2}(1/8)(8 \tau)$ & $32$ & $\left(\frac{2}{\cdot}\right)$ \\ 
& $5/8,7/8$ &$-2 \sqrt{-7}\BG_{2}(3/8)(8 \tau)$ &&\\
&&$-4 \sqrt{-7} \BG_{2}(5/8)(8 \tau)$&&\\
&&$+8 \BG_{2}(7/8)(8 \tau)$&&\\
\hline
    \end{tabular}
\end{center}
\end{table} 
\renewcommand{\arraystretch}{1}

\renewcommand{\arraystretch}{1.02}

\begin{table}[ht]
\caption{The $\Gfam_{3,e}$ families with a chosen element}\label{G3table}
\vspace{2mm}
\begin{center}
    \begin{tabular}{|c|c|c|c|c|}
    \hline
    \textrm{Family} & \textrm{$r$ values} & \textrm{Eigenform} & \textrm{Level} & \textrm{Character } $\xi_{d,e}$ \\ \hline
$\Gfam_{3,2}$ & $1/2$ & $f_{12.4.a.a} = \BG_{3}(1/2)(2 \tau)$ & $12$ & $\left(\frac{1}{\cdot}\right)$ \\ \hline
$\Gfam_{3,3}$ &$1/3,2/3$ & $f_{27.4.a.b} = \BG_{3}(1/3)(3 \tau)$ & $27$ & $\left(\frac{1}{\cdot}\right)$\\
&&$+ 3 \BG_{3}(2/3)(3 \tau)$ && \\ \hline
$\Gfam_{3,4}$ & $1/4,3/4$ & $f_{48.4.c.a} = \BG_{3}(1/4)(4 \tau)$ & $48$ & $\left(\frac{3}{\cdot}\right)$\\
&&$-3 \sqrt{-3} \BG_{3}(3/4)(4 \tau)$ && \\ \hline
$\Gfam_{3,6}$ & $1/6,5/6$ & $f_{108.4.a.d} = \BG_{3}(1/6)(6 \tau)$ & $108$ & $\left(\frac{1}{\cdot}\right)$\\
&&$+ 9 \BG_{3}(5/6)(6 \tau)$ &&\\ \hline
$\Gfam_{3,12}$ & $1/12,5/12$ & $f_{432.4.c.f} = \BG_{3}(1/12)(12 \tau)$ & $432$ & $\left(\frac{3}{\cdot}\right)$ \\ 
& $7/12, 11/12$ &$+3\sqrt{-17} \cdot \BG_{3}(5/12)(12 \tau)$ &&\\
&&$+ 3\sqrt{-51} \cdot \BG_{3}(7/12)(12 \tau)$&&\\
&&$+ 9\sqrt{3} \BG_{3}(11/12)(12 \tau)$&&\\
\hline

    \end{tabular}
\end{center}
\end{table}

\renewcommand{\arraystretch}{1}

\renewcommand{\arraystretch}{1.18}

\begin{table}[ht]

\caption{The $\Gfam_{4,e}$ families with a chosen element}\label{G4table}
\vspace{2mm}
\begin{center}
    \begin{tabular}{|c|c|c|c|c|}
    \hline
    \textrm{Family} & \textrm{$r$ values} & \textrm{Eigenform} & \textrm{Level} & \textrm{Character } $\xi_{d,e}$ \\ \hline
$\Gfam_{4,2}$ & $1/2$ & $f_{8.4.a.a} = \BG_{4}(1/2)(2 \tau)$ & $8$ & $\left(\frac{1}{\cdot}\right)$ \\ \hline
$\Gfam_{4,3}$ &$1/3,2/3$ & $f_{9.4.a.a}(\tau) = \BG_{4}(1/3)(3 \tau)$ & $18$ & $\left(\frac{1}{\cdot}\right)$\\
&&$f_{9.4.a.a}(2 \tau) = \BG_{4}(2/3)(3 \tau)$ && \\ \hline
$\Gfam_{4,4}$ & $1/4,3/4$ & $f_{32.4.a.c} = \BG_{4}(1/4)(4 \tau)$ & $32$ & $\left(\frac{1}{\cdot}\right)$\\
&&$+ 8 \BG_{4}(3/4)(4 \tau)$ && \\ \hline
$\Gfam_{4,6}$ & $1/6,5/6$ & $f_{72.4.a.d} = \BG_{4}(1/6)(6 \tau)$ & $72$ & $\left(\frac{1}{\cdot}\right)$\\
&&$+ 16 \BG_{4}(5/6)(6 \tau)$ &&\\ \hline
$\Gfam_{4,8}$ & $1/8,3/8$ & $f_{128.4.b.e} = \BG_{4}(1/8)(8 \tau)$ & $128$ & $\left(\frac{2}{\cdot}\right)$ \\ 
& $5/8,7/8$ &$+2\sqrt{-10} \cdot \BG_{4}(3/8)(8 \tau)$ &&\\
&&$+ 8\sqrt{-5} \cdot \BG_{4}(5/8)(8 \tau)$&&\\
&&$+ 16\sqrt{2} \cdot \BG_{4}(7/8)(8 \tau)$&&\\
\hline
$\Gfam_{4,12}$ & $1/12,5/12$ & $f_{288.4.a.\ell} = \BG_{4}(1/12)(12 \tau)$ & $288$ & $\left(\frac{1}{\cdot}\right)$\\
& $7/12,11/12$ &$-4 \sqrt{13} \BG_{4}(5/12)(12 \tau)$ &&\\
&&$+8 \sqrt{13} \BG_{4}(7/12)(12 \tau)$&&\\
&&$-32 \BG_{4}(11/12)(12 \tau)$&&\\
\hline
$\Gfam_{4,24}$ & $1/24, 5/24$ & $f_{1152.4.d.q} = \BG_{4}(1/24)(24 \tau)$ & $1152$ & $\left(\frac{2}{\cdot}\right)$\\
& $7/24,11/24$ &$+ 2\sqrt{-35} \cdot \BG_{4}(5/24)(24 \tau)$&&\\
& $13/24, 17/24$ &$ -2 \sqrt{110} \cdot \BG_{4}(7/24)(24 \tau)$&&\\
& $19/24, 23/24$ &$+ 4 \sqrt{-154} \cdot \BG_{4}(11/24)(24 \tau)$&&\\
&&$+8\sqrt{-77} \cdot \BG_{4}(13/24)(24 \tau)$&&\\
&&$-16\sqrt{55} \cdot \BG_{4}(17/24)(24 \tau)$&&\\
&&$-16 \sqrt{-70} \cdot \BG_{4}(19/24)(24 \tau)$&&\\
&&$ -280 \sqrt{2} \cdot \BG_{4}(23/24)(24 \tau)$&&\\
\hline

    \end{tabular}
\end{center}
\end{table}

\renewcommand{\arraystretch}{1}

\FloatBarrier

\section{\texorpdfstring{$L$}{L}-values of the CM cusp forms}\label{lvalueappendix}
For the CM forms associated with  $F( \varkappa(3,4),1)$, we have the following $L$-values. 
\begin{theorem}  Let $\Omega=\frac{\G(1/3)^6}{\G(2/3)^3}$. 
 
We have
$$
    L(\eta(3\tau)^8,3)= \frac{\Omega}{162}=\frac{2\pi^2}9 L(\eta(3\tau)^8,1)= \frac{2\sqrt 3\pi}9 L(\eta(3\tau)^8,2).
$$
Let $g_{\pm} = \BG_{3}(1/4)(4 \tau)\pm3 \sqrt{-3} \BG_{3}(3/4)(4 \tau)$. Then
$$ 
L(g_{-},3) =\frac{ (1-i)\zeta_{12}}{2^{13/2}\cdot 3^{3/4}} \Omega=\frac{\pi^2}{24 i}L(g_{+},1)$$

and

$$L(g_{-},2)= \frac{(1-i)}{2^{9/2}\cdot 3^{3/4}}\frac{\Omega}{\pi}.
$$
\end{theorem}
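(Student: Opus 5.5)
The approach is to treat both forms as CM forms by $\Q(\sqrt{-3})$ and compute their $L$-values through Hecke $L$-series of Hecke characters. Critical values are then expressed through the Chowla--Selberg period of the elliptic curve with CM by $\Z[\zeta_3]$, namely $\Gamma(1/3)^3/\Gamma(2/3)^{3/2}$ up to algebraic factors, which is why $\Omega=\Gamma(1/3)^6/\Gamma(2/3)^3$ appears. First I will write $\eta(3\tau)^8=\frac{1}{2}\sum_{\alpha}\psi(\alpha)q^{N\alpha}$, where $\psi(\alpha)=\alpha^3$ is normalized on $\alpha\equiv 1\pmod 3$ in $\Z[\zeta_3]$ (up to a root-of-unity/sign twist). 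Then $L(\eta(3\tau)^8,s)=\frac12\sum' \bar\alpha^3 N\alpha^{-s}$, an Eisenstein-type lattice sum. At $s=3$ this is $\frac12\sum'\alpha^{-3}$ restricted to a congruence class, which evaluates by the Weierstrass theory of the hexagonal lattice $\Z[\zeta_3]$: the sum $\sum\alpha^{-3}$ over a coset is a value of $\wp'$ at a $3$-torsion point. Using the known period $\omega$ of $y^2=4x^3-1$, where $\omega$ is a Beta-value $\frac{1}{3}B(1/3,1/2)\cdot$const, and the Legendre duplication to rewrite this in terms of $\Gamma(1/3)^3/\Gamma(2/3)^{3/2}$ up to $\sqrt{\pi}$-free factors, I will obtain $L(\eta(3\tau)^8,3)=\Omega/162$.

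Second, I will use the functional equation $\Lambda(s)=(\sqrt{9}/2\pi)^s\Gamma(s)L(f,s)=\pm\Lambda(4-s)$ for level $9$ and weight $4$, whose sign is $+1$ since $L(f,1)\neq0$. This gives $L(f,3)=\frac{2\pi^2}{9}L(f,1)$ directly, after computing $(3/2\pi)^{3}\cdot 2=(3/2\pi)\cdot\Gamma(1)\cdot(\ldots)$. For $L(f,2)$, the central value, I will use an independent route: the hypergeometric expression for $L(\eta(3\tau)^8,1)$ already displayed in \S\ref{sec:future}, or equivalently Damerell-type evaluation of $\sum'\bar\alpha^3/N\alpha^2=\sum'\bar\alpha/\alpha^{2}$ via nonholomorphic Eisenstein series $E_{3,1}$ at the CM point. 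Both should give $\sqrt3\pi L(f,2)=\pi^2L(f,1)$, up to the stated constant.

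Third, for $g_\pm$, I will identify $g_-$ and $g_+$ as the level-$48$ forms in $\Gfam_{3,4}$ with character $\left(\frac{3}{\cdot}\right)$ (Table~\ref{G3table}). Both have CM by $\Q(\sqrt{-3})$, and the sign distinguishes $f$ from its complex conjugate (Galois conjugates under $\sqrt{-3}\mapsto-\sqrt{-3}$). So $g_+=\overline{g_-}$ coefficientwise, $L(g_+,s)=\overline{L(g_-,\bar s)}$, and the functional equation pairs $g_-$ with $g_+$ with a root number involving a Gauss sum of $\left(\frac{3}{\cdot}\right)$. That root number explains the factor $\frac{\pi^2}{24i}$, using $(\sqrt{48}/2\pi)^{-2}\cdot\Gamma(1)/\Gamma(3)$. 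The Hecke character of $g_-$ is $\psi$ twisted by the quartic-type character of conductor related to $4$, which by Table~\ref{G3table} arises from $\BG_3(1/4)$. Its $L$-values at $s=3,2$ again reduce to lattice sums over $\Z[\zeta_3]$ twisted by a character mod $4\sqrt{-3}$. These evaluate through $\wp$ and $\wp'$ at $12$-torsion points, which produce the algebraic factors $2^{-13/2}3^{-3/4}(1-i)\zeta_{12}$.

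The main obstacle is this last explicit evaluation of the twisted lattice sums at $s=2$, the central value for $g_-$, where holomorphy fails and one needs the nonholomorphic Eisenstein series value (Damerell's theorem) at the $12$-torsion points. There I would first try to bypass it by using the Euler-integral/Zagier's-trick representation \eqref{eq:Zagier} with $e=4$, $d=3$ to express $L(g_-,2)$ as a ${}_4F_3(1)$ combination, and then evaluate via Gamma values numerically, confirming the closed form to high precision. A rigorous version would rely on the Chowla--Selberg formula.
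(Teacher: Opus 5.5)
Your treatment of the $s=3$ values is sound in outline and is essentially the Li--Long--Tu route the paper cites. Use the correct normalization $\eta(3\tau)^8=\sum_{\alpha\equiv 1\pmod 3}\alpha^3q^{N\alpha}$; there is no factor $\tfrac12$, since only $\alpha=1$ has norm $1$ in that class. Then $L(\eta(3\tau)^8,3)=\sum_{\alpha\equiv1\pmod 3}\alpha^{-3}$ is $-\tfrac1{54}$ times the derivative of the Weierstrass function of $\Z[\zeta_3]$ at $\tfrac13$. That derivative has square $3g_3=3\Gamma(1/3)^{18}/(2\pi)^6$, and positivity of $L(f,3)$ fixes the sign, giving exactly $\Omega/162$. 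For $g_-$, your twisted lattice sums are the same objects as the paper's $\mathcal G(\tau)=\sum((4m+1)\tau+n)^{-3}$ at $\tau=\sqrt{-3}/12,\sqrt{-3}/6,\sqrt{-3}/4,\sqrt{-3}/2$, which the paper evaluates through eta quotients and Chowla--Selberg values. One minor correction: the root number of $\eta(3\tau)^8$ is $+1$ because $\eta(-1/\tau)^8=\tau^4\eta(\tau)^8$ gives $f|_4W_9=f$. Your reason ``since $L(f,1)\neq 0$'' is not valid, because a root number $-1$ would kill $L(f,2)$, not $L(f,1)$.

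The genuine gap is the central value $s=2$ for both forms, and none of your routes closes it. The formula in \S\ref{sec:future} concerns $L(\eta(3\tau)^8,1)$, not $L(\cdot,2)$, and is written in ${}_4F_3(1)$ values that have no closed form. Zagier's trick produces integrals from a CM point to a cusp, not $L$-values; the paper explicitly leaves that connection open, and numerical agreement proves nothing. At $s=2$ the series $\sum\psi(\alpha)N\alpha^{-2}$ has terms of size $|\alpha|^{-1}$ and diverges, so Damerell needs the analytically continued Eisenstein--Kronecker series at CM points, which you never compute.

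The missing idea is that the central value need not be computed directly. For weight $4$, $\int_0^{i\infty}f(\tau)\tau^j\,d\tau=j!\,(i/2\pi)^{j+1}L(f,j+1)$ for $j=0,1,2$. Cutting paths between cusps using the normalizer then gives a linear relation among the three critical values. For $\eta(3\tau)^8$ the normalizer of $\Gamma_0(9)$ is conjugate to $\SL_2(\Z)$, so work with $h(\tau)=\eta(\tau)^8$, which satisfies $h(\tau+1)=\zeta_3h(\tau)$ and $h(-1/\tau)=\tau^4h(\tau)$. Applying $\int_{-1}^{i\infty}+\int_{i\infty}^{0}+\int_0^{-1}=0$ to $h(\tau)(X-\tau)^2$ yields $\sqrt3\,L(\eta(3\tau)^8,2)=\pi L(\eta(3\tau)^8,1)$, which is exactly the relation you only conjectured.

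For $g_-$, the normalizer of $\Gamma_0(48)$ is generated by $\tau\mapsto\tau+\tfrac14$ and $W_{48}$, a $(2,6,\infty)$ group. It gives $12L(g_-,3)-3\pi iL(g_-,2)-\tfrac{\pi^2}{2}L(g_-,1)=0$. Insert $\pi^2L(g_-,1)=-24i\,\overline{L(g_-,3)}$ from the functional equation, which is the complex conjugate of the stated relation $L(g_-,3)=\frac{\pi^2}{24i}L(g_+,1)$, using $g_+=\overline{g_-}$ coefficientwise. This gives $L(g_-,2)=\tfrac{4}{\pi i}\bigl(L(g_-,3)+i\,\overline{L(g_-,3)}\bigr)=\tfrac{(1-i)\Omega}{2^{9/2}3^{3/4}\pi}$. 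So only the $s=3$ values require CM input; everything else follows from period relations and the functional equation.
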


\begin{proof}
For the CM form $\eta(3\tau)^8$, according to \cite{LLT} and the properties of  period polynomials in \cite[Section 11]{Cohen-Stromberg} or \cite{HTY}, one can obtain
$$
\frac{\Omega}{162}=L(\eta(3\tau)^8,3)= \frac{2\pi^2}9 L(\eta(3\tau)^8,1)= \frac{2\sqrt 3\pi}9 L(\eta(3\tau)^8,2).
$$
For the CM forms in the Hecke orbit  $48.4.c.a$, their $q$-expansions are
\begin{align*}
g_{\pm} =& 2\sum_{m, n \in \Z} (4m+1+4n\sqrt{-3}) q^{(4m+1)^2+48n^2}- \sum_{m, n \in \Z} (4m+1+2n\sqrt{-3}) q^{(4m+1)^2+12n^2}\\
         &\pm \sum_{m, n \in \Z} (2m+(4n+1)\sqrt{-3}) q^{4m^2+3(4n+1)^2}\\
         &\mp 2\sum_{m, n \in \Z} (4m+(4n+1)\sqrt{-3}) q^{16m^2+3(4n+1)^2}
\end{align*}
Here  $g_{-} = \BG_{3}(1/4)(4 \tau)-3 \sqrt{-3} \BG_{3}(3/4)(4 \tau)$ in our notation. 
From the $q$-expansion of $g_{-}$, we can derive that 
$$
L(g,3) := 4\pi^3i \left(\left(\frac1{32\cdot 9}\mathcal G(s/12)-\frac1{8\cdot 9}\mathcal G(s/6)\right)\sqrt{-3}-\frac18\mathcal G(s/2)+\frac1{32}\mathcal G(s/4)\right),
$$
where $s=\sqrt{-3}$ and 
$$
\mathcal G(\tau)=\sum_{m, n \in \Z}\frac{1}{[(4m+1)\tau+n]^3} = - \frac1{16}\theta_3(2\tau)^2\theta_2(2\tau)^4=-\frac{\eta(2\tau)^6\eta(4\tau)^4}{\eta(\tau)^4}. 
$$
Therefore, the special value follows the known special values and transformation laws of $\eta(\tau)$, and that 
$$
\eta(s)^6=\frac{1}{2^53^{4/3}}\frac{\Omega}{\pi^3},
$$
(see \cite{LLT}). We will omit the details and list the exact $\eta$-values. 
$$
\begin{array}{c|c}
\tau_0&\eta(\tau_0)     /\eta(s)\\ \hline
s/12&2^{1/24}3^{1/4}(495\sqrt 6 - 495\sqrt 2+360\sqrt 3-1136)^{1/24}\\
s/6&2^{1/24}3^{1/4}(3\sqrt3-5)^{1/12}\\
s/4&(1980\sqrt 2 + 1980\sqrt 6- 1440\sqrt 3-4544 )^{1/24}\\
s/3&3^{1/4}\\
2s/3&\frac{\sqrt 2}{3^{1/4}}(104+60\sqrt3)^{1/24}\\
s/2&(104+60\sqrt3)^{1/24}\\
2s&\frac{(3\sqrt3-5)^{1/12}}{2^{11/24}}
\end{array}
$$

To find the relation between $L(g_{-},2)$ and $L(g_{-},3)$, we note that the normalizer of $\G_0(48)$ in $\SL_2(\R)$ is generated by 
$$
\left\langle\M 0{1/4}01, \frac 1{\sqrt{48}}\M0{-1}{48}0\right\rangle\simeq (2,6,\infty). 
$$
Applying similar properties  of  period polynomials described in \cite[Section 11]{Cohen-Stromberg} or \cite{HTY}, we can see that 
$$
12L(g_{-},3)-3\pi iL(g_{-},2)-\frac{\pi^2}2L(g_{-},1) =0.
$$
From the functional equation that $\pi^2 L(g_{-},1)=-24iL(g_{+},3)=-24i\ol{L(g_{-},3)}$, one can deduce the value $L(g_{-},2)$. 
\end{proof}

It will be interesting to know if the $L$-values are connected with the relevant hypergeometric values via Euler integral,  Barnes integral representations, or Zagier's trick.  For instance, using Zagier's trick, one can show that 
$$
 F( \varkappa(4,3),1)=12\sqrt 3 \cdot \left(\int_{1/\sqrt{18}}^\infty\eta(3it)^8dt +4 \int_{1/\sqrt{18}}^\infty\eta(6it)^8dt\right),
$$
where the two forms are the cusp forms on level $18$ as in Table \ref{G4table}. Does the information arising from the Eichler integral of $\eta(3\tau)^8$ yield any relation to its $L$-values? We leave this question to interested readers.

\input{Notation}

\bibliographystyle{plain}
\bibliography{ref}

\end{document}

%% file: Notation.tex
\section{Notation}\label{sec:notation}

\begin{itemize}
    \item CFGL: An abbreviation for the commutative formal group law.
    \item REAB: An abbreviation for Ramanujan's theory of elliptic functions to alternative bases.
    \item $M(d,e)$ : The least common multiple of $d$ and $e$. 
    \item $M_{d}$ : Positive integers which depend on $d$. Given explicitly in \Cref{thm: Galois version}.
    \item $\varphi(\cdot)$ : The Euler totient function.
    \item $(a)_{k}:= a(a+1) \cdots (a+k-1)$ : The Pochhammer symbol of $a$ with index $k$.
     \item For $\varkappa=\{\{a_1,\dots ,a_n\},\{b_1,\dots ,b_n\}\}$ define $A_{\varkappa}(k)=\prod_{i=1}^n\frac{(a_i)_k}{(b_i)_k}$.
    \item $F(\varkappa,z)=\sum_{k\ge0} A_{\varkappa}(k)z^k$ : classical hypergeometric series.
    \item $F(\varkappa,z)_{m}= \sum_{k=0}^m A_{\varkappa}(k)z^k$ : truncated hypergeometric function. 
    \item $\G(\cdot)$ is the gamma function, $\G_p(\cdot)$ is the $p$-adic Gamma function.
    \item For $\varkappa_1=\{\alpha_1,\beta_1\}$, $\varkappa_2=\{\alpha_2,\beta_2\}$, define $\varkappa_1\star\varkappa_2:=\{\alpha_1\cup \alpha_2,\beta_1\cup \beta_2\}$.
    \item $\varkappa_3(d):=\{\{1/d,1-1/d,1/2\},\{1,1,1\}\}$
    \item $\varkappa_2(d):=\{\{1/d,1-1/d\},\{1,1\}\}$
   
    \item $\varkappa_{alg}(a):=\{\{a,1-a\},\{1,1/2\}\}$
    \item $\varkappa(d;r,s) := \{\{1/d,(d-1)/d,r\},\{1,1,s\}\}$
     \item $\varkappa(d,e):=\varkappa_3(d)\star\varkappa_{alg}(1/e)$ 
    \item $\varkappa_{cont}(a):=\{\{a,a+1/2\},\{2a,1\}\}$; when $a=1/2$, it degenerates to $\{\{\frac12\},\{1\}\}$
   
    \item $t_d$ : a Hauptmodul of $(2,\tfrac{2d}{d-2},\infty)$, see Table \ref{tab:bigger triangle}
    \item $C_1(d)$ : The first $q$ coefficient of $t_d$.
    \item $z_d$ : A fixed modular function on $\Gamma_1(6-d)$  so that $t_d=4z_d(1-z_d)$, $\sqrt{1-t_d}=1-2z_d$, $\frac{1-\sqrt{1-t_d}}{2}=z_d$.
   \item ${\BG}_d(i/e)$: a fixed weight four modular form, see \eqref{eq:g(d,e,i)}
   
   \item $\Gfam_{d,e}$ : a vector space of dimension $\varphi(e)$ containing the $\BG_{d}(i/e)(e \tau)$ functions.
    \item $g_{d,e}$ : Normalized Hecke eigenforms of weight four given in  Tables \ref{G2table}, \ref{G3table}, \ref{G4table}.
    \item $f_{d,e}^{\sharp}$ : Normalized Hecke eigenforms of weight four in \Cref{thm: Galois version}.
    \item $\beta_{d,e,i}$ : Constants in \Cref{lemma: constructingforms} depending on $d,e,$ and $i$.
    \item $R:= \Z_p[C_1(d)^{1/e}, \beta_{d,e,j}: j\in  (\Z/e\Z)^\times]$ : The $p$-adic ring in \eqref{eq:R}.
    \item $\sigma$ : An automorphism of $R$ such that for each $a \in R$, $\sigma(a) \equiv a^{p} \pmod{pR}$.
    \item $[a]_{0}$ : The first $p$-adic digit for a given $a \in \Z_{p}$.
    \item $a':= (a + [-a]_{0})/p$ : The Dwork dash operation for a given $a \in \Z_{p}$.
    \item $\zeta_{M}$ : A fixed primitive $M$-th root of unity.
    \item $\mathfrak{p}$ : A prime ideal of $\Z[\zeta_{M}]$.
    \item $\kappa_{\mathfrak{p}}:= \Z[\zeta_{M}]/\mathfrak{p}$ : A residue field of $\Z[\zeta_{M}]$ at $\mathfrak{p}$.
    \item $\textrm{St}_{\varkappa} := \{c\in (\Z/M\Z)^\times \mid c\cdot \varkappa=\varkappa\}$ : The stablizer of the datum $\varkappa$, where $M$ is the least common denominator of the entries in $\varkappa$.
    \item $K_{\varkappa}$ : The fixed field of $\Q(\zeta_{M})$ under $\textrm{St}_{\varkappa}$. 
    \item $\BH(\varkappa;t;{ \wp})$ : The hypergeometric character sums defined in \eqref{eq:Hpdef}.
    \item $G_{K}:= \Gal(\overline{K}/K)$ : The absolute Galois group of a field $K$.
\item $\eta_{\varkappa,\ell,1}$ : The hypergeometric representation of $G_{\Q(\zeta_{M})}$ associated with the datum $\varkappa$ and the parameter $1$. Discussed in \cite{LLL26}.
\item $\eta_{\varkappa,\ell,1}^{\mathrm{BCM}}$ : The
semisimplification of the hypergeometric representation of $G_{\Q}$ associated with the datum
$\varkappa$ and the parameter $1$, as defined in \cite{LLL26}. 

\item $\rho_{f}$ : An $\ell$-adic representation of $G_{\Q}$ attached to the Hecke eigenform $f$.
\item $\rho_{d,e,\ell}$ : A two-dimensional representation in \eqref{eq:eta-decomp}.

\item $\mu_{\varkappa,t,p}$ : The $p$-adic unit root attached to the datum $\varkappa$ and parameter $t$.

\item $\varepsilon_{d,e}(p)$ : A sign which depends on $d,e,$ and $p$ that is determined by a congruence in \Cref{thm:H-version}.

\item $\varphi_{d,e}$: A finite order character in \eqref{eq:eta-decomp}. Described in the proof.

    \item $\epsilon_{\ell}$ : The $\ell$-adic cyclotomic character.
    \item $\iota_{\mathfrak{p}}(\cdot /M)$: an $M$-th residue symbol with modulus $\mathfrak{p}$. Defined in \eqref{eq:iotadef}.
    \item $\chi_{d}$ :  The Dirichlet character corresponding to $\mathbb Q(\sqrt d)$. 
    \item $\varsigma(e)$ : Quadratic Dirichlet characters depending on $e$, see \Cref{thm: Galois version}.
    \item $\xi_{d,e}$ : Dirichlet characters depending on $d,e$, see \Cref{lemma: constructingforms}.
   
\end{itemize}

%% file: ref.bib
@unpublished{HTY,
  author  = {Hsu, Che-Wei and Tu, Fang-Ting and Yang, Yifan},
  title   = {Hypergeometric evaluations of {$L$}-values of weakly holomorphic modular forms and harmonic Maass forms},
  note    = {In preparation},
  year    = {},
}

@article {RWZ,
    AUTHOR = {Rogers, M. and Wan, J. G. and Zucker, I. J.},
     TITLE = {Moments of elliptic integrals and critical {$L$}-values},
   JOURNAL = {Ramanujan J.},
  FJOURNAL = {Ramanujan Journal. An International Journal Devoted to the
              Areas of Mathematics Influenced by Ramanujan},
    VOLUME = {37},
      YEAR = {2015},
    NUMBER = {1},
     PAGES = {113--130},
      ISSN = {1382-4090,1572-9303},
   MRCLASS = {11F03 (11M41 33C20 33C75 33E05)},
  MRNUMBER = {3338042},
MRREVIEWER = {Qiao\ Zhang},
       DOI = {10.1007/s11139-014-9584-5},
       URL = {https://doi.org/10.1007/s11139-014-9584-5},
}

@incollection {kedlaya,
    AUTHOR = {Kedlaya, Kiran S.},
     TITLE = {Frobenius structures on hypergeometric equations},
 BOOKTITLE = {Arithmetic, geometry, cryptography, and coding theory 2021},
    SERIES = {Contemp. Math.},
    VOLUME = {779},
     PAGES = {133--158},
 PUBLISHER = {Amer. Math. Soc., [Providence], RI},
      YEAR = {[2022] \copyright 2022},
      ISBN = {978-1-4704-6794-4},
   MRCLASS = {33C80 (12H25 33C20)},
  MRNUMBER = {4445774},
MRREVIEWER = {Fana\ Tangara},
       DOI = {10.1090/conm/779/15673},
       URL = {https://doi.org/10.1090/conm/779/15673},
}

@article {Fedorov18,
    AUTHOR = {Fedorov, Roman},
     TITLE = {Variations of {H}odge structures for hypergeometric
              differential operators and parabolic {H}iggs bundles},
   JOURNAL = {Int. Math. Res. Not. IMRN},
  FJOURNAL = {International Mathematics Research Notices. IMRN},
      YEAR = {2018},
     PAGES = {5583--5608},
      ISSN = {1073-7928,1687-0247},
   MRCLASS = {32G20 (14D07 14H60 33C90)},
  MRNUMBER = {3862114},
MRREVIEWER = {Javier\ A.\ Fern\'{a}ndez},
       DOI = {10.1093/imrn/rnx044},
       URL = {https://doi.org/10.1093/imrn/rnx044},
}

@article {HypMot,
    AUTHOR = {Roberts, David P. and Rodriguez Villegas, Fernando},
     TITLE = {Hypergeometric Motives},
   JOURNAL = {Notices Amer. Math. Soc.},
  FJOURNAL = {Notices of the American Mathematical Society},
    VOLUME = {69},
      YEAR = {2022},
    NUMBER = {6},
     PAGES = {914--929},
      ISSN = {0002-9920},
   MRCLASS = {01A70 (55-03 57-03)},
  MRNUMBER = {1691561},
}

@book {KatzRigid,
    AUTHOR = {Katz, Nicholas M.},
     TITLE = {Rigid local systems},
    SERIES = {Annals of Mathematics Studies},
    VOLUME = {139},
 PUBLISHER = {Princeton University Press, Princeton, NJ},
      YEAR = {1996},
     PAGES = {viii+223},
      ISBN = {0-691-01118-4},
   MRCLASS = {14F20 (14F05)},
  MRNUMBER = {1366651},
MRREVIEWER = {Abdellah Mokrane},
       DOI = {10.1515/9781400882595},
       URL = {https://doi-org.libezp.lib.lsu.edu/10.1515/9781400882595},
}

@book {KatzESDE,
    AUTHOR = {Katz, Nicholas M.},
     TITLE = {Exponential sums and differential equations},
    SERIES = {Annals of Mathematics Studies},
    VOLUME = {124},
 PUBLISHER = {Princeton University Press, Princeton, NJ},
      YEAR = {1990},
     PAGES = {xii+430},
      ISBN = {0-691-08598-6; 0-691-08599-4},
   MRCLASS = {14D10 (11L03 11T23 14G15)},
  MRNUMBER = {1081536},
MRREVIEWER = {Hernando Enrique Sierra-Morales},
       DOI = {10.1515/9781400882434},
       URL = {https://doi-org.libezp.lib.lsu.edu/10.1515/9781400882434},
}

@misc{LLL26,
     
      author={Wen-Ching Winnie Li and Tong Liu and Ling Long},
       title={The arithmetic of hypergeometric {G}alois representations}, 
      year={preprint 2026},
      eprint={},
      archivePrefix={},
      primaryClass={},
      url={}, 
}

@misc{grove2025hypergeometricmodularityconjecturesdawsey,
 author={Brian Grove},
      title={On {S}ome {H}ypergeometric {M}odularity {C}onjectures of {D}awsey and {M}c{C}arthy}, 
      year={2025, arXiv 2507.19971},
}

@article {lennon1,
    AUTHOR = {Lennon, Catherine},
     TITLE = {Trace formulas for {H}ecke operators, {G}aussian
              hypergeometric functions, and the modularity of a threefold},
   JOURNAL = {J. Number Theory},
  FJOURNAL = {Journal of Number Theory},
    VOLUME = {131},
      YEAR = {2011},
    NUMBER = {12},
     PAGES = {2320--2351},
      ISSN = {0022-314X},
   MRCLASS = {11F25 (11G20 11G40 11T24 33E50)},
  MRNUMBER = {2832827},
MRREVIEWER = {Jenny G. Fuselier},
       DOI = {10.1016/j.jnt.2011.05.005},
       URL = {https://doi-org.libezp.lib.lsu.edu/10.1016/j.jnt.2011.05.005},
}

@article {BBG-Ramanujan,
    AUTHOR = {Berndt, Bruce C. and Bhargava, S. and Garvan, Frank G.},
     TITLE = {Ramanujan's theories of elliptic functions to alternative
              bases},
   JOURNAL = {Trans. Amer. Math. Soc.},
  FJOURNAL = {Transactions of the American Mathematical Society},
    VOLUME = {347},
      YEAR = {1995},
    NUMBER = {11},
     PAGES = {4163--4244},
      ISSN = {0002-9947},
   MRCLASS = {33E05 (11F27 33C05 33D10)},
  MRNUMBER = {1311903},
MRREVIEWER = {J. Borwein},
       DOI = {10.2307/2155035},
       URL = {https://doi-org.libezp.lib.lsu.edu/10.2307/2155035},
}

@article {StillerHF,
    AUTHOR = {Stiller, P. F.},
     TITLE = {Classical automorphic forms and hypergeometric functions},
   JOURNAL = {J. Number Theory},
  FJOURNAL = {Journal of Number Theory},
    VOLUME = {28},
      YEAR = {1988},
    NUMBER = {2},
     PAGES = {219--232},
      ISSN = {0022-314X},
   MRCLASS = {11F12 (11F20 33A30)},
  MRNUMBER = {927661},
MRREVIEWER = {Jannis A. Antoniadis},
       DOI = {10.1016/0022-314X(88)90067-4},
       URL = {https://doi-org.libezp.lib.lsu.edu/10.1016/0022-314X(88)90067-4},
}

@book {AAR,
    AUTHOR = {Andrews, George E. and Askey, Richard and Roy, Ranjan},
     TITLE = {Special functions},
    SERIES = {Encyclopedia of Mathematics and its Applications},
    VOLUME = {71},
 PUBLISHER = {Cambridge University Press, Cambridge},
      YEAR = {1999},
     PAGES = {xvi+664},
      ISBN = {0-521-62321-9; 0-521-78988-5},
   MRCLASS = {33-01 (33-02)},
  MRNUMBER = {1688958},
MRREVIEWER = {Bruce C. Berndt},
       DOI = {10.1017/CBO9781107325937},
       URL = {https://doi.org/10.1017/CBO9781107325937},
}

@article {Ahlgren-Ono-CalabiYau,
    AUTHOR = {Ahlgren, Scott and Ono, Ken},
     TITLE = {Modularity of a certain {C}alabi-{Y}au threefold},
   JOURNAL = {Monatsh. Math.},
  FJOURNAL = {Monatshefte f\"ur Mathematik},
    VOLUME = {129},
      YEAR = {2000},
    NUMBER = {3},
     PAGES = {177--190},
      ISSN = {0026-9255},
     CODEN = {MNMTA2},
   MRCLASS = {11G40 (11F20 11F23 11F80 14J32)},
  MRNUMBER = {1746757 (2001b:11059)},
MRREVIEWER = {Noriko Yui},
       DOI = {10.1007/s006050050069},
       URL = {http://dx.doi.org/10.1007/s006050050069},
}

@article {BCM,
    AUTHOR = {Beukers, Frits and Cohen, Henri and Mellit, Anton},
     TITLE = {Finite hypergeometric functions},
   JOURNAL = {Pure Appl. Math. Q.},
  FJOURNAL = {Pure and Applied Mathematics Quarterly},
    VOLUME = {11},
      YEAR = {2015},
    NUMBER = {4},
     PAGES = {559--589},
      ISSN = {1558-8599},
   MRCLASS = {11T24 (11L05 14G05 14M25 33C20 33C80)},
  MRNUMBER = {3613122},
MRREVIEWER = {Daniel Barsky},
       DOI = {10.4310/PAMQ.2015.v11.n4.a2},
       URL = {http://dx.doi.org/10.4310/PAMQ.2015.v11.n4.a2},
}

@article {Stienstra-Beukers,
    AUTHOR = {Stienstra, Jan and Beukers, Frits},
     TITLE = {On the {P}icard-{F}uchs equation and the formal {B}rauer group
              of certain elliptic {$K3$}-surfaces},
   JOURNAL = {Math. Ann.},
  FJOURNAL = {Mathematische Annalen},
    VOLUME = {271},
      YEAR = {1985},
    NUMBER = {2},
     PAGES = {269--304},
      ISSN = {0025-5831},
   MRCLASS = {14L05 (11G35 14J20 14J28)},
  MRNUMBER = {783555},
MRREVIEWER = {Thomas Zink},
       DOI = {10.1007/BF01455990},
       URL = {https://doi-org.libezp.lib.lsu.edu/10.1007/BF01455990},
}

@book {BB,
    AUTHOR = {Borwein, Jonathan M. and Borwein, Peter B.},
     TITLE = {Pi and the {AGM}},
    SERIES = {Canadian Mathematical Society Series of Monographs and
              Advanced Texts},
    VOLUME = {4},
      NOTE = {A study in analytic number theory and computational complexity,
              Reprint of the 1987 original,
              A Wiley-Interscience Publication},
 PUBLISHER = {John Wiley \& Sons, Inc., New York},
      YEAR = {1998},
     PAGES = {xvi+414},
      ISBN = {0-471-31515-X},
   MRCLASS = {11Y60 (11B65 68Q25)},
  MRNUMBER = {1641658},
}

@article {BBG,
    AUTHOR = {Borwein, Jonathan M. and Borwein, Peter B. and Garvan, Frank. G.},
     TITLE = {Some cubic modular identities of {R}amanujan},
   JOURNAL = {Trans. Amer. Math. Soc.},
  FJOURNAL = {Transactions of the American Mathematical Society},
    VOLUME = {343},
      YEAR = {1994},
    NUMBER = {1},
     PAGES = {35--47},
      ISSN = {0002-9947},
   MRCLASS = {11B65 (11F27 33D10)},
  MRNUMBER = {1243610},
MRREVIEWER = {George E. Andrews},
       DOI = {10.2307/2154520},
       URL = {https://doi-org.libezp.lib.lsu.edu/10.2307/2154520},
}

@article {Dwork,
    AUTHOR = {Dwork, Bernard},
     TITLE = {{$p$}-adic cycles},
   JOURNAL = {Inst. Hautes \'Etudes Sci. Publ. Math.},
  FJOURNAL = {Institut des Hautes \'Etudes Scientifiques. Publications
              Math\'ematiques},
      YEAR = {1969},
     PAGES = {27--115},
      ISSN = {0073-8301},
   MRCLASS = {14G20},
  MRNUMBER = {0294346},
MRREVIEWER = {S. S. Shatz},
       URL = {http://www.numdam.org.libezp.lib.lsu.edu/item?id=PMIHES_1969__37__27_0},
}

@article {Fuselier-McCarthy,
    AUTHOR = {Fuselier, Jenny G. and McCarthy, Dermot},
     TITLE = {Hypergeometric type identities in the {$p$}-adic setting and
              modular forms},
   JOURNAL = {Proc. Amer. Math. Soc.},
  FJOURNAL = {Proceedings of the American Mathematical Society},
    VOLUME = {144},
      YEAR = {2016},
    NUMBER = {4},
     PAGES = {1493--1508},
      ISSN = {0002-9939},
   MRCLASS = {11F33 (11S80 33C20 33E50)},
  MRNUMBER = {3451227},
MRREVIEWER = {Jaban Meher},
       DOI = {10.1090/proc/12837},
       URL = {https://doi.org/10.1090/proc/12837},
}

@article {Win3X,
    AUTHOR = {Fuselier, Jenny and Long, Ling and Ramakrishna, Ravi and
              Swisher, Holly and Tu, Fang-Ting},
     TITLE = {Hypergeometric functions over finite fields},
   JOURNAL = {Mem. Amer. Math. Soc.},
  FJOURNAL = {Memoirs of the American Mathematical Society},
    VOLUME = {280},
      YEAR = {2022},
    NUMBER = {1382},
     PAGES = {},
      ISSN = {0065-9266},
      ISBN = {978-1-4704-5433-3; 978-1-4704-7282-5},
   MRCLASS = {11T23 (11-02 11F80 11S40 11T24 33)},
  MRNUMBER = {4493579},
       DOI = {10.1090/memo/1382},
       URL = {https://doi-org.libezp.lib.lsu.edu/10.1090/memo/1382},
}

@article {fu-li-wan-GKZ-padic,
    AUTHOR = {Fu, Lei and Li, Peigen and Wan, Daqing and Zhang, Hao},
     TITLE = {{$p$}-adic {GKZ} hypergeometric complex},
   JOURNAL = {Math. Ann.},
  FJOURNAL = {Mathematische Annalen},
    VOLUME = {387},
      YEAR = {2023},
    NUMBER = {3-4},
     PAGES = {1629--1689},
      ISSN = {0025-5831,1432-1807},
   MRCLASS = {14F30 (11T23 14G15 33C70)},
  MRNUMBER = {4657433},
MRREVIEWER = {Noriko\ Yui},
       DOI = {10.1007/s00208-022-02491-9},
       URL = {https://doi.org/10.1007/s00208-022-02491-9},
}

@article {Gross,
    AUTHOR = {Gross, Benedict H.},
     TITLE = {On the periods of abelian integrals and a formula of {C}howla
              and {S}elberg},
      NOTE = {With an appendix by David E. Rohrlich},
   JOURNAL = {Invent. Math.},
  FJOURNAL = {Inventiones Mathematicae},
    VOLUME = {45},
      YEAR = {1978},
    NUMBER = {2},
     PAGES = {193--211},
      ISSN = {0020-9910},
   MRCLASS = {14K22 (14K15 33A25)},
  MRNUMBER = {480542},
MRREVIEWER = {Neal Koblitz},
       DOI = {10.1007/BF01390273},
       URL = {https://doi-org.libezp.lib.lsu.edu/10.1007/BF01390273},
}

@incollection {Katz-Dwork,
    AUTHOR = {Katz, Nicholas M.},
     TITLE = {Travaux de {D}work},
 BOOKTITLE = {S\'{e}minaire {B}ourbaki, 24\`eme ann\'{e}e (1971/1972), {E}xp. {N}o.
              409},
     PAGES = {167--200. Lecture Notes in Math., Vol. 317},
      YEAR = {1973},
   MRCLASS = {14G13 (14G20)},
  MRNUMBER = {0498577},
  PUBLISHER = {Springer},
}

@article{HMM1,
      title={The {E}xplicit {H}ypergeometric-{M}odularity {M}ethod {I}}, 
      author={Michael Allen and Brian Grove and Ling Long and Fang-Ting Tu},   
JOURNAL = {Adv. Math.},
  FJOURNAL = {Advances in Mathematics},
    VOLUME = {487, Paper No. 110411},
      YEAR = {2025},
DOI = {10.1016/j.aim.2025.110411},
}

@article{HMM2,
      title={The {E}xplicit {H}ypergeometric-{M}odularity {M}ethod {II}}, 
      author={Michael Allen and Brian Grove and Ling Long and Fang-Ting Tu},
 JOURNAL ={Research in the Mathematical Sciences},
Volume={12, Paper No. 84},
     year={2025},
}

@article {LR,
    AUTHOR = {Long, Ling and Ramakrishna, Ravi},
     TITLE = {Some supercongruences occurring in truncated hypergeometric
              series},
   JOURNAL = {Adv. Math.},
  FJOURNAL = {Advances in Mathematics},
    VOLUME = {290},
      YEAR = {2016},
     PAGES = {773--808},
      ISSN = {0001-8708},
   MRCLASS = {33C20 (33E50)},
  MRNUMBER = {3451938},
MRREVIEWER = {Rupam Barman},
       DOI = {10.1016/j.aim.2015.11.043},
       URL = {https://doi.org/10.1016/j.aim.2015.11.043},
}

@incollection {Long18,
    AUTHOR = {Long, Ling},
     TITLE = {Some numeric hypergeometric supercongruences},
 BOOKTITLE = {Vertex operator algebras, number theory and related topics},
    SERIES = {Contemp. Math.},
    VOLUME = {753},
     PAGES = {139--156},
 PUBLISHER = {Amer. Math. Soc., Providence, RI},
      YEAR = {2020},
   MRCLASS = {11T24 (11A07 33C80)},
  MRNUMBER = {4139242},
       DOI = {10.1090/conm/753/15169},
       URL = {https://doi.org/10.1090/conm/753/15169},
}

@article {LTYZ,
    AUTHOR = {Long, Ling and Tu, Fang-Ting and Yui, Noriko and Zudilin,
              Wadim},
     TITLE = {Supercongruences for rigid hypergeometric {C}alabi-{Y}au
              threefolds},
   JOURNAL = {Adv. Math.},
  FJOURNAL = {Advances in Mathematics},
    VOLUME = {393},
      YEAR = {2021},
     PAGES = {Paper No. 108058, 49},
      ISSN = {0001-8708},
   MRCLASS = {11F33 (11T24 14G10 14J32 14J33 33C20)},
  MRNUMBER = {4330088},
MRREVIEWER = {Hidenori Katsurada},
       DOI = {10.1016/j.aim.2021.108058},
       URL = {https://doi-org.libezp.lib.lsu.edu/10.1016/j.aim.2021.108058},
}

@article {McCarthy-Papanikolas,
    AUTHOR = {McCarthy, Dermot and Papanikolas, Matthew A.},
     TITLE = {A finite field hypergeometric function associated to
              eigenvalues of a {S}iegel eigenform},
   JOURNAL = {Int. J. Number Theory},
  FJOURNAL = {International Journal of Number Theory},
    VOLUME = {11},
      YEAR = {2015},
    NUMBER = {8},
     PAGES = {2431--2450},
      ISSN = {1793-0421},
   MRCLASS = {11F46 (11F11 11G20 11T24 33E50)},
  MRNUMBER = {3420754},
MRREVIEWER = {Charles Helou},
       DOI = {10.1142/S1793042115501134},
       URL = {https://doi.org/10.1142/S1793042115501134},
}

@ARTICLE{LLT,
   author = {{Li}, Wen-Ching Winnie and {Long}, Ling and {Tu}, Fang-Ting},
    title = "{Computing special $L$-values of certain modular forms with complex multiplication}",
  journal = {SIGMA 14 (2018), 090},
archivePrefix = "arXiv",
   eprint = {1803.06072},
 primaryClass = "math.NT",
     year = 2018,
    month = aug,
   adsurl = {http://adsabs.harvard.edu/abs/2018arXiv180306072L}
}

@article {LLT2,
    AUTHOR = {Li, Wen-Ching Winnie and Long, Ling and Tu, Fang-Ting},
     TITLE = {A {W}hipple {$_7F_6$} formula revisited},
   JOURNAL = {Matematica},
  FJOURNAL = {La Matematica},
    VOLUME = {1},
      YEAR = {2022},
    NUMBER = {2},
     PAGES = {480--530},
   MRCLASS = {11F11 (11F67 11F80 33C20)},
  MRNUMBER = {4445932},
       DOI = {10.1007/s44007-021-00015-6},
       URL = {https://doi-org.libezp.lib.lsu.edu/10.1007/s44007-021-00015-6},
}

@book {Cohen-Stromberg,
    AUTHOR = {Cohen, Henri and Str\"{o}mberg, Fredrik},
     TITLE = {Modular forms},
    SERIES = {Graduate Studies in Mathematics},
    VOLUME = {179},
      NOTE = {A classical approach},
 PUBLISHER = {American Mathematical Society, Providence, RI},
      YEAR = {2017},
     PAGES = {xii+700},
      ISBN = {978-0-8218-4947-7},
   MRCLASS = {11-01 (11Fxx)},
  MRNUMBER = {3675870},
MRREVIEWER = {Sander Zwegers},
       DOI = {10.1090/gsm/179},
       URL = {https://doi-org.libezp.lib.lsu.edu/10.1090/gsm/179},
}

@article {McCarthy,
    AUTHOR = {McCarthy, Dermot},
     TITLE = {Transformations of well-poised hypergeometric functions over finite fields},
   JOURNAL = {Finite Fields Appl.},
  FJOURNAL = {Finite Fields and their Applications},
    VOLUME = {18},
      YEAR = {2012},
    NUMBER = {6},
     PAGES = {1133--1147},
      ISSN = {1071-5797},
   MRCLASS = {11T24 (11L99 33C20)},
  MRNUMBER = {3019189},
MRREVIEWER = {Jenny G. Fuselier},
       DOI = {10.1016/j.ffa.2012.08.007},
       URL = {http://dx.doi.org/10.1016/j.ffa.2012.08.007},
}

@incollection {RV-conj,
    AUTHOR = {Rodriguez-Villegas, Fernando},
     TITLE = {Hypergeometric families of {C}alabi-{Y}au manifolds},
 BOOKTITLE = {Calabi-{Y}au varieties and mirror symmetry ({T}oronto, {ON},
              2001)},
    SERIES = {Fields Inst. Commun.},
    VOLUME = {38},
     PAGES = {223--231},
 PUBLISHER = {Amer. Math. Soc., Providence, RI},
      YEAR = {2003},
   MRCLASS = {11G25 (11F33 14J32 33C20)},
  MRNUMBER = {2019156},
MRREVIEWER = {Scott Ahlgren},
}

@article {Yang04,
    AUTHOR = {Yang, Yifan},
     TITLE = {On differential equations satisfied by modular forms},
   JOURNAL = {Math. Z.},
  FJOURNAL = {Mathematische Zeitschrift},
    VOLUME = {246},
      YEAR = {2004},
    NUMBER = {1-2},
     PAGES = {1--19},
      ISSN = {0025-5874},
   MRCLASS = {11F11},
  MRNUMBER = {2031441},
MRREVIEWER = {F. Beukers},
       DOI = {10.1007/s00209-003-0573-4},
       URL = {https://doi-org.libezp.lib.lsu.edu/10.1007/s00209-003-0573-4},
}

@incollection {Zagier-top-diff,
    AUTHOR = {Zagier, Don},
     TITLE = {The arithmetic and topology of differential equations},
 BOOKTITLE = {European {C}ongress of {M}athematics},
     PAGES = {717--776},
 PUBLISHER = {Eur. Math. Soc., Z\"{u}rich},
      YEAR = {2018},
   MRCLASS = {11-02 (11Gxx 14H10 14J10 14J33)},
  MRNUMBER = {3890449},
}

@article {Zudilin-SIGMA,
    AUTHOR = {Zudilin, Wadim},
     TITLE = {A hypergeometric version of the modularity of rigid
              {C}alabi-{Y}au manifolds},
   JOURNAL = {SIGMA Symmetry Integrability Geom. Methods Appl.},
  FJOURNAL = {SIGMA. Symmetry, Integrability and Geometry. Methods and
              Applications},
    VOLUME = {14},
      YEAR = {2018},
     PAGES = {Paper No. 086, 16},
      ISSN = {1815-0659},
   MRCLASS = {11F33 (11T24 14G10 14J32 14J33 33C20)},
  MRNUMBER = {3844465},
MRREVIEWER = {Fang-Ting Tu},
       DOI = {10.3842/SIGMA.2018.086},
       URL = {https://doi.org/10.3842/SIGMA.2018.086},
}

@article {Shimura-special-zeta,
    AUTHOR = {Shimura, Goro},
     TITLE = {The special values of the zeta functions associated with cusp
              forms},
   JOURNAL = {Comm. Pure Appl. Math.},
  FJOURNAL = {Communications on Pure and Applied Mathematics},
    VOLUME = {29},
      YEAR = {1976},
    NUMBER = {6},
     PAGES = {783--804},
      ISSN = {0010-3640,1097-0312},
   MRCLASS = {10D15 (10H10)},
  MRNUMBER = {434962},
MRREVIEWER = {K.-B.\ Gundlach},
       DOI = {10.1002/cpa.3160290618},
       URL = {https://doi.org/10.1002/cpa.3160290618},
}

@incollection {deligne-l-values,
    AUTHOR = {Deligne, P.},
     TITLE = {Valeurs de fonctions {$L$}\ et p\'eriodes d'int\'egrales},
 BOOKTITLE = {Automorphic forms, representations and {$L$}-functions
              ({P}roc. {S}ympos. {P}ure {M}ath., {O}regon {S}tate {U}niv.,
              {C}orvallis, {O}re., 1977), {P}art 2},
    SERIES = {Proc. Sympos. Pure Math.},
    VOLUME = {XXXIII},
     PAGES = {313--346},
      NOTE = {With an appendix by N. Koblitz and A. Ogus},
 PUBLISHER = {Amer. Math. Soc., Providence, RI},
      YEAR = {1979},
      ISBN = {0-8218-1437-0},
   MRCLASS = {12A70 (10D15 10D24 10H10)},
  MRNUMBER = {546622},
MRREVIEWER = {James\ Milne},
}

@article {deligneraghuram,
    AUTHOR = {Deligne, Pierre and Raghuram, A.},
     TITLE = {Motives, periods, and functoriality},
   JOURNAL = {Tunis. J. Math.},
  FJOURNAL = {Tunisian Journal of Mathematics},
    VOLUME = {7},
      YEAR = {2025},
    NUMBER = {1},
     PAGES = {131--165},
      ISSN = {2576-7658,2576-7666},
   MRCLASS = {11F67 (11G09 22E55)},
  MRNUMBER = {4877275},
MRREVIEWER = {Lei\ Yang},
       DOI = {10.2140/tunis.2025.7.131},
       URL = {https://doi.org/10.2140/tunis.2025.7.131},
}

@misc{gugiatti2024hypergeometriclocalsystemsmathbbq,
      title={Hypergeometric local systems over $\mathbb{Q}$ with {H}odge vector $(1,1,1,1)$}, 
      author={Giulia Gugiatti and Fernando Rodriguez Villegas},
      year={2024, arXiv 2401.13529},
}

@misc{Barman_MaityEHMM,
      author={Rupam Barman and Sipra Maity},
      title= {Explicit hypergeometric modularity of certain weight two and four {H}ecke eigenforms}, 
      year={2026, arXiv 2604.02723},
}

@misc{EHMMcalc,
  shorthand    = {EHMM Calculator},
  author       = {Michael Allen and Brian Grove and Ling Long and Esme Rosen and Fang-Ting Tu},
  title        = {The {EHMM} {C}alculator},
  howpublished = {\url{https://sites.google.com/view/esme-rosen/ehmm-calculator}},
  year         = {2026},
  note         = {[Online; accessed 24 July 2026]},
}

@article {rosenK1,
    AUTHOR = {Rosen, Esme},
     TITLE = {{Modular Forms and Certain ${}_2F_1(1)$ Hypergeometric Series}},
    Journal = {Proc. of the Amer. Math. Soc.},
    FJournal = {Proceedings of the American Mathematical Society},
    Volume = {154},
    Year = {2026},
    Number = {7},
    Pages = {2803–2818},
    DOI ={https://doi.org/10.1090/proc/17616}
}

@article {rosen,
    AUTHOR = {Rosen, Esme},
     TITLE = {{$L$}-values of certain weight 3 Modular Forms and Transformations of  Hypergeometric Series},
     Volume = {13},
     Number ={57},
    Journal = {Res Math Sci},
    Fjournal={Research in the Mathematical Sciences},
Publisher = {},
      YEAR = {2026},
      DOI={https://doi.org/10.1007/s40687-026-00639-6},
    
}

@misc{rosenmixedI,
    author = {Rosen,Esme},
    title={Explicit Modularity of Reducible Rank 2 Hypergeometric Motives {I}},
    note = {In preparation}
}

@book{handbook,
  editor    = {Milton Abramowitz and Irene A. Stegun},
  title     = {Handbook of Mathematical Functions with Formulas, Graphs, and Mathematical Tables},
  series    = {National Bureau of Standards Applied Mathematics Series},
  volume    = {55},
  publisher = {U.S. Government Printing Office},
  address   = {Washington, D.C.},
  year      = {1964}
}

@incollection {Ramanujan-pi,
    AUTHOR = {Ramanujan, S.},
     TITLE = {Modular equations and approximations to {$\pi$} [{Q}uart. {J}.
              {M}ath. {\bf 45} (1914), 350--372]},
 BOOKTITLE = {Collected papers of {S}rinivasa {R}amanujan},
     PAGES = {23--39},
 PUBLISHER = {AMS Chelsea Publ., Providence, RI},
      YEAR = {2000},
      ISBN = {0-8218-2076-1},
   MRCLASS = {01A75},
  MRNUMBER = {2280849},
       URL = {},
}

@article{Borweincubic,
 author = {J. M. Borwein and P. B. Borwein},
 journal = {Transactions of the American Mathematical Society},
 number = {2},
 pages = {691--701},
 publisher = {American Mathematical Society},
 title = {A {C}ubic {C}ounterpart of {J}acobi's {I}dentity and the {AGM}},
 volume = {323},
 year = {1991}
}
